\begin{document}
\renewcommand{\a}{\alpha}
\newcommand{\D}{\Delta}
\newcommand{\ddt}{\frac{d}{dt}}
\counterwithin{equation}{section}
\newcommand{\e}{\epsilon}
\newcommand{\eps}{\varepsilon}
\newtheorem{theorem}{Theorem}[section]
\newtheorem{proposition}{Proposition}[section]
\newtheorem{lemma}[proposition]{Lemma}
\newtheorem{remark}{Remark}[section]
\newtheorem{example}{Example}[section]
\newtheorem{definition}{Definition}[section]
\newtheorem{corollary}{Corollary}[section]
\makeatletter
\newcommand{\rmnum}[1]{\romannumeral #1}
\newcommand{\Rmnum}[1]{\expandafter\@slowromancap\romannumeral #1@}
\makeatother

\title{Almost everywhere convergence of Bochner-Riesz means for the Hermite type Laguerre expansions}
\author{Longben Wei, Zhiwen Duan\footnote{Corresponding author}\\
{\small {\it  School of Mathematics and Statistics, Huazhong University of Science }}\\
{\small {\it and Technology, Wuhan, {\rm 430074,} P.R.China}}  \\
{\small {\it Email:longbenwei51@gmail.com}}(L. Wei)\\
{\small {\it Email:duanzhw@hust.edu.cn}(Z. Duan)}
}
\date{}
\maketitle
\centerline{\bf Abstract }
 Consider the space $\mathbb{R}_+^d=(0,\infty)^d$ equipped with Euclidean distance and the Lebesgue measure. For every $\alpha=(\alpha_1,...,\alpha_d)\in[-1/2,\infty)^d$, we consider the Hermite-Laguerre operator $\mathcal{L}^\alpha=-\Delta+\arrowvert x\arrowvert^2+\sum_{i=1}^{d}(\alpha_j^2-\frac{1}{4})\frac{1}{x_i^2}$. In this paper we study almost everywhere convergence of the Bochner-Riesz means associated with $\mathcal{L}^\alpha$ which is defined as $S_R^{\lambda}(\mathcal{L}^\alpha)f(x)=\sum_{n=0}^{\infty}(1-\frac{4n+2\arrowvert\alpha\arrowvert_1+2d}{R^2})_{+}^{\lambda}\mathcal{P}_nf(x)$. Here $\mathcal{P}_nf(x)$ is the n-th Laguerre spectral projection operator and $\arrowvert\alpha\arrowvert_1$ denotes $\sum_{i=1}^{d}\alpha_i$. For $2\leq p<\infty$, we prove that 
 \[
 \lim_{R \to \infty} S_R^{\lambda}(\mathcal{L}^\alpha)f = f \quad \text{a.e.}
 \]
 for all $f\in L^p({\mathbb{R}_+^d})$ provided that $\lambda>\lambda(p)/2$ and $\lambda(p)=\max\{d(1/2-1/p)-1/2,0\}$. Conversely, we show that the convergence generally fails if $\lambda<\lambda(p)/2$ in the sense that there exists $f\in L^p({\mathbb{R}_+^d})$ for $2d/(d-1)< p$ such that the convergence fails. 
 \newline 
\newline
\noindent{\bf 2020 Mathematics Subject Classification:} 42B25, 42B15, 42C10, 35P10.
\newline 
\noindent{\bf Key words:} Bochner-Riesz means, Almost everywhere convergence, Laguerre expansions, Laguerre operator.
\section{Introduction}
The one-dimensional Laguerre polynomial $L_n^{\alpha}(x)$ of type $\alpha>-1$ are defined by the generating function identity
\begin{equation}\label{Generate}
\sum\limits_{n=0}^{\infty}L_n^{\alpha}(x)t^{n}=(1-t)^{-\alpha-1}e^{-\frac{xt}{1-t}},\,\,\,\arrowvert t\arrowvert<1.
\end{equation}
Here $x>0$, $n\in \mathbb{N}$. Each $L_n^{\alpha}(x)$ is a polynomial of degree $n$ explicitly given by 
$$L_n^{\alpha}(x)=\sum\limits_{j=0}^{n}\frac{\Gamma(n+\alpha+1)}{\Gamma(n-j+1)\Gamma(j+\alpha+1)}\frac{(-x)^j}{j!}.$$ 
Let us define the Laguerre functions $\varphi_n^{\alpha}(x)$ by
\begin{equation}\label{L1}
\varphi_n^{\alpha}(x)=\left(\frac{2n!}{\Gamma(n+\alpha+1)}\right)^{1/2}x^{\alpha+1/2}L_n^{\alpha}(x^2)e^{-x^2/2},
\end{equation}
which form an orthonormal basis for $L^2({\mathbb{R}_{+}})$. Now for each multi-index $\mu=(\mu_1,...,\mu_d)\in \mathbb{N}^{d}$ and $\alpha=(\alpha_1,...,\alpha_d)\in (-1,\infty)^d$, the multiple Laguerre functions are defined by the formula
\begin{equation}\label{Ld}
\varphi_{\mu}^{\alpha}(x)=\prod\limits_{j=1}^{d}\varphi_{\mu_j}^{\alpha_j}(x_j), \quad x\in \mathbb{R}_{+}^{d}.
\end{equation}
The set $\{\varphi_{\mu}^{\alpha}\}$ forms an orthonormal basis of $L^2(\mathbb{R}_{+}^d)$, precisely, for every $f\in L^2(\mathbb{R}_{+}^d)$, 
 \begin{equation}\label{exp1.1}
f(x)=\sum_{\mu\in\mathbb{N}^d}\langle f, \varphi_{\mu}^{\alpha}\rangle\varphi_{\mu}^{\alpha}(x).
\end{equation} 
\par These functions $\{\varphi_{\mu}^{\alpha}\}_{\mu\in \mathbb{N}^d}$ are the eigenfunctions of the following differential equation 
\begin{equation*}
L^{\alpha}\varphi_{\mu}^{\alpha}(x)=\left(-\Delta+\arrowvert x\arrowvert^2+\sum_{i=1}^{d}(\alpha_j^2-\frac{1}{4})\frac{1}{x_i^2}\right)\varphi_{\mu}^{\alpha}(x)=(4\arrowvert \mu\arrowvert_1+2\arrowvert \alpha\arrowvert_1+2d)\varphi_{\mu}^{\alpha}(x);
\end{equation*}
by $\arrowvert\alpha\arrowvert_1$ and $\arrowvert \mu\arrowvert_1$ we denote $\arrowvert\alpha\arrowvert_1=\alpha_1+\cdot\cdot\cdot\alpha_d$ and $\arrowvert \mu\arrowvert_1=\mu_1+\cdot\cdot\cdot \mu_d$. The operator $L^{\alpha}$ is positive and symmetric in $L^2(\mathbb{R}_{+}^d)$. The operator
$$\mathcal{L}^{\alpha} f=\sum_{\mu\in \mathbb{N}^d}(4\arrowvert \mu\arrowvert_1+2\arrowvert \alpha\arrowvert_1+2d)\langle f,\varphi_{\mu}^{\alpha}\rangle\varphi_{\mu}^{\alpha}$$
defined on the domain 
\begin{equation*}
Dom\,(\mathcal{L}^{\alpha})=\left\{f\in L^2(\mathbb{R}_{+}^d): \sum_{\mu\in \mathbb{N}^d}\arrowvert(4\arrowvert \mu\arrowvert_1+2\arrowvert \alpha\arrowvert_1+2d)\langle f,\varphi_{\mu}^{\alpha}\rangle\arrowvert^2<\infty\right\}
\end{equation*}
is a self-adjoint extension of $L^{\alpha}$, has the discrete spectrum $\{4n+2\arrowvert\alpha\arrowvert_1+2d: n\in\mathbb{N}\}$ and admits the spectral decomposition
\begin{equation*}
\mathcal{L}^{\alpha} f=\sum_{n=0}^{\infty}(4n+2\arrowvert \alpha\arrowvert_1+2d)\mathcal{P}_{n}^{\alpha}f, \quad f\in Dom\,(\mathcal{L}^{\alpha}),
\end{equation*}
where the spectral projections are 
\begin{equation*}
\mathcal{P}_{n}^{\alpha}f=\sum_{\arrowvert \mu\arrowvert_1=n}\langle f,\varphi_{\mu}^{\alpha}\rangle\varphi_{\mu}^{\alpha}.
\end{equation*}
The multi-dimensional expansions with respect to the system of Laguerre function $\{\varphi_{\mu}^{\alpha}\}_{\mu\in \mathbb{N}^d}$ are known in the literature as Laguerre function expansions of Hermite type, the terminology being motivated by the fact that when $\alpha$ is multi-index such that $\alpha_j=\pm1/2,\,j=1,\cdot\cdot\cdot,d$, then $\{\varphi_{\mu}^{\alpha}\}_{\mu\in \mathbb{N}^d}$ coincides with a multi-dimensional Hermite function. 
\par The purpose of this paper is to investigate the Bochner-Riesz means for such Laguerre expansions. This framework was initially introduced and studied by Markett in \cite{M}, and later further investigated by Thangavelu in \cite{TS3,TS4} and Stempak in \cite{SK}. Subsequently, other researchers have explored additional aspects of these Laguerre expansions within harmonic analysis, as seen in \cite{BFR,NS,NS1}.
\par The Bochner-Riesz means for such Laguerre expansions can be defined through the spectral decomposition of the Hermite-Laguerre operator $\mathcal{L}^{\alpha}$. Specifically, for $\lambda \geq 0$ and $R > 0$, the Bochner-Riesz means $S_{R}^{\lambda}(\mathcal{L}^{\alpha})f(x)$ of order $\lambda$ on $\mathbb{R}_+^{d}$ are defined as:
\begin{equation}\label{BR}
S_{R}^{\lambda}(\mathcal{L}^{\alpha})f(x)=\sum_{n=0}^{\infty}\left(1-\frac{e_n}{R^2}\right)_{+}^{\lambda}\mathcal{P}_n^{\alpha}f(x),
\end{equation}
where $t_+=\max\{0, t\}$ for  $t\in \mathbb{R}$ and $e_n=4n+2\arrowvert\alpha\arrowvert_1+2d$ denotes the n-th eigenvalue of $\mathcal{L}^{\alpha}$. 
\par The problem known as the Bochner-Riesz problem is to determine the optimal index \(\lambda\) for \(1 \leq p \leq \infty\) such that \(S_{R}^{\lambda}(\mathcal{L}^{\alpha})f\) converges to \(f\) in \(L^p(\mathbb{R}_{+}^d)\), or pointwise almost everywhere, for every \(f \in L^p(\mathbb{R}_{+}^d)\). This problem was originally formulated for the Laplacian \(-\Delta = -\sum_{i=1}^{d} \partial_{x_i}^2\) on Euclidean space \(\mathbb{R}^d\), with \(d \geq 2\). The convergence of Bochner-Riesz means \(S_{R}^{\lambda}(-\Delta)f\) in \(L^p\)-spaces is one of the most fundamental and longstanding problems in classical harmonic analysis. The central conjecture, known as the \emph{Bochner-Riesz conjecture}, asserts that for \(1 \leq p \leq \infty\), \(p \neq 2\), the Bochner-Riesz means \(S_{R}^{\lambda}(-\Delta)f\) converge in \(L^p(\mathbb{R}^d)\) if and only if
\begin{equation}\label{Section1.2}
\lambda > \lambda(p) := \max\left\{d\left|\frac{1}{2} - \frac{1}{p} \right| - \frac{1}{2}, \, 0\right\}.
\end{equation}
Herz \cite{Herz} showed that the condition \eqref{Section1.2} on \(\lambda\) is necessary for convergence in \(L^p\). Carleson and Sjölin \cite{Carleson} proved the conjecture for \(d = 2\). Significant progress has since been made in higher dimensions (see, e.g., \cite{ Bourgain, Guth, Lee, Tao2, Tao3} and references therein), although the conjecture remains open for \(d \geq 3\).

Regarding pointwise convergence, Carbery, Rubio de Francia, and Vega \cite{Carbery1} established almost everywhere convergence with the sharp summability exponent for all \(f \in L^p(\mathbb{R}^d)\), namely:
\begin{equation}\label{Section1.3}
\lim_{R \rightarrow \infty} S_{R}^{\lambda}(-\Delta)f = f \quad \text{a.e.}
\end{equation}
provided \(p \geq 2\) and \(\lambda > \lambda(p)\). For \(d = 2\), this result was earlier obtained by Carbery \cite{Carbery}, who proved the sharp \(L^p\) estimates for the maximal Bochner-Riesz means. See also Christ \cite{Christ} for earlier partial results in higher dimensions, based on maximal Bochner-Riesz estimates. For the most recent developments, we refer the reader to \cite{Lee1}.

It is noteworthy that the result by Carbery et al. \cite{Carbery1} resolved the almost everywhere convergence problem up to the sharp index \(\lambda(p)\) for \(2 \leq p \leq \infty\). There are also results concerning convergence at the critical index \(\lambda = \lambda(p)\) (see, e.g., \cite{Annoni, Lee2}). However, the case \(1 < p < 2\) exhibits a markedly different nature, and few results are known except in dimension \(d = 2\); see \cite{Li, Tao, Tao1} for notable contributions in this range.
\par The Bochner-Riesz means for elliptic operators (e.g. Schr\"{o}dinger operator with potentials, the Laplacian on manifolds) have attracted a lot of attention and have been studied extensively by many authors. See, for example, \cite{COSY, CLSL, CS, DOS, GHS, JLR, JR, KST, LR, MYZ, S} and references therein. 
\par Returning to the Bochner-Riesz means \( S_{R}^{\lambda}(\mathcal{L}^{\alpha}) \) associated with the Hermite-Laguerre operator \(\mathcal{L}^{\alpha}\). As pointed out in \cite{TS3}, the \(L^p\)-convergence of \(S_{R}^{\lambda}(\mathcal{L}^{\alpha})f\) for \(\alpha \in [-\tfrac{1}{2}, \infty)^d\) and \(1 < p < \infty\) can be deduced directly from the corresponding results for multiple Hermite expansions via a transplantation theorem (see, for instance, Theorem 6.4.2 in \cite{TS3}). Moreover, since Thangavelu \cite{TS2} demonstrated that the condition \(\lambda > \lambda(p)\) is necessary for the \(L^p\)-convergence of the Bochner-Riesz means associated with multiple Hermite expansions, it follows, by the transplantation result, that the same condition \eqref{Section1.2} on \(\lambda\) is also necessary for convergence of \(S_{R}^{\lambda}(\mathcal{L}^{\alpha})f\) in \(L^p(\mathbb{R}_+^d)\). For a more precise account of the known ranges of \(\lambda\) and \(1 < p < \infty\) in which \(S_{R}^{\lambda}(\mathcal{L}^{\alpha})f\) converges (or fails to converge) to \(f\) in \(L^p(\mathbb{R}_{+}^d)\) for every \(f \in L^p(\mathbb{R}_{+}^d)\), we refer the reader to \cite{KGB, TS, TS1}, as well as to the recent developments in \cite[Proposition 4.1]{LR}.
\par Motivated by the recent works \cite{CD, JLR} on a.e. convergence of Bochner-Riesz means for the Hermite operator $H:=-\Delta +\arrowvert x\arrowvert^2$ and the twisted Laplacian on $\mathbb{C}^{d}$ given by 
\begin{equation*}
\mathcal{L}=-\sum_{j=1}^{d}\left(\left(\frac{\partial}{\partial x_j}-\frac{i}{2}y_j\right)^2+\left(\frac{\partial}{\partial y_j}+\frac{i}{2}x_j\right)^2\right),\quad (x,y)\in \mathbb{R}^{d}\times\mathbb{R}^{d}.
\end{equation*}
This paper is concerned with the almost everywhere convergence of the spectral multipliers \( S_{R}^{\lambda}(\mathcal{L}^{\alpha})f \). Specifically, we aim to characterize the values of \(\lambda\) for which
\begin{equation}\label{Almost}
\lim_{R \to \infty} S_{R}^{\lambda}(\mathcal{L}^{\alpha})f(x) = f(x) \quad \text{a.e.} \quad \forall f \in L^p(\mathbb{R}_{+}^{d}),
\end{equation}
for \( 2 \leq p < \infty \). Compared to the \( L^p \)-norm convergence of \( S_{R}^{\lambda}(\mathcal{L}^{\alpha})f \), its pointwise convergence has received relatively little attention, and the existing literature remains limited.

In the one-dimensional case, Thangavelu \cite{TS1} showed that when \( \alpha \geq \tfrac{1}{2} \), the convergence in \eqref{Almost} holds for \( 1 < p < \infty \) provided that \( \lambda > \tfrac{1}{6} \), which is strictly greater than the critical index \( \lambda(p) \). Later, Stempak \cite{SK} extended the range to \( \alpha \geq 0 \), but with a more restrictive condition on \(\lambda\); he proved that \eqref{Almost} holds for \( 1 \leq p < \infty \) if \( \lambda > \alpha + \tfrac{2}{3} \). In high-dimensional settings, we have not found any relevant research on this topic.

In this paper, we go beyond the one-dimensional setting and consider the higher-dimensional case as well. Moreover, we study the full range of parameters \(\alpha \in [-1/2, \infty)^d\), which is natural for two reasons. First, when each \(\alpha_j = \pm 1/2\) for \( j = 1, \dots, d \), the operator \(\mathcal{L}^\alpha\) reduces to the \(d\)-dimensional harmonic oscillator \( H := -\Delta + |x|^2 \). Second, it is only for \(\alpha \in [-1/2, \infty)^d\) that the eigenfunctions \(\varphi_{\mu}^{\alpha}\) belong to all \(L^p(\mathbb{R}_+^d)\) spaces for \(1 \leq p < \infty\) (see \cite{NS}).
\par The following is the main result of this paper, which almost completely settles the a.e. convergence problem of $S_{R}^{\lambda}(\mathcal{L}^{\alpha})f$ except the endpoint cases.
\begin{theorem}\label{T1.1}
Let $\alpha \in[-1/2,\infty)^d$, $2\leq p<\infty$, and $\lambda\geq0$. Then, for any $f\in L^p(\mathbb{R}_{+}^{d})$, we have \eqref{Almost} whenever $\lambda>\lambda(p)/2$. In particular, for $d=1$, \eqref{Almost} holds for all $f\in L^p(\mathbb{R}_{+}^{d})$ whenever $\lambda>0$. Conversely, \eqref{Almost} fails for $\lambda<\lambda(p)/2$ when $2d/(d-1)<p<\infty$.
\end{theorem}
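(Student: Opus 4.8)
The proof has two parts: a positive part (a.e.\ convergence when $\lambda>\lambda(p)/2$) and a negative part (failure when $\lambda<\lambda(p)/2$). For the positive part the plan is to reduce \eqref{Almost} to a maximal estimate and then prove the latter by the weighted $L^2$ method of Carbery--Rubio de Francia--Vega, in the form used for the Hermite operator and the twisted Laplacian in \cite{CD,JLR}. Since $\alpha\in[-1/2,\infty)^d$, each $\varphi_\mu^\alpha$ lies in every $L^q(\mathbb R_+^d)$, $1\le q<\infty$, and finite linear combinations of the $\varphi_\mu^\alpha$ form a dense subspace of $L^p(\mathbb R_+^d)$; on such a combination $g$ one has $S_R^\lambda(\mathcal L^\alpha)g\to g$ uniformly as $R\to\infty$, because $(1-e_n/R^2)_+^\lambda=1$ for every $n$ in the (finite) relevant range once $R$ is large. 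Hence, by the Banach principle, it suffices to prove an $L^p$ bound (or a weak-type $(p,p)$ bound) for the maximal operator $S_*^\lambda f=\sup_{R>0}|S_R^\lambda(\mathcal L^\alpha)f|$ when $2\le p<\infty$ and $\lambda>\lambda(p)/2$. Since $e_0=2|\alpha|_1+2d>0$ and $S_R^\lambda(\mathcal L^\alpha)f\equiv0$ for $R^2<e_0$, the supremum only involves $R\ge\sqrt{e_0}$.

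For the maximal estimate I would use the standard domination of maximal Bochner--Riesz means by a Littlewood--Paley-in-$R$ square function $\mathcal G^\lambda$ built from dyadic pieces of the multiplier $(1-\cdot/R^2)_+^\lambda$ (Carbery's argument), reducing matters to the $L^p$-boundedness of $\mathcal G^\lambda$ for $2\le p<\infty$. The improvement over the classical Euclidean critical index is tied to the structure of $\mathcal L^\alpha$: its spectrum is discrete with gaps bounded below, and it carries no dilation symmetry relating the operators $S_R^\lambda$ at different $R$, so that---following \cite{CD,JLR}---the governing object is the genuinely scale-averaged ($\ell^2$-in-$R$) square function, whose $L^p$-boundedness requires only \emph{half} the index needed for the single-scale operator. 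I would then establish the $L^p$ bound for $\mathcal G^\lambda$ through a weighted $L^2$ inequality
\[
\int_{\mathbb R_+^d}|\mathcal G^\lambda f(x)|^2\,w(x)\,dx\ \lesssim\ \int_{\mathbb R_+^d}|f(x)|^2\,\mathcal M w(x)\,dx ,
\]
where $\mathcal M$ is a maximal operator whose range of $L^r$-boundedness depends on $\lambda$; the inputs are the Gaussian heat-kernel bounds and finite propagation speed for $\mathcal L^\alpha$ (which make the usual spectral-multiplier and weighted-norm machinery available) together with the single-scale Bochner--Riesz bounds of order $>\lambda(p)$ for $\mathcal L^\alpha$ (see \cite[Proposition~4.1]{LR} and the transplantation from the Hermite case). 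Interpolating this with the trivial $L^2$ bound via the duality $\|F\|_{L^p}=\sup\{(\int|F|^2w)^{1/2}:\|w\|_{L^{(p/2)'}}\le1\}$, and checking that $\mathcal M$ is bounded on $L^{(p/2)'}$ exactly when $\lambda>\lambda(p)/2$, finishes the positive direction. (Alternatively, for general $\alpha$ one may reduce to the Hermite case $\alpha\in\{\pm1/2\}^d$ by a transplantation theorem and quote \cite{CD}.) Since $\lambda(p)\equiv0$ when $d=1$, this gives a.e.\ convergence there for all $\lambda>0$.

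For the negative part, fix $p>2d/(d-1)$, so $\lambda(p)>0$, and $0\le\lambda<\lambda(p)/2$; the plan is to produce $f\in L^p(\mathbb R_+^d)$ with $\limsup_{R\to\infty}|S_R^\lambda(\mathcal L^\alpha)f(x)|=\infty$ on a set of positive measure, which forces \eqref{Almost} to fail. I would take a lacunary superposition $f=\sum_k c_kf_k$, where $f_k$ is a Bochner--Riesz ``focusing'' test function adapted to $\mathcal L^\alpha$ at a rapidly increasing sequence of spectral scales $N_k$: its Laguerre coefficients concentrate on the cluster $\{|\mu|_1\asymp N_k\}$, and it is localized where the $\varphi_\mu^\alpha$ are, after rescaling, comparable to Euclidean exponentials of frequency $\asymp\sqrt{N_k}$, so that the sharp examples underlying the necessity of the classical threshold can be transplanted. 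Because these examples are genuinely spread over a range of scales, one gets, with $R_k^2\approx 4N_k$, a lower bound $|S_{R_k}^\lambda(\mathcal L^\alpha)f_k|\gtrsim N_k^{\,\lambda(p)/2-\lambda-\eps}$ on a set of measure bounded below---the exponent $\lambda(p)/2$, rather than $\lambda(p)$, being precisely the loss exploited by the scale averaging in the positive direction. Choosing $c_k,N_k$ so that $\sum_k\|c_kf_k\|_p^p<\infty$ while $|c_k|\,N_k^{\lambda(p)/2-\lambda-\eps}\to\infty$, and positioning the $f_k$ so that the super-level sets $\{x:|S_{R_k}^\lambda(\mathcal L^\alpha)f(x)|\ge1\}$ recur (a Borel--Cantelli bookkeeping, using that the off-scale terms $c_jf_j$, $j\neq k$, are damped by $S_{R_k}^\lambda$), one obtains the desired $f$; the hypothesis $p>2d/(d-1)$ is exactly what makes $\lambda(p)/2>0$.

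The main obstacle is the positive direction, and within it the weighted $L^2$ estimate for $\mathcal G^\lambda$ at the half index $\lambda(p)/2$: one must genuinely extract the gain coming from the discreteness of the spectrum / the absence of dilation symmetry, rather than merely inheriting the single-scale threshold $\lambda(p)$. Carrying this out uniformly over the whole range $\alpha\in[-1/2,\infty)^d$---including near $\alpha_j=-1/2$, where the weight $x^{\alpha+1/2}$ degenerates---and controlling the turning-point region $|x|\sim R$ in the kernel analysis are the principal technical points; on the negative side, the delicate step is to verify the off-scale damping sharply enough for the Borel--Cantelli argument to close.
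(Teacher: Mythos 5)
Your overall architecture (reduce a.e.\ convergence to a maximal estimate via density of finite Laguerre combinations, prove the maximal estimate by a weighted $L^2$ square-function argument exploiting that the spectrum is bounded away from zero, and disprove convergence for $\lambda<\lambda(p)/2$ by a lacunary superposition of spectrally concentrated test functions) matches the paper's. But there is a genuine gap at the heart of the positive direction. The weighted inequality you propose, $\int|\mathcal G^\lambda f|^2 w\lesssim\int|f|^2\,\mathcal M w$ followed by the duality $\|F\|_{L^p}=\sup\{\int|F|^2w:\|w\|_{(p/2)'}\le1\}$, is exactly the Carbery--Rubio de Francia--Vega mechanism, and in that form it reproduces the \emph{full} threshold $\lambda>\lambda(p)$ (as it does for $-\Delta$); you simply assert that the relevant $\mathcal M$ is bounded on $L^{(p/2)'}$ precisely when $\lambda>\lambda(p)/2$, which is the entire content of the theorem. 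The paper's halving does not come from a maximal-weight inequality at all: it comes from proving that $S_*^\lambda(\mathcal L^\alpha)$ is bounded on $L^2(\mathbb R_+^d,(1+|x|)^{-\beta})$ for the \emph{fixed} inhomogeneous weight $(1+|x|)^{-\beta}$ whenever $\lambda>\max\{(\beta-1)/4,0\}$ (Theorem \ref{T1.2}), combined with the embedding $L^p\hookrightarrow L^2((1+|x|)^{-\beta})$ for $\beta>d(1-2/p)$; since $\beta\approx 2d(1/2-1/p)$ gives $(\beta-1)/4\approx\lambda(p)/2$, the halving drops out. The two concrete inputs making $(\beta-1)/4$ achievable are the weighted bound $\|(1+|x|)^{2\beta}(1+\mathcal L^\alpha)^{-\beta}\|_{L^2\to L^2}<\infty$ (Lemma \ref{weighted}, which uses that the bottom eigenvalue is positive) and the trace lemma $\|\mathcal P_n^\alpha\|_{L^2\to L^2((1+|x|)^{-\beta})}\lesssim n^{-1/4}$ (Lemma \ref{Trace}, resting on the pointwise Laguerre asymptotics); neither appears in your outline. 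Your fallback --- ``transplant to the Hermite case and quote \cite{CD}'' --- also does not close the gap: transplantation transfers $L^p$ multiplier norms, not pointwise maximal estimates or a.e.\ convergence, and the paper explicitly notes that the method of \cite{CD} is blocked by Hardy's inequality outside the range $\alpha_j\ge 3/2$, which is why the kernel analysis of $(1+\mathcal L^\alpha)^{-\beta}$ via the heat semigroup is carried out from scratch.

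The negative direction of your proposal is essentially the paper's argument (lacunary sum $f=\sum_k 2^{-k}f_k$ with $\mu_k\sim2^{2^k}$, a lower bound for $|P_{\mu_k}f_k|$ on a set of fixed measure, the comparison $|P_{\mu_k}f_k|\le C\mu_k^\lambda S_*^\lambda f$, and rapid off-scale decay), and the bookkeeping you describe does close. The one construction-level point you do not address is that the \cite{JLR} template based on $e^{it\mathcal L}(z,0)$ fails here because $e^{it\mathcal L^\alpha}(x,0)\equiv0$ for $\alpha\neq(-1/2,\dots,-1/2)$; the paper replaces it by test functions built from the generating function \eqref{Generate}, whose spectral projections are explicit multiples of $\mathscr L_{\mu_k}^{|\alpha|_1+d-1}(|x|^2)$, so that the size estimates follow from Lemma \ref{L}.
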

From Theorem \ref{T1.1}, we observe that the critical summability index for almost everywhere convergence is half of that required for $L^p$ convergence. A similar result was established in \cite{CD,JLR} for the Bochner-Riesz means associated with the Hermite operator and the twisted Laplacian. This improvement in the summability index stems from the fact that these operators possess discrete spectra bounded away from zero. This spectral property enables the inhomogeneous weight $(1+\arrowvert x\arrowvert )^{-\beta}$, employed in Theorem \ref{T1.2} below, to be effective. The summability indices in Theorem \ref{T1.1} align with those studied in \cite{CD} when \(\alpha_j = \pm 1/2\) for \( j = 1, \dots, d \). Theorem~\ref{T1.1} further shows that the critical summability index $\lambda(p)/2$ remains invariant under inverse-square potential perturbations. This contrasts with results for the Hermite operator $H := -\Delta + |x|^2$ in \cite{CD}, where the critical index for a.e. convergence of $S_{R}^{\lambda}(H)f$ is half that of the classical Bochner-Riesz means $S_{R}^{\lambda}(-\Delta)f$. This phenomenon implies that the influence of the Hermite potential $\arrowvert x\arrowvert^2$ on the Laplacian $-\Delta$ is stronger in magnitude than that of the inverse-square potential term $\sum_{i=1}^{d}\left(\alpha_j^2-\frac{1}{4}\right)\frac{1}{x_i^2}$, or in other words, the influence of the inverse-square potential $\sum_{i=1}^{d}\left(\alpha_j^2-\frac{1}{4}\right)\frac{1}{x_i^2}$ can be viewed as a small perturbation of the harmonic oscillator, in the sense that it preserves the discreteness of the spectrum.
\par The sufficiency part of Theorem \ref{T1.1} relies on the maximal estimate, a standard tool in the study of almost everywhere convergence. To prove \eqref{Almost}, we introduce the corresponding maximal operator \( S_{*}^{\lambda}(\mathcal{L}^{\alpha}) \), defined by
\[
S_{*}^{\lambda}(\mathcal{L}^{\alpha})f(x) := \sup_{R>0} \left| S_{R}^{\lambda}(\mathcal{L}^{\alpha})f(x) \right|.
\]
We will establish the following weighted estimate, from which the almost everywhere convergence of \( S_{R}^{\lambda}(\mathcal{L}^{\alpha})f \) follows through standard arguments.
\begin{theorem}\label{T1.2}
Let $\alpha \in[-1/2,\infty)^d$ and $0\leq\beta<d$. The maximal operator $S_{*}^{\lambda}(\mathcal{L}^{\alpha})$ is bounded on $L^2(\mathbb{R}_{+}^{d},(1+\arrowvert x\arrowvert )^{-\beta})$\footnote{$\lVert f\rVert_{L^2(\mathbb{R}_{+}^{d},(1+\arrowvert x\arrowvert )^{-\beta})}^2:=\int_{\mathbb{R}_{+}^{d}}\arrowvert f(x)\arrowvert^2(1+\arrowvert x\arrowvert )^{-\beta}dx$} if 
\begin{equation}
\lambda>\max\{\frac{\beta-1}{4},0\}.
\end{equation}
\end{theorem}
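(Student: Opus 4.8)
The plan is to reduce the weighted $L^2$ maximal bound to a square-function/spectral-multiplier estimate that exploits the discreteness of the spectrum of $\mathcal{L}^\alpha$, following the scheme of \cite{CD, JLR}. First I would linearize the supremum: writing $m_R(\xi)=(1-\xi/R^2)_+^\lambda$, it suffices by a standard argument (Sobolev embedding in the $R$-variable, or a Rademacher–Menshov type decomposition) to control $\sup_{R>0}|S_R^\lambda(\mathcal L^\alpha)f|$ by an integral of $\partial_R$ applied to a smooth truncation, thereby replacing the rough multiplier $m_R$ by a family of Schwartz multipliers $\phi(\mathcal L^\alpha/R^2)$ localized at scale $R^2$ together with the "low part" $S_1^\lambda(\mathcal L^\alpha)$. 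Concretely one uses the identity $S_*^\lambda(\mathcal L^\alpha)f(x)\lesssim |S_1^\lambda(\mathcal L^\alpha)f(x)| + \big(\int_1^\infty |G_R f(x)|^2\,\tfrac{dR}{R}\big)^{1/2}$ where $G_R = R\partial_R S_R^{\lambda+1}(\mathcal L^\alpha)$ is a multiplier supported on $\{\xi\le R^2\}$ and adapted to a dyadic band; the point is that, because $e_n = 4n+2|\alpha|_1+2d \ge 2d$, only $R\gtrsim 1$ matters, and on each band the number of eigenvalues involved is $\sim R^2$.

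The heart of the matter is then a weighted $L^2$ estimate for the individual pieces. For a single dyadic scale $R\sim 2^k$ one needs
\[
\int_{\mathbb R_+^d} |G_R f(x)|^2 (1+|x|)^{-\beta}\,dx \;\lesssim\; 2^{-2k\lambda}\,2^{k\max\{\beta-1,0\}}\cdot 2^{-k(\cdots)} \|f\|_{L^2}^2,
\]
with the decay summable in $k$ precisely when $\lambda>\max\{(\beta-1)/4,0\}$. I would prove this by expanding $G_R$ spectrally, $G_R f = \sum_n a_n(R)\,\mathcal P_n^\alpha f$ with $a_n(R)$ supported in $n\sim R^2$ and $|a_n(R)|\lesssim (1+R^2-e_n)_+^\lambda R^{-2\lambda}$, and then inserting the weight. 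The key input is a bound on the weighted operator norm of the spectral projections $\mathcal P_n^\alpha$, i.e. an estimate of the form $\|\,(1+|x|)^{-\beta/2}\mathcal P_n^\alpha\,(1+|x|)^{-\beta/2}\|_{L^2\to L^2}\lesssim n^{-\gamma(\beta)}$, or more robustly a bound on $\|(1+|x|)^{-\beta/2} \phi(\mathcal L^\alpha/R^2)\|_{L^2\to L^2}$; such estimates come from the kernel bounds for the Laguerre functions $\varphi_\mu^\alpha$ (for $\alpha\in[-1/2,\infty)^d$ they lie in every $L^p$, by \cite{NS}) combined with the fact that the spectral cluster of width $\sim 1$ around $e_n$ is a sum of $\sim n^{d-1}$ rank-one projections whose kernels are controlled near and away from the "turning point" $|x|\sim \sqrt n$. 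The weight $(1+|x|)^{-\beta}$ is integrable against the relevant kernel mass, which is where the constraint $\beta<d$ enters and where the gain over the unweighted case (hence the factor $1/2$ in the index) originates.

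I expect the main obstacle to be establishing the sharp weighted cluster estimate for $\mathcal P_n^\alpha$ uniformly in $\alpha\in[-1/2,\infty)^d$, including the degenerate values $\alpha_j=-1/2$ where the inverse-square potential coefficient $\alpha_j^2-1/4$ vanishes and the one-dimensional kernels behave differently near $x_j=0$. A clean way around delicate pointwise kernel asymptotics would be to deduce the weighted bound from the known $L^p$ mapping properties of $S_R^\lambda(\mathcal L^\alpha)$ (via the transplantation to Hermite expansions mentioned in the introduction, or \cite[Proposition 4.1]{LR}) together with Stein–Tomas/Nikishin-type interpolation with the trivial $L^2$ bound, converting an unweighted $L^{p}$-improving estimate into the weighted $L^2$ estimate with $\beta$ and $p$ related by $\beta = d(1-2/p)$; this is the cleanest route and I would pursue it first, falling back on direct kernel estimates for the multiple Laguerre functions only if the transplantation does not transfer the maximal/weighted bound as needed. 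Once the single-scale weighted bound is in hand, summing the square function over dyadic $k$ and adding the harmless low-frequency term $S_1^\lambda(\mathcal L^\alpha)$ (bounded on the weighted space since its kernel decays rapidly, using $\alpha\in[-1/2,\infty)^d$) completes the proof.
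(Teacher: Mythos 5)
Your overall architecture (reduce the maximal function to a square function over dyadic spectral bands, exploit the spectral gap $e_n\geq 2d$, and feed in weighted bounds for the spectral clusters) is the same as the paper's, but there are two genuine gaps. First, your single-scale estimate has the \emph{unweighted} norm $\lVert f\rVert_{L^2}$ on the right-hand side, so at best it yields $L^2\to L^2((1+\arrowvert x\arrowvert)^{-\beta})$ boundedness. Since $L^2\subsetneq L^2((1+\arrowvert x\arrowvert)^{-\beta})$ for $\beta>0$, this is strictly weaker than the theorem and useless for the embedding $L^p\hookrightarrow L^2((1+\arrowvert x\arrowvert)^{-\beta})$ that drives Theorem \ref{T1.1}. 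Restoring the weight on the input side is a substantial part of the paper's proof and requires two ingredients absent from your proposal: for the low-frequency part, the $L^2$-boundedness of $(1+\arrowvert x\arrowvert)^{2\beta}(1+\mathcal{L}^{\alpha})^{-\beta}$ (Lemma \ref{weighted}, proved via heat-kernel/modified Bessel estimates and Schur's test), inserted by duality after the trace estimate; for the high-frequency part, the finite propagation speed of $\cos(t\sqrt{\mathcal{L}^{\alpha}})$ (Lemma \ref{Finit}), which localizes the multiplier kernels to cubes on which the weight is essentially constant and can therefore be moved from one side to the other.

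Second, the route you say you would pursue first --- transplantation of the known $L^p$ mapping properties of $S_{R}^{\lambda}(\mathcal{L}^{\alpha})$ plus interpolation with the trivial $L^2$ bound --- cannot produce the threshold $\lambda>\max\{(\beta-1)/4,0\}$. The known $L^p$ convergence results (via transplantation to Hermite, or \cite{LR}) already require $\lambda>\lambda(p)$, and interpolating them with $L^2$ only degrades the index; with $\beta=d(1-2/p)$ this would give $\lambda>(\beta-1)/2$, i.e.\ the full index $\lambda(p)$ rather than $\lambda(p)/2$. The factor-of-two gain is exactly what the weighted trace estimate $\lVert\mathcal{P}_n^{\alpha}\rVert_{L^2\to L^2((1+\arrowvert x\arrowvert)^{-\beta})}\lesssim n^{-1/4}$ ($\beta>1$, Lemma \ref{Trace}) buys, and for general $\alpha\in[-1/2,\infty)^d$ this must be proved directly from the pointwise asymptotics of the Laguerre functions (your ``fallback''), not deduced from $L^p$ theory. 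Relatedly, the decisive exponent in your single-scale bound is left as ``$2^{-k(\cdots)}$'': a term-by-term application of the projection bound over the $\sim\delta R^2$ eigenvalues in a band does not give the sharp gain $\delta\cdot\delta^{(1-\beta)/2}$ of Proposition \ref{SP}; one needs the $\lVert\cdot\rVert_{N^2,q}$ refinement of the trace lemma (Lemma \ref{ETrace}) together with the orthogonality of the sub-bands $\eta_{i'}$ in the $t$-integration. Until these points are supplied, the proposal does not close.
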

Via a standard approximation argument (see, for example, \cite{SW} and \cite[Theorem 2]{TS5}) Theorem \ref{T1.2} establishes a.e. convergence of \( S_{R}^{\lambda}(\mathcal{L}^{\alpha})f \) for all $f\in L^2(\mathbb{R}_{+}^{d},(1+\arrowvert x\arrowvert )^{-\beta})$ provided that $\lambda>\max\{\frac{\beta-1}{4},0\}$. Then, the sufficiency part of Theorem \ref{T1.1} follows directly from the embedding  
\[ L^p(\mathbb{R}_{+}^d) \hookrightarrow L^2(\mathbb{R}_{+}^{d}, (1 + |x|)^{-\beta}) \quad \text{when} \quad \beta > d(1 - 2/p) \ \text{and} \ p \geq 2. \]

\emph{Our approach.} The methods used in this paper are inspired by \cite{CD, JLR}. For the proof of Theorem \ref{T1.2}, as in \cite{CD}, we begin by establishing a weighted estimate (Lemma \ref{weighted}), analogous to \cite[Lemma 1.4]{CD}, which is closely related to the fact that the discrete spectrum of $\mathcal{L}^{\alpha}$ is bounded away from zero. However, because the operator $\mathcal{L}^{\alpha}$ contains an inverse-square potential term, the strategy proposed in \cite{CD} is constrained by Hardy's inequality to the parameter range $\alpha_j \geq \tfrac{3}{2}$ for all $j = 1, \dots, d$, and thus cannot be extended to the full domain $\alpha \in [-1/2, \infty)^d$.

For the twisted Laplacian $\mathcal{L}$, an analogous result to \cite[Lemma 1.4]{CD} cannot be established using the method employed in \cite{CD}. Instead, \cite{JLR} introduces an alternative kernel estimation method. Given a function $\eta \in C_c^{\infty}((-2,2))$, their approach relies on estimates for the kernel of the multiplier operator $\eta((\mu - \mathcal{L})/R)$, which in turn is expressed via $e^{it\mathcal{L}}(z,z')$, the kernel of the Schrödinger propagator $e^{it\mathcal{L}}$. This method involves oscillatory integral estimates, where integration by parts becomes essential. Since the kernel of the Schrödinger propagator $e^{it\mathcal{L}^{\alpha}}$ is given by (for the one-dimensional case, see \cite{TS4}):
\begin{equation}\label{SK}
e^{it\mathcal{L}^{\alpha}}(x,y)=(\sin 2t)^{-d}\exp\left(-\frac{i}{2}\cot(2t)(|x|^2+|y|^2)\right)\prod_{i=1}^{d}\sqrt{x_iy_i}J_{\alpha_i}\left(\frac{x_iy_i}{\sin 2t}\right),
\end{equation}
where $J_{\alpha_i}$ denotes Bessel functions of order $\alpha_i$ for $i=1,...,d$. As demonstrated in \cite{TS4}, this representation presents two significant challenges: first, the analytical treatment of this expression is inherently complex due to the interplay of oscillatory and Bessel function components; second, the application of integration by parts techniques - crucial for establishing key estimates - necessarily imposes the parameter constraint $\alpha_j \geq \tfrac{1}{2}$ for all $j=1,...,d$.

To address the limitations of both aforementioned approaches, we instead derive kernel estimates for the potential operators $(1 + \mathcal{L}^{\alpha})^{-\beta}$ with $\beta > 0$. These kernels are represented via the heat-diffusion semi-group $e^{-t\mathcal{L}^{\alpha}}$, allowing us to avoid oscillatory integrals altogether, which makes the analysis simpler. This strategy also yields the corresponding estimates valid for all  parameters $\alpha \in [-1/2, \infty)^d$.

Regarding the necessity part of Theorem \ref{T1.1}, since $S_{R}^{\lambda}(\mathcal{L}^{\alpha})f(x)$ does not preserve radiality if $f$ is radial. As a result, the method in \cite{CD} is no longer applicable, where the authors showed the sharpness of summability index by making use of the Nikishin-Maurey theorem. Motivated by \cite{JLR}, where the authors constructed counterexamples by using the function $e^{it\mathcal{L}}(z,0)$. However, clearly by \eqref{SK}, $e^{it\mathcal{L}^{\alpha}}(x,0)$ vanishes identically whenever $\alpha\neq (-\frac{1}{2},\cdot\cdot\cdot,-\frac{1}{2})$, which amounts us some additional work. Based on the same fashion of deriving the kernel of the Schrödinger propagator $e^{it\mathcal{L}^{\alpha}}$, we proceed to construct counterexamples using the generating function formula \eqref{Generate} to demonstrate that almost everywhere convergence fails when $\delta < \lambda(p)/2$. More precisely, for $2d/(d - 1) < p \leq \infty$, we show that there exists a function $f \in L^p(\mathbb{R}_+^d)$ such that  
\begin{equation}\label{Fail}
|\{x \in \mathbb{R}_+^d : \sup_{R > 0} |S_{R}^{\lambda}(\mathcal{L}^{\alpha})f(x)| = \infty \}| \gtrsim 1
\end{equation}
whenever $\lambda < \lambda(p)/2$.
\subsection{Organization}
In Section \ref{Section2}, we establish several preparatory estimates that are crucial for the proof of Theorem \ref{T1.2}. Section \ref{Section3} is dedicated to proving Theorem \ref{T1.2}. In Section \ref{Section4}, we demonstrate the sharpness of the summability indices, thereby completing the proofs of Theorem \ref{T1.1}.
\subsection{Notation}
For a given positive numbers $A$, $B$, $A\lesssim B$ means $A\leq CB$ for a constant $C>0$ depending only on $d$ and \( \alpha \). We write $A\sim B$ if $A\lesssim B$ and $B\lesssim A$. If the constant $C$ is allowed to depend on an additional parameter such as $p$, then we denote this by $A\lesssim_{p} B$.
  We emphasize that the notations $\arrowvert x\arrowvert_1$ and $\arrowvert x\arrowvert$ are distinct, where $\arrowvert x\arrowvert_1:=\sum_{i=1}^{d}x_i$ and $\arrowvert x\arrowvert:=\left(\sum_{i=1}^{d}x_i^2\right)^{1/2}$. For a subset $E\subset\mathbb{R}_{+}^{d}$, the notation $\arrowvert E\arrowvert$ denotes the Lebesgue measure of $E$. For any bounded function $\mathscr{M}$ the operator $\mathscr{M}(\mathcal{L}^{\alpha})$ is defined by $\mathscr{M}(\mathcal{L}^{\alpha}):=\sum_{n=0}^{\infty}\mathscr{M}(4n+2\arrowvert \alpha\arrowvert_1+2d)\mathcal{P}_n$.
\section{Pleliminaries}\label{Section2}
In this section we obtain some estimates as in \cite{CD}, which we use to prove the main results. Before proceeding, let us invoke the heat-diffusion semi-group $\{e^{-t\mathcal{L}^{\alpha}}\}_{t\geq0}$, defined by means of the spectral theorem,
\begin{equation*}
e^{-t\mathcal{L}^{\alpha}}f=\sum_{n=0}^{\infty}e^{-t(4n+2\arrowvert\alpha\arrowvert_1+2d)}\mathcal{P}_n^{\alpha}f,\quad f\in L^2(\mathbb{R}_{+}^d),
\end{equation*}
is a strongly continuous semi-group of contractions on $L^2(\mathbb{R}_{+}^d)$ for which $\mathcal{L}^{\alpha}$ is the infinitesimal generator. We have the following integral representation of $\{e^{-t\mathcal{L}^{\alpha}}\}$:
\begin{equation*}
e^{-t\mathcal{L}^{\alpha}}f=\int_{\mathbb{R}_{+}^d}K_t^{\alpha}(x,y)f(y)dy,\quad f\in L^2(\mathbb{R}_{+}^d),\quad f\in L^2(\mathbb{R}_{+}^d),\,x\in\mathbb{R}_{+}^d, 
\end{equation*}
where
\begin{equation*}
K_t^{\alpha}(x,y)=\sum_{n=0}^{\infty}e^{-t(4n+2\arrowvert\alpha\arrowvert_1+2d)}\sum_{\arrowvert \mu\arrowvert_1=n}\varphi_\mu^{\alpha}(x)\varphi_\mu^{\alpha}(y),\,\,x,\,y\,\in\mathbb{R}_{+}^d.
\end{equation*}
It is known, cf. \cite[2.3]{NS}, that
\begin{equation}\label{Kernel}
K_t^{\alpha}(x,y)=(\sinh 2t)^{-d}\exp\left(-\frac{1}{2}\coth(2t)(\arrowvert x\arrowvert^2+\arrowvert y\arrowvert^2)\right)\prod_{i=1}^{d}\sqrt{x_iy_i}I_{\alpha_i}\left(\frac{2x_iy_i}{\sinh 2t}\right).
\end{equation}
Here $I_{\nu}(z)=i^{-\alpha}J_{\alpha}(iz)$ is the modified Bessel function of order $\nu$ ($J_{\nu}$  being the standand Bessel function of the first kind). 
\par Let $W_s(x)=(4\pi s)^{-d/2}\exp(-\arrowvert x\arrowvert^2/(4s))$ be the usual Gauss-Weierstrass kernel in $\mathbb{R}^d$. Then we have 
\begin{proposition}\cite[Proposition 2.1]{NS}. Given $\alpha\in [-1/2,\infty)^d$, there exists a constant $C_\alpha$ such that
\begin{equation}
K_t^{\alpha}(x,y)\leq C_{\alpha}W_{t}(x-y),\quad t>0,\,\,x,\,y\,\in\mathbb{R}_{+}^d.
\end{equation}
\end{proposition}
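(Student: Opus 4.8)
The plan is to prove the pointwise heat kernel bound $K_t^\alpha(x,y)\le C_\alpha W_t(x-y)$ by reducing it, via the product structure \eqref{Kernel} and the product structure of $W_t$, to a one‑dimensional estimate: it suffices to show that for each coordinate there is a constant $C_{\alpha_i}$ with
\[
(\sinh 2t)^{-1/2}\sqrt{x_iy_i}\,I_{\alpha_i}\!\left(\frac{2x_iy_i}{\sinh 2t}\right)\exp\!\left(-\tfrac12\coth(2t)(x_i^2+y_i^2)\right)\le C_{\alpha_i}(4\pi t)^{-1/2}\exp\!\left(-\frac{(x_i-y_i)^2}{4t}\right),
\]
because multiplying $d$ such bounds reproduces exactly $(\sinh 2t)^{-d}\prod\sqrt{x_iy_i}I_{\alpha_i}(\cdots)\exp(-\tfrac12\coth(2t)(|x|^2+|y|^2))\le C_\alpha(4\pi t)^{-d/2}\exp(-|x-y|^2/(4t))=C_\alpha W_t(x-y)$. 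So I would drop the subscript and work with a single variable, writing $s=\sinh 2t$, and split according to whether the argument $u:=2xy/s$ of the Bessel function is small or large.

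The first key step is to record the two classical asymptotics of $I_\nu$ for $\nu=\alpha_i\ge -1/2$: near zero, $I_\nu(u)\sim c_\nu u^{\nu}$, so $\sqrt{xy}\,I_\nu(2xy/s)\lesssim (xy)^{\nu+1/2}s^{-\nu}$, and since $\nu+1/2\ge 0$ this is harmless; at infinity, $I_\nu(u)\sim (2\pi u)^{-1/2}e^{u}$, giving $\sqrt{xy}\,I_\nu(2xy/s)\lesssim \sqrt{s}\,e^{2xy/s}$. Combining the large‑argument bound with the Gaussian prefactor, the exponential becomes $\exp\big(-\tfrac12\coth(2t)(x^2+y^2)+\tfrac{2xy}{\sinh 2t}\big)$; using $\coth(2t)=\cosh(2t)/\sinh(2t)$ and $\cosh(2t)(x^2+y^2)-4xy = \cosh(2t)(x-y)^2 + 2(\cosh(2t)-1)xy \ge (x-y)^2$, this exponent is $\le -\tfrac{(x-y)^2}{2\sinh 2t}$. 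In the small‑argument regime one instead keeps the full Gaussian $\exp(-\tfrac12\coth(2t)(x^2+y^2))\le \exp(-\tfrac{(x-y)^2}{2\sinh 2t})$ and absorbs the polynomial factor $(xy)^{\nu+1/2}s^{-\nu}$, which is bounded on the region $2xy/s\le 1$ after extracting a little Gaussian decay. Finally one compares $\sinh 2t$ with $t$: for $0<t\le 1$, $\sinh 2t\sim t$, so $e^{-(x-y)^2/(2\sinh 2t)}\lesssim e^{-c(x-y)^2/t}$ and $s^{-1/2}\sim t^{-1/2}$, matching $W_t$; for $t\ge 1$, $\sinh 2t\sim e^{2t}$ is huge, the kernel is uniformly bounded by $C e^{-c|x|^2-c|y|^2}$ which is trivially $\lesssim W_t(x-y)=(4\pi t)^{-d/2}e^{-|x-y|^2/(4t)}$ since for $t\ge 1$ the right side is $\gtrsim t^{-d/2}e^{-c|x-y|^2}$ and one can swap the constants — here one uses that $2\langle x,y\rangle\le |x|^2+|y|^2$ to bound $e^{-c|x|^2-c|y|^2}\le e^{-c|x-y|^2/2}$.

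The main obstacle I anticipate is making the two Bessel asymptotics into genuine two‑sided inequalities valid uniformly over all of $(0,\infty)$ for the argument and over the whole parameter range $\nu\ge -1/2$ — in particular handling the transition region $2xy/s\approx 1$ and the negative‑order case $-1/2\le \nu<0$, where $u^\nu$ blows up as $u\to 0$ but is compensated by the $\sqrt{xy}$ factor only because $\nu+1/2\ge 0$. The cleanest way around this is to use the uniform bound $I_\nu(u)\le C_\nu u^\nu e^{u}(1+u)^{-\nu-1/2}$, valid for $\nu\ge -1/2$ and all $u>0$ (a standard consequence of the integral representation of $I_\nu$), which interpolates between the two regimes in one stroke; then $\sqrt{xy}\,I_\nu(2xy/s)\le C_\nu (xy)^{\nu+1/2}s^{-\nu}e^{2xy/s}(1+2xy/s)^{-\nu-1/2}$, and since $(xy)^{\nu+1/2}(s+2xy)^{-\nu-1/2}\le 2^{-\nu-1/2}$ because $\nu+1/2\ge 0$ and $2xy\le s+2xy$, everything collapses to $C_\nu \sqrt{s}\,e^{2xy/s}$ times a bounded factor, and the exponential bookkeeping above finishes the proof. (This is exactly the argument in \cite[Proposition 2.1]{NS}, which we are quoting; the sketch above is how one verifies it.)
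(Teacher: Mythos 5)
The paper does not prove this proposition; it is quoted verbatim from \cite[Proposition 2.1]{NS}, so there is no internal proof to compare against. Your overall strategy --- tensorize to one dimension, control $\sqrt{xy}\,I_{\nu}(u)$ by the uniform bound $I_{\nu}(u)\le C_{\nu}u^{\nu}e^{u}(1+u)^{-\nu-1/2}$ (valid for $\nu\ge-1/2$, which is where the hypothesis $\alpha\in[-1/2,\infty)^d$ enters), and then complete the square in the exponent --- is exactly the standard argument, and it does work. But the bookkeeping as written contains errors that break the proof. First, the tensorization is inconsistent: $d$ copies of a factor $(\sinh 2t)^{-1/2}$ give $(\sinh 2t)^{-d/2}$, not the $(\sinh 2t)^{-d}$ appearing in \eqref{Kernel}; the correct one--dimensional kernel carries $(\sinh 2t)^{-1}$, and the Bessel bound returns a factor $(\sinh 2t)^{+1/2}$, netting $(\sinh 2t)^{-1/2}\le(2t)^{-1/2}$. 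Second, your key identity is false: $\cosh(2t)(x^2+y^2)-4xy=\cosh(2t)(x-y)^2+2(\cosh(2t)-2)xy$, not $+2(\cosh(2t)-1)xy$, and since $\cosh(2t)-2<0$ for small $t$ the claimed lower bound by $(x-y)^2$ fails; on the diagonal the exponent is then \emph{positive}. The identity you wrote is the correct one for $\cosh(2t)(x^2+y^2)-2xy$, which is what arises when the Bessel argument is $x_iy_i/\sinh 2t$; a Hille--Hardy computation shows this (without the factor $2$) is the correct kernel, i.e.\ the factor $2$ in \eqref{Kernel} appears to be a typo, and with that factor present the stated Gaussian bound would actually be false. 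You should flag and resolve this rather than force the algebra through.

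Third, and most substantively, your endgame does not deliver the stated inequality. After the (corrected) completion of the square the exponent is $-\tfrac{1}{2}\coth(2t)\,(x-y)^2$ (equivalently $-\tfrac14[(x+y)^2\tanh t+(x-y)^2\coth t]$ in Mehler form), and one must use $\coth(2t)\ge\tfrac{1}{2t}$ to obtain exactly $e^{-(x-y)^2/(4t)}$. By discarding $\cosh(2t)$ you only retain $e^{-(x-y)^2/(2\sinh 2t)}$, and since $\sinh 2t\ge 2t$ this is \emph{weaker} than $e^{-(x-y)^2/(4t)}$; invoking ``$\sinh 2t\sim t$'' only yields $e^{-c(x-y)^2/t}$ with some $c<1/4$, and a Gaussian with a smaller exponent constant is not pointwise dominated by $C\,W_t(x-y)$ as the proposition literally asserts. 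The same looseness with constants affects your $t\ge1$ case (``one can swap the constants'' does not survive $t\to\infty$ without the exponential decay of $(\sinh 2t)^{-1/2}$). The clean repair needs no case split: $(\sinh 2t)^{-1/2}\le(2t)^{-1/2}$ and $\coth(2t)\ge(2t)^{-1}$ hold for all $t>0$ and give $K_t^{\alpha}(x,y)\le C_{\alpha}(2t)^{-d/2}e^{-|x-y|^2/(4t)}\lesssim W_t(x-y)$ in one stroke.
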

The Gaussian upper bounds imply the following finite-speed propagation property of the wave operator $\cos t\sqrt{\mathcal{L}^{\alpha}}$.
\begin{lemma}\label{Finit}
There exists a constant $c_0>0$ such that
\begin{equation}\label{Finit.1}
supp\,K_{\cos(t\sqrt{\mathcal{L}^{\alpha}})}(x,y)\subset D(t):=\{(x,y)\in \mathbb{R}_{+}^{d}\times\mathbb{R}_{+}^{d}:\arrowvert x-y\arrowvert\leq c_0t\},\quad \forall t>0,
\end{equation}
where $K_{\cos(t\sqrt{\mathcal{L}^{\alpha}})}(x,y)$ is the kernel associated to $\cos(t\sqrt{\mathcal{L}^{\alpha}})$.
\end{lemma}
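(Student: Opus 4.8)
The plan is to deduce the finite-propagation-speed property from the Gaussian upper bounds of Proposition 2.2 via the standard Davies--Gaffney / Sikora argument, which applies in any metric measure space where the heat semigroup satisfies pointwise Gaussian bounds. First I would record the key consequence of Proposition 2.2: the Davies--Gaffney estimate
\[
\left| \int_{\mathbb{R}_+^d \times \mathbb{R}_+^d} K_{e^{-t\mathcal{L}^\alpha}}(x,y) f(y) \overline{g(x)} \, dy \, dx \right| \lesssim \exp\left( -\frac{\operatorname{dist}(U,V)^2}{C t} \right) \lVert f \rVert_2 \lVert g \rVert_2
\]
for all $f$ supported in $U$ and $g$ supported in $V$, with $U,V \subset \mathbb{R}_+^d$ Borel sets; this follows by integrating the pointwise bound $K_t^\alpha(x,y) \le C_\alpha W_t(x-y)$ against $|f|$ and $|g|$ and using Cauchy--Schwarz, since $\operatorname{dist}(U,V) \le |x-y|$ on the relevant region and $W_t(x-y) = (4\pi t)^{-d/2} e^{-|x-y|^2/(4t)}$ already carries the Gaussian factor. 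Here $\operatorname{dist}$ is the Euclidean distance inherited from $\mathbb{R}^d$, which is exactly the metric in which the statement is phrased.

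Next I would invoke the abstract equivalence, due to Sikora (building on Cheeger--Gromov--Taylor and Davies), that for a non-negative self-adjoint operator on $L^2$ of a metric measure space, the Davies--Gaffney estimate above is \emph{equivalent} to finite speed of propagation for the wave equation: there exists $c_0 > 0$ such that whenever $\operatorname{supp} f \subset U$, $\operatorname{supp} g \subset V$ and $\operatorname{dist}(U,V) > c_0 |t|$, one has $\langle \cos(t\sqrt{\mathcal{L}^\alpha}) f, g \rangle = 0$. Concretely, the implication from Gaussian/Davies--Gaffney bounds to finite propagation speed is obtained by writing $\cos(t\sqrt{\mathcal{L}^\alpha})$ through the subordination/Phragmén--Lindelöf argument applied to the analytic family $z \mapsto e^{-z\mathcal{L}^\alpha}$ on the right half-plane: the Davies--Gaffney bound extends to complex times $z$ with $\operatorname{Re} z > 0$ with the Gaussian exponent $-\operatorname{dist}(U,V)^2 \operatorname{Re}(1/z)/C$, and letting $\operatorname{Re} z \to 0$ forces the boundary values, which encode $\cos(t\sqrt{\mathcal{L}^\alpha})$ via the spectral calculus, to vanish when $\operatorname{dist}(U,V)$ is large compared to $|t|$. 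Translating the pairing statement into the language of Schwartz kernels gives precisely $\operatorname{supp} K_{\cos(t\sqrt{\mathcal{L}^\alpha})} \subset D(t) = \{|x-y| \le c_0 t\}$, with $c_0$ determined by the constant $C$ in the Gaussian bound (one may take $c_0$ proportional to $C^{-1/2}$, and in fact $c_0$ can be normalized to $1$ after rescaling, but any fixed $c_0$ suffices for our purposes).

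I expect the main (and essentially only) subtlety to be a bookkeeping one rather than a conceptual one: one must make sure the abstract machinery genuinely applies in the present setting, i.e. that $\mathbb{R}_+^d$ with Euclidean distance and Lebesgue measure is a metric measure space of the type covered (it is — the doubling property is not even needed for the Davies--Gaffney $\Leftrightarrow$ finite-propagation equivalence), and that $\mathcal{L}^\alpha$ is a genuinely non-negative self-adjoint operator (established in the Introduction, since all eigenvalues $4n + 2|\alpha|_1 + 2d$ are positive). A minor point worth a sentence is that the heat kernel $K_t^\alpha$ is only defined on the open quadrant, but since Lebesgue measure gives the boundary $\partial \mathbb{R}_+^d$ measure zero, this causes no difficulty in the $L^2$ pairings. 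Given these checks, the lemma follows immediately by citing the equivalence; no oscillatory-integral or Bessel-function analysis is required at this stage, which is the whole point — the finite-speed property is purely a soft consequence of the Gaussian bound already in hand.
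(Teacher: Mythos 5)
Your proposal is correct and is essentially the paper's own argument: the paper does not prove Lemma \ref{Finit} but simply notes that the Gaussian upper bound of Proposition 2.1 implies finite propagation speed and cites \cite{HLMML,SA}, and the route you describe (pointwise Gaussian bound $\Rightarrow$ Davies--Gaffney $\Rightarrow$ finite speed via Sikora's equivalence) is exactly the content of those references.
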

For the proof of Lemma \ref{Finit}, we refer to \cite{HLMML,SA} for example. We also need the following Littlewood-Paley inequality for the Laguerre operator $\mathcal{L}^{\alpha}$.
\begin{proposition}\label{PL}
Fix a non-zero $C^{\infty}$ bump function $\psi$ on $\mathbb{R}$ such that supp\,$\psi\subseteq(\frac{1}{2},2)$. Let $\psi_k(t)=\psi(2^{-k}t)$, $k\in\mathbb{Z}$, for $t>0$. Then, for any $-d<\beta<d$,
\begin{equation}\label{PL1.1}
\left\lVert \left(\sum_{k=-\infty}^{\infty}\arrowvert \psi_k(\sqrt{\mathcal{L}^{\alpha}})f\arrowvert^2\right)^{1/2}\right\rVert_{L^2(\mathbb{R}_{+}^d,(1+\arrowvert x\arrowvert)^{\beta})}\leq C\lVert  f\rVert_{L^2(\mathbb{R}_{+}^d,(1+\arrowvert x\arrowvert)^{\beta})}.
\end{equation}
\end{proposition}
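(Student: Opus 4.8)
\textbf{Proof plan for Proposition \ref{PL}.}

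The plan is to deduce this weighted Littlewood--Paley inequality from the Gaussian heat kernel bounds for $\mathcal{L}^{\alpha}$ together with the general theory of spectral multipliers on spaces of homogeneous type. First I would observe that $\mathbb{R}_+^d$ equipped with the Euclidean metric and Lebesgue measure is a doubling metric measure space, and that by the Proposition of Nowak--Stempak quoted above the heat semigroup $e^{-t\mathcal{L}^{\alpha}}$ has a kernel dominated by the Gauss--Weierstrass kernel $W_t(x-y)$; in particular $\mathcal{L}^{\alpha}$ satisfies Gaussian upper bounds of order $2$. It is a standard fact (see e.g. the work of Duong--Ouhabaz--Sikora, or Christ's square function arguments) that such an operator admits a Littlewood--Paley decomposition: for a bump $\psi$ supported in $(1/2,2)$ one has $\big\|(\sum_k |\psi_k(\sqrt{\mathcal{L}^{\alpha}})f|^2)^{1/2}\big\|_{L^2}\sim \|f\|_{L^2}$ on the unweighted space. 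The content of the proposition is to upgrade this to the $A_2$-type weights $(1+|x|)^{\beta}$ for $-d<\beta<d$.

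The key step is to recognize that $w(x)=(1+|x|)^{\beta}$ is a Muckenhoupt $A_2$ weight on $\mathbb{R}^d$ (hence on $\mathbb{R}_+^d$) precisely in the range $-d<\beta<d$: for $\beta\geq 0$ the weight is locally bounded and grows polynomially, and the $A_2$ condition near infinity and at the origin reduces to the classical computation that $|x|^{\beta}\in A_2(\mathbb{R}^d)$ iff $-d<\beta<d$; adding the $1$ only helps near the origin. Granting this, I would invoke the weighted Littlewood--Paley / spectral multiplier theory for operators with Gaussian bounds: the square function $f\mapsto (\sum_k |\psi_k(\sqrt{\mathcal{L}^{\alpha}})f|^2)^{1/2}$ is controlled by a vector-valued Calder\'on--Zygmund operator (its kernel satisfies the standard size and H\"ormander regularity bounds because $\psi$ is smooth and compactly supported away from the origin, and these bounds follow from the finite-speed-of-propagation property in Lemma \ref{Finit} combined with the Gaussian estimates via the Fourier inversion $\psi_k(\sqrt{\mathcal{L}^{\alpha}})=\int \hat{\psi}_k(t)\cos(t\sqrt{\mathcal{L}^{\alpha}})\,dt$). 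Consequently it is bounded on $L^2(w)$ for every $w\in A_2$, which gives \eqref{PL1.1}.

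Concretely, the steps in order: (i) record that $\mathcal{L}^{\alpha}$ has a heat kernel with Gaussian upper bounds and that $\cos(t\sqrt{\mathcal{L}^{\alpha}})$ has finite propagation speed (Lemma \ref{Finit}); (ii) write $\psi_k(\sqrt{\mathcal{L}^{\alpha}})$ via the cosine transform and use (i) to show its kernel $K_k(x,y)$ satisfies $|K_k(x,y)|\lesssim 2^{kd}(1+2^k|x-y|)^{-N}$ together with an analogous H\"ormander-type smoothness estimate; (iii) assemble these into the statement that the square function is a bounded vector-valued CZ operator on the doubling space $(\mathbb{R}_+^d,|\cdot|,dx)$; (iv) check $(1+|x|)^{\beta}\in A_2$ for $-d<\beta<d$; (v) apply the weighted estimate for vector-valued CZ operators to conclude. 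The main obstacle is step (ii)--(iii): obtaining the correct kernel decay and regularity uniformly in $k$ so that the Calder\'on--Zygmund machinery applies; this is where the finite-speed-of-propagation property and the Gaussian bounds must be combined carefully, but since $\mathbb{R}_+^d$ with Lebesgue measure is a genuine doubling space this is by now routine, and the weighted conclusion then follows from the standard extrapolation/weighted-norm theory. Alternatively, one may bypass the CZ verification by citing an off-the-shelf weighted spectral multiplier theorem for non-negative self-adjoint operators with Gaussian bounds, applied to each $\psi_k$ and summed using the $A_2$ boundedness of the associated square function.
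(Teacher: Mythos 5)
Your proposal is correct and follows essentially the same route as the paper: the authors likewise reduce the proposition to the standard weighted Littlewood--Paley argument for operators with Gaussian heat kernel upper bounds, citing \cite{CD} and \cite{Stein} for the details and emphasizing, exactly as you do, that the only point requiring the restriction $-d<\beta<d$ is that $(1+|x|)^{\beta}$ belongs to the $A_2$ class precisely in that range. Your write-up merely fills in the standard Calder\'on--Zygmund/finite-speed-of-propagation steps that the paper omits by reference.
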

\begin{proof} The argument is somewhat standard, see, for example, \cite[41, Chapter]{Stein}. Moreover, the proof provided in \cite{CD} applies to general differential operators whose heat kernels satisfy Gaussian upper bounds, we omit the details here and refer readers to \cite[p1.6]{CD} for a complete treatment. It should be emphasized that its proof requires the weight function $(1+|x|)^{\beta}$ to belong to the $A_2$ class, and $(1+|x|)^{\beta}$ belongs to the $A_2$ class if and only if  $-d<\beta<d$, which is precisely why the condition  $-d<\beta<d$ is needed. For a good introduction of $A_p$ theory, we refer readers to \cite{GL, MS}.
\end{proof}
\subsection{Weighted estimates of the negative power $(1+\mathcal{L}^{\alpha})^{-\beta}$}
The ideas of this section is inspired by \cite{BT, NS1}. Given $\beta\geq0$, we proceed to prove a weighted estimates for the negative power $(1+\mathcal{L}^{\alpha})^{-\beta}$. In view of the spectral theorem, it is expressed on $L^2(\mathbb{R}_{+}^d)$ by the spectral series 
\begin{equation}\label{R3.1}
(1+\mathcal{L}^{\alpha})^{-\beta}f(x)=\sum_{\mu\in \mathbb{N}^d}(4\arrowvert\mu\arrowvert_1+2\arrowvert\alpha\arrowvert_1+2)^{-\beta}\langle f,\varphi_{\mu}^{\alpha}(x)\rangle\varphi_{\mu}^{\alpha}(x)\quad x\in \mathbb{R}_{+}^d,\quad f\in L^2(\mathbb{R}_+^d).
\end{equation}
The main goal in this subsection is to establish the following weighted estimate for the operators $(1+\mathcal{L}^{\alpha})^{-\beta}$, which will be crucial for proving the low-frequency part of the square function estimate (Proposition \ref{SP} below).
\begin{lemma}\label{weighted}
Let $\beta\geq 0$. Then, the estimate 
\begin{equation}\label{weighted1.1}
\lVert (1+\arrowvert x\arrowvert)^{2\beta}(1+\mathcal{L}^{\alpha})^{-\beta}f\rVert_{L^2(\mathbb{R}_{+}^{d})}\leq C\lVert f\rVert_{L^2(\mathbb{R}_{+}^{d})}
\end{equation}
holds for any  $f\in L^2(\mathbb{R}_{+}^{d})$.
\end{lemma}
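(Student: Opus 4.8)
The plan is to estimate the kernel of the operator $(1+|x|)^{2\beta}(1+\mathcal{L}^{\alpha})^{-\beta}$ directly using the heat-semigroup subordination formula
\[
(1+\mathcal{L}^{\alpha})^{-\beta}=\frac{1}{\Gamma(\beta)}\int_0^{\infty}t^{\beta-1}e^{-t}e^{-t\mathcal{L}^{\alpha}}\,dt,
\]
and then to deduce $L^2$-boundedness by a Schur-test argument. Writing $G_{\beta}^{\alpha}(x,y)$ for the kernel of $(1+\mathcal{L}^{\alpha})^{-\beta}$, the representation above together with the Gaussian bound $K_t^{\alpha}(x,y)\le C_{\alpha}W_t(x-y)$ from \cite[Proposition 2.1]{NS} yields
\[
0\le G_{\beta}^{\alpha}(x,y)\le \frac{C_{\alpha}}{\Gamma(\beta)}\int_0^{\infty}t^{\beta-1}e^{-t}(4\pi t)^{-d/2}e^{-|x-y|^2/(4t)}\,dt=:C_{\alpha}\,g_{\beta}(x-y),
\]
so it suffices to bound the scalar convolution-type operator with kernel $(1+|x|)^{2\beta}g_{\beta}(x-y)$ on $L^2(\mathbb{R}_+^d)$. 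The crucial point — this is where the discreteness of the spectrum (equivalently, the factor $e^{-t}$, i.e.\ the ``$1+$'' in $(1+\mathcal{L}^{\alpha})^{-\beta}$) enters — is that $g_{\beta}$ enjoys \emph{exponential} decay at infinity: splitting the $t$-integral at $t=|x-y|$ (or at $t=1$ and using $e^{-t}\le e^{-|x-y|^2/(8t)}$ on the relevant range) gives $g_{\beta}(z)\lesssim e^{-c|z|}$ for $|z|\ge 1$, while for $|z|\le 1$ one has the local bound $g_{\beta}(z)\lesssim |z|^{2\beta-d}$ if $2\beta<d$, $g_{\beta}(z)\lesssim 1+|\log|z||$ if $2\beta=d$, and $g_{\beta}(z)\lesssim 1$ if $2\beta>d$; in every case $g_{\beta}\in L^1(\mathbb{R}^d)$ with, moreover, $\int g_{\beta}(z)(1+|z|)^{2\beta}\,dz<\infty$ since $2\beta$-polynomial growth is absorbed by the exponential tail.

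With these kernel bounds in hand, I would run Schur's test on the integral operator $Tf(x)=\int_{\mathbb{R}_+^d}(1+|x|)^{2\beta}g_{\beta}(x-y)f(y)\,dy$. Using the elementary inequality $(1+|x|)^{2\beta}\lesssim (1+|x-y|)^{2\beta}(1+|y|)^{2\beta}$ is \emph{not} quite enough because of the extra $(1+|y|)^{2\beta}$; instead I would exploit that we only need $L^2\to L^2$ with the weight on the \emph{output} side. The clean way: observe that the adjoint-type kernel is $(1+|y|)^{2\beta}g_{\beta}(x-y)$, and both $\sup_x\int (1+|x|)^{2\beta}g_{\beta}(x-y)(1+|y|)^{-\beta}\,dy$-type quantities are controlled once we pick the Schur weight $w(x)=(1+|x|)^{-\beta}$. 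Concretely, with $w(x)=(1+|x|)^{-\beta}$ one checks
\[
\int_{\mathbb{R}_+^d}(1+|x|)^{2\beta}g_{\beta}(x-y)\,w(y)\,dy\lesssim \int_{\mathbb{R}_+^d}(1+|x-y|)^{2\beta}g_{\beta}(x-y)\,(1+|y|)^{2\beta}(1+|y|)^{-\beta}\,dy,
\]
and here the polynomial factor $(1+|x-y|)^{2\beta}$ times $g_{\beta}(x-y)$ is integrable, but the lingering $(1+|y|)^{\beta}$ must be handled by choosing the two Schur weights asymmetrically — $w_1(x)=(1+|x|)^{-2\beta-d}$ on one side and $w_2(y)=(1+|y|)^{-d}$, say — so that all $y$-integrals converge. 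The bookkeeping of which negative powers of $(1+|y|)$ make the Schur integrals finite while keeping the weights genuine is routine but must be done carefully.

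An alternative, and perhaps cleaner, route avoids Schur entirely: factor $(1+\mathcal{L}^{\alpha})^{-\beta}=(1+\mathcal{L}^{\alpha})^{-\beta/2}\cdot(1+\mathcal{L}^{\alpha})^{-\beta/2}$ and prove instead the single-power statement $\|(1+|x|)^{\beta}(1+\mathcal{L}^{\alpha})^{-\beta}f\|_{L^2}\lesssim\|f\|_{L^2}$ is equivalent, by duality and composition, to the desired one; but in fact the most economical argument is: the kernel bound $G_{\beta}^{\alpha}(x,y)\lesssim e^{-c|x-y|}$ for $|x-y|\ge 1$ plus local integrability already shows that $(1+|x|)^{2\beta}G_{\beta}^{\alpha}(x,y)$ and its transpose are both dominated by $C\,\Phi(x-y)$ with $\Phi\in L^1(\mathbb{R}^d)$ — because the weight $(1+|x|)^{2\beta}$ grows only polynomially whereas off the diagonal $G_{\beta}^{\alpha}$ decays exponentially, so on $\{|x-y|\ge 1\}$ the product is $\lesssim (1+|x|)^{2\beta}e^{-c|x-y|}\lesssim e^{-c'|x-y|}\cdot\big[(1+|x-y|)^{2\beta}e^{-(c-c')|x-y|}\big]\lesssim e^{-c'|x-y|}$, uniformly — whence Young's inequality (or the symmetric Schur test with constant weights, since the dominating kernel is now a genuine $L^1$ convolution kernel in $x-y$) gives boundedness on $L^2$.

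\textbf{Main obstacle.} The technical heart is the off-diagonal exponential decay of $g_{\beta}$ and the verification that multiplying by the polynomial weight $(1+|x|)^{2\beta}$ still leaves an $L^1$ (convolution-dominated) kernel; this is exactly the place where the gap in the spectrum of $\mathcal{L}^{\alpha}$ (the ``$1+$'') is used, and it is the analogue, in the present heat-kernel approach, of \cite[Lemma 1.4]{CD}. The near-diagonal part ($|x-y|\le 1$) requires only the mild observation that $g_{\beta}\in L^1_{loc}$ with the stated modulus of continuity, which follows from the explicit $t$-integral; no cancellation is needed there. Once the dominating kernel is shown to be an integrable convolution kernel, the $L^2$-bound is immediate and the constant depends only on $d,\alpha,\beta$, as required.
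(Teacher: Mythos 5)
There is a genuine gap, and it sits at the very first reduction. You dominate the kernel of $(1+\mathcal{L}^{\alpha})^{-\beta}$ by a translation-invariant kernel, $G_{\beta}^{\alpha}(x,y)\le C\,g_{\beta}(x-y)$, and then try to prove that $(1+|x|)^{2\beta}g_{\beta}(x-y)$ is an $L^2$-bounded (indeed convolution-dominated) kernel. That cannot work: the operator with kernel $(1+|x|)^{2\beta}g_{\beta}(x-y)$ is \emph{unbounded} on $L^2$. Its row sums satisfy $\int (1+|x|)^{2\beta}g_{\beta}(x-y)\,dy\gtrsim (1+|x|)^{2\beta}\int_{|z|\le 1}g_{\beta}(z)\,dz\sim (1+|x|)^{2\beta}\to\infty$, and testing against a bump supported near a point with $|x_0|=R$ shows the operator norm is at least $cR^{2\beta}$. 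The same example shows the analogous statement for the free resolvent, $\|(1+|x|)^{2\beta}(1-\Delta)^{-\beta}f\|_2\lesssim\|f\|_2$, is false; so no argument that only uses a Gaussian (i.e.\ translation-invariant) upper bound on the heat kernel can prove the lemma. The specific false step in your ``cleanest'' route is the chain $(1+|x|)^{2\beta}e^{-c|x-y|}\lesssim (1+|x-y|)^{2\beta}e^{-(c-c')|x-y|}e^{-c'|x-y|}$: you have silently replaced $|x|$ by $|x-y|$, which fails badly when $|x|$ is large and $y$ is close to $x$ --- and that near-diagonal, large-$|x|$ regime is precisely the hard part of the lemma, which your proposal never addresses. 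Your attribution of the gain to the spectral gap (the factor $e^{-t}$) is also off: that factor only produces exponential \emph{off-diagonal} decay of $g_{\beta}$; it gives no decay in $|x|$ on the diagonal.

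What is actually needed, and what the paper uses, is the confinement coming from the harmonic-oscillator part of the potential, visible only in the exact heat kernel \eqref{Kernel}: after the substitution $t=\tfrac12\log\frac{1+s}{1-s}$ and the Bessel bound \eqref{I3}, one gets $K^{\alpha}(\beta;x,y)\lesssim\int_0^1\zeta_{\beta}(s)\,e^{-\frac14 s|x+y|^2-\frac{1}{4s}|x-y|^2}\,\frac{ds}{s}$, with the crucial extra factor $e^{-\frac14 s|x+y|^2}$ that decays in $|x|+|y|$, not just in $|x-y|$. On the problematic region $\{|x|>2|x-y|\}$ one has $|x|<|x+y|$, and the computation in the paper (rescaling $u=s|x|^2$, $r=|x|\,|x-y|$) shows $\int_{\{|x|>2|x-y|\}}K_1^{\alpha}(\beta;x,y)\,dy\lesssim |x|^{-2\beta}$, which exactly cancels the weight $(1+|x|)^{2\beta}$; only then does Schur's test close. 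So your overall architecture (subordination formula, kernel estimate, Schur test) matches the paper's, but the kernel bound you propose to feed into Schur's test throws away the one piece of information --- the $e^{-cs(|x|^2+|y|^2)}$ factor --- without which the statement is false.
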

It is clear that \eqref{weighted1.1} holds for $\beta=0$. For every $\beta>0$, to prove \eqref{weighted1.1}, following \cite{BT, NS1}, we introduce the following operator. 
\begin{definition}
Given $\beta>0$, we define for $f\in \mathbb{S}_{\mathcal{L}^{\alpha}}$, the operator 
\begin{equation}
T_{\alpha}^{\beta}f(x)=\frac{1}{\Gamma(\beta)}\int_{0}^{\infty}e^{-t(1+\mathcal{L}^{\alpha})}f(x)t^{\beta}\frac{dt}{t},\quad\quad x\in\mathbb{R}_{+}^d,
\end{equation}
where $\{e^{-t\mathcal{L}^{\alpha}}\}_{t\geq0}$ is the heat semi-group associated to $\mathcal{L}^{\alpha}$ and $\mathbb{S}_{\mathcal{L}^{\alpha}}$ denotes the set of finite linear combinations of eigenfunctions of the operator $\mathcal{L}^{\alpha}$.
\end{definition}
\begin{remark}
If $\beta>0$ and $\mu\in \mathbb{N}^d$, by using the $\Gamma$ function and the fact
$$ e^{-t(1+\mathcal{L}^{\alpha})}\varphi_{\mu}^{\alpha}=e^{-t(4\arrowvert\mu\arrowvert_1+2\arrowvert\alpha\arrowvert_1+2)}\varphi_{\mu}^{\alpha},$$
we have 
\begin{equation*}
T_{\alpha}^{\beta}\varphi_{\mu}^{\alpha}(x)=\frac{1}{\Gamma(\beta)}\int_{0}^{\infty}e^{-t(1+\mathcal{L}^{\alpha})}\varphi_{k}^{\alpha}(x)t^{\beta}\frac{dt}{t}=(4\arrowvert\mu\arrowvert_1+2\arrowvert\alpha\arrowvert_1+2)^{-\beta}\varphi_{\mu}^{\alpha}(x),\quad x\in \mathbb{R}_{+}^d.
\end{equation*}
We will see later that this definition will consistent with \eqref{R3.1} (see also \cite{NS1}) for every $f\in L^2(\mathbb{R}_+^d)$.
\end{remark}
\begin{proposition}\label{FK}
For every $\alpha\in [-1/2,\infty)^d$, if $\beta>0$, then the operator $T_{\alpha}^{\beta}$ has integral representation
\begin{equation}
T_{\alpha}^{\beta}f(x)=\int_{\mathbb{R}_{+}^{d}}K^{\alpha}(\beta;x,y)f(y)dy, \quad\quad x\in\mathbb{R}_{+}^{d},
\end{equation}
for all  $f\in \mathbb{S}_{\mathcal{L}^{\alpha}}$. There exists a constant $C$ depends only on $\alpha$, $\beta$ and $d$ such that 
\begin{equation}\label{C1.1}
K^{\alpha}(\beta;x,y)\leq C\Phi^{\alpha}(\beta;x-y),\quad\quad for\quad all\quad x,y\in\mathbb{R}_{+}^d.
\end{equation}
where $\Phi^{\alpha}(\beta;x)$ is defined by
\begin{equation}\label{C1.2}
\Phi^{\alpha}(\beta;x)=\left\{\begin{array}{lr}
\frac{\mathcal{X}_{\arrowvert x\arrowvert<1}(x)}{\arrowvert x\arrowvert^{d-2\beta}}+e^{-\frac{1}{4}\arrowvert x\arrowvert^2}\mathcal{X}_{\arrowvert x\arrowvert\geq 1}(x),\quad\quad\quad\quad\quad\qquad if\,\,\beta<\frac{d}{2},\\
\log\left(\frac{e}{\arrowvert x\arrowvert}\right)\mathcal{X}_{\arrowvert x\arrowvert<1}(x)+e^{-\frac{1}{4}\arrowvert x\arrowvert^2}\mathcal{X}_{\arrowvert x\arrowvert\geq 1}(x),\quad\quad\quad if\,\,\beta=\frac{d}{2},\\
\mathcal{X}_{\arrowvert x\arrowvert<1}(x)+e^{-\frac{1}{4}\arrowvert x\arrowvert^2}\mathcal{X}_{\arrowvert x\arrowvert\geq 1}(x),\quad\quad\quad\quad\quad\quad\,\,\,if\,\,\beta>\frac{d}{2}.
\end{array}
\right.
\end{equation}
\end{proposition}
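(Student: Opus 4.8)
\emph{Sketch of the proof.} The plan is to represent the kernel of $T_\alpha^\beta$ as the subordinated heat kernel
\[
K^{\alpha}(\beta;x,y)=\frac{1}{\Gamma(\beta)}\int_0^{\infty}e^{-t}\,t^{\beta-1}K_t^{\alpha}(x,y)\,dt,
\]
and to estimate this integral by splitting the $t$-range at $t=1$: on $0<t<1$ the Gaussian bound $K_t^{\alpha}(x,y)\le C_\alpha W_t(x-y)$ of \cite[Proposition 2.1]{NS} is all that is needed, while on $t\ge1$ one returns to the explicit formula \eqref{Kernel} to extract genuine spatial decay. To get the representation, fix $f\in\mathbb{S}_{\mathcal{L}^{\alpha}}$; since $f$ is a finite linear combination of eigenfunctions, $e^{-t(1+\mathcal{L}^{\alpha})}f(x)$ decays exponentially as $t\to\infty$ and stays bounded as $t\to0^{+}$, so $T_\alpha^\beta f(x)$ is an absolutely convergent integral. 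Writing $e^{-t(1+\mathcal{L}^{\alpha})}f(x)=\int_{\mathbb{R}_+^d}e^{-t}K_t^{\alpha}(x,y)f(y)\,dy$ and using $K_t^{\alpha}(x,y)\le C_\alpha W_t(x-y)$ together with $f\in L^1\cap L^\infty$ (the $\varphi_\mu^{\alpha}$ have Gaussian decay), Tonelli's theorem gives $\int_0^{\infty}\!\int_{\mathbb{R}_+^d}e^{-t}t^{\beta-1}K_t^{\alpha}(x,y)|f(y)|\,dy\,dt<\infty$ --- near $t=0$ because $\beta>0$ makes $t^{\beta-1}$ integrable, near $t=\infty$ because of the factor $e^{-t}$. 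Hence Fubini yields the displayed formula for $K^{\alpha}(\beta;\cdot,\cdot)$, the $t$-integral converging for every $x\ne y$, and also for $x=y$ precisely when $\beta>d/2$, in agreement with the right-hand side of \eqref{C1.2}.

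For the kernel bound put $r=|x-y|$ and write $K^{\alpha}(\beta;x,y)=I_0+I_{\infty}$ according to $t<1$ and $t\ge1$. In $I_0$ the bound $K_t^{\alpha}(x,y)\le C_\alpha(4\pi t)^{-d/2}e^{-r^2/(4t)}$ gives $I_0\lesssim\int_0^1 t^{\beta-1-d/2}e^{-r^2/(4t)}\,dt$, and the substitution $s=r^2/(4t)$ turns this into $(r^2/4)^{\beta-d/2}\int_{r^2/4}^{\infty}s^{d/2-\beta-1}e^{-s}\,ds$. When $r\ge1$, splitting at $t=1/2$ and using $e^{-r^2/(4t)}\le e^{-r^2/4}e^{-r^2/(8t)}$ for $t\le1/2$ shows $I_0\lesssim e^{-r^2/4}$, which is absorbed into the $e^{-\frac14|x-y|^2}\mathcal{X}_{|x-y|\ge1}$ term. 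When $r<1$ the incomplete $\Gamma$-integral behaves differently in the three ranges: for $\beta<d/2$ it is at most $\Gamma(d/2-\beta)$, so $I_0\lesssim r^{2\beta-d}$; for $\beta=d/2$ it is $\sim\log(1/r)$, so $I_0\lesssim\log(e/r)$; for $\beta>d/2$ its leading (lower-endpoint) contribution $(r^2/4)^{d/2-\beta}$ cancels the prefactor, so $I_0\lesssim1$. This reproduces exactly the three profiles of $\Phi^{\alpha}(\beta;\cdot)$ on $\{|x-y|<1\}$.

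For $I_{\infty}$ the Gaussian bound is too crude to see spatial decay, so I return to \eqref{Kernel}. For every $\nu\ge-1/2$ one has $\sqrt{z}\,I_{\nu}(z)\le C_{\nu}e^{z}$ on $z\ge0$ (the small-$z$ power $z^{\nu+1/2}$ being bounded there), so each factor obeys $\sqrt{x_iy_i}\,I_{\alpha_i}(2x_iy_i/\sinh2t)\le C_{\alpha_i}\sqrt{\sinh2t}\,e^{2x_iy_i/\sinh2t}$; hence $K_t^{\alpha}(x,y)\lesssim(\sinh2t)^{-d/2}\exp\!\left(\sum_{i=1}^{d}\left[-\tfrac12\coth(2t)(x_i^2+y_i^2)+\tfrac{2x_iy_i}{\sinh2t}\right]\right)$. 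Now for $t\ge1$ one has $\cosh2t>2$, so the $i$-th summand equals $-\tfrac12\coth(2t)(x_i-y_i)^2+x_iy_i\tfrac{2-\cosh2t}{\sinh2t}\le-\tfrac12(x_i-y_i)^2$, whence $K_t^{\alpha}(x,y)\lesssim(\sinh2t)^{-d/2}e^{-\frac12|x-y|^2}$ on $t\ge1$. Integrating, $I_{\infty}\lesssim e^{-\frac12|x-y|^2}\int_1^{\infty}e^{-t}t^{\beta-1}(\sinh2t)^{-d/2}\,dt\lesssim e^{-\frac12|x-y|^2}\le e^{-\frac14|x-y|^2}$, which for $|x-y|<1$ is $\lesssim1$. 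Adding $I_0$ and $I_{\infty}$ gives $K^{\alpha}(\beta;x,y)\le C\,\Phi^{\alpha}(\beta;x-y)$.

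The step I expect to demand the most care is the treatment of $I_{\infty}$: one must control the Bessel factors uniformly over the whole range $\alpha\in[-1/2,\infty)^d$ (down to $\alpha_i=-1/2$), and must check that the exponential growth $e^{2x_iy_i/\sinh2t}$ coming from the Bessel asymptotics is genuinely dominated by the quadratic decay $e^{-\frac12\coth(2t)(x_i^2+y_i^2)}$ for $t\ge1$ --- the sign of $2-\cosh2t$ being the crucial point. The small-time part $I_0$, by contrast, is routine once the substitution $s=r^2/(4t)$ is made, the three profiles in $\Phi^{\alpha}$ being just the familiar local behavior of a Bessel-type potential of order $\beta$.
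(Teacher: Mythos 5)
Your proposal is correct and follows essentially the same route as the paper: subordinate $(1+\mathcal{L}^{\alpha})^{-\beta}$ to the heat semigroup, split the time integral into a small-time piece (yielding the three local profiles via the incomplete Gamma integral after the substitution $s=r^2/(4t)$) and a large-time piece (yielding pure Gaussian decay from the bound $\sqrt{z}\,I_{\nu}(z)\le C_{\nu}e^{z}$, valid down to $\nu=-1/2$). The only cosmetic differences are that the paper first changes variables $t=\tfrac12\log\frac{1+s}{1-s}$ and splits at $s=1/2$ rather than at $t=1$, and that it rederives the Gaussian bound on the small-time piece from the Bessel asymptotics instead of quoting Proposition 2.1 of Nowak--Stempak as you do.
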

\begin{proof}
Our proof here is inspired by the argument in \cite[Proposition 2]{BT} where the authors discussed the Sobolev spaces associated to the harmonic oscillator. If $f\in \mathbb{S}_{\mathcal{L}^{\alpha}}$, then for $x\in \mathbb{R}_{+}^d$,
\begin{equation}
\begin{aligned}
(1+\mathcal{L}^{\alpha})^{-\beta}f(x)&=\frac{1}{\Gamma(\beta)}\int_{0}^{\infty}e^{-t(1+\mathcal{L}^{\alpha})}f(x)t^{\beta}\frac{dt}{t}\\
&=\frac{1}{\Gamma(\beta)}\int_{0}^{\infty}\int_{\mathbb{R}_{+}^d}K_{t}^{\alpha}(x,y)f(y)dye^{-t}t^{\beta}\frac{dt}{t},
\end{aligned}
\end{equation}
where $K_{t}^{\alpha}(x,y)$ is given by \eqref{Kernel}.
Therefore, if we show that for some constant $C$,
\begin{equation}
\frac{1}{\Gamma(\beta)}\int_{0}^{\infty}K_{t}^{\alpha}(x,y)e^{-t}t^{\beta}\frac{dt}{t}\leq C\Phi^{\alpha}(\beta;x-y),\quad\quad for\quad all\quad x,y\in\mathbb{R}_{+}^d,
\end{equation}
where $\Phi^{\alpha}\in L^{1}(\mathbb{R}^{d})$, then by Fubini's theorem,
\begin{equation}
(1+\mathcal{L}^{\alpha})^{-\beta}f(x)=\int_{\mathbb{R}_{+}^{d}}K^{\alpha}(\beta;x,y)f(y)dy, \quad\quad x\in\mathbb{R}_{+}^d,
\end{equation}
with
\begin{equation}
K^{\alpha}(\beta;x,y)=\frac{1}{\Gamma(\beta)}\int_{0}^{\infty}K_{t}^{\alpha}(x,y)e^{-t}t^{\beta}\frac{dt}{t}.
\end{equation}
We perform the change of variables  $t=\frac{1}{2}\log\left(\frac{1+s}{1-s}\right)$ and by \eqref{Kernel}, we obtain
\begin{equation}\label{KP}
K^{\alpha}(\beta;x,y)=\frac{1}{\Gamma(\beta)2^{\frac{d}{2}+\beta-1}}\int_{0}^{1}\zeta_{\beta}(s)e^{-\frac{1}{4}(s+\frac{1}{s})(\arrowvert x\arrowvert^2+\arrowvert y\arrowvert^2)}\prod_{i=1}^{d}\sqrt{\frac{1-s^2}{2s}x_iy_i}I_{\alpha_i}\left(\frac{1-s^2}{2s}x_iy_i\right)\frac{ds}{s},
\end{equation}
where
\begin{equation}
\zeta_{\beta}(s)=\sqrt{\frac{1-s}{1+s}}\left(\frac{1-s^2}{s}\right)^{\frac{d}{2}-1}\left\{ \log\left(\frac{1+s}{1-s}\right)\right\}^{\beta-1}.
\end{equation}
To proceed, we need some estimates for the modified Bessel function $I_{\nu}(z)$, $\nu>-1$. For $z\in\mathbb{R}_{+}$, we have
\begin{equation*}
\sqrt{z}I_{\nu}(z)=z^{\nu+\frac{1}{2}},\quad z\rightarrow 0,\quad and\quad\sqrt{z}I_{\nu}(z)=\frac{1}{\sqrt{2\pi}}e^{z}(1+O(\frac{1}{z})), \quad z\rightarrow\infty, 
\end{equation*}
which implies that there exist a constant $C_\alpha$ such that for $x=(x_1,\cdot\cdot\cdot,x_d)\in \mathbb{R}_{+}^d$ and $\alpha\in [-1/2,\infty)^d$,
\begin{equation}\label{I3}
\prod_{i=1}^{d}\sqrt{x_i}I_{\alpha}(x_i)\leq C_\alpha e^{x_1+\cdot\cdot\cdot+x_d}.
\end{equation}
Now, we split $K^{\alpha}(\beta;x,y)$ as $K^{\alpha}(\beta;x,y)=K^{\alpha}_1(\beta;x,y)+K^{\alpha}_2(\beta;x,y)$, where
\begin{equation*}
K^{\alpha}_1(\beta;x,y)=\frac{1}{\Gamma(\beta)2^{\frac{d}{2}+\beta-1}}\int_{0,\frac{1}{2}]}\zeta_{\beta}(s)e^{-\frac{1}{4}(s+\frac{1}{s})(\arrowvert x\arrowvert^2+\arrowvert y\arrowvert^2)}\prod_{i=1}^{d}\sqrt{\frac{1-s^2}{2s}x_iy_i}I_{\alpha_i}\left(\frac{1-s^2}{2s}x_iy_i\right)\frac{ds}{s},
\end{equation*}
and 
\begin{equation*}
K^{\alpha}_2(\beta;x,y)=\frac{1}{\Gamma(\beta)2^{\frac{d}{2}+\beta-1}}\int_{1/2}^{1}\zeta_{\beta}(s)e^{-\frac{1}{4}(s+\frac{1}{s})(\arrowvert x\arrowvert^2+\arrowvert y\arrowvert^2)}\prod_{i=1}^{d}\sqrt{\frac{1-s^2}{2s}x_iy_i}I_{\alpha_i}\left(\frac{1-s^2}{2s}x_iy_i\right)\frac{ds}{s}.
\end{equation*}
It is easier to estimate $K^{\alpha}_2(\beta;x,y)$. By the estimate \eqref{I3}, we obtain
\begin{equation}
\begin{aligned}
K^{\alpha}_2(\beta;x,y)&=\frac{1}{\Gamma(\beta)2^{\frac{d}{2}+\beta-1}}\int_{1/2}^{1}\zeta_{\beta}(s)e^{-\frac{1}{4}(s+\frac{1}{s})(\arrowvert x\arrowvert^2+\arrowvert y\arrowvert^2)}\prod_{i=1}^{d}\sqrt{\frac{1-s^2}{2s}x_iy_i}I_{\alpha_i}\left(\frac{1-s^2}{2s}x_iy_i\right)\frac{ds}{s}\\
&\leq\frac{C_{\alpha}}{\Gamma(\beta)2^{\frac{d}{2}+\beta-1}}\int_{1/2}^{1}\zeta_{\beta}(s)e^{-\frac{1}{4}(s+\frac{1}{s})(\arrowvert x\arrowvert^2+\arrowvert y\arrowvert^2)}e^{\frac{1-s^2}{2s}x\cdot y}\frac{ds}{s}\\
&=\frac{C_{\alpha}}{\Gamma(\beta)2^{\frac{d}{2}+\beta-1}}\int_{1/2}^{1}\zeta_{\beta}(s)e^{-\frac{1}{4}s(\arrowvert x+y\arrowvert^2-\frac{1}{4s}\arrowvert x-y\arrowvert^2}\frac{ds}{s}.
\end{aligned}
\end{equation}
Since the integral
\begin{equation*}
\int_{1/2}^{1}\zeta_{\beta}(s)\frac{ds}{s}<\infty,
\end{equation*}
thus we obtain
\begin{equation}\label{P1}
K^{\alpha}_2(\beta;x,y)\leq Ce^{-\frac{1}{4}\arrowvert x-y\arrowvert^2}e^{-\frac{1}{8}\arrowvert x+y\arrowvert^2}\leq  Ce^{-\frac{1}{4}\arrowvert x-y\arrowvert^2}.
\end{equation}
Next, we estimate the term $K^{\alpha}_1(\beta;x,y)$. Again by \eqref{I3}, we obtain that
\begin{equation}\label{3.18}
\begin{aligned}
K^{\alpha}_1(\beta;x,y)&=\frac{1}{\Gamma(\beta)2^{\frac{d}{2}+\beta-1}}\int_{0}^{1/2}\zeta_{\beta}(s)e^{-\frac{1}{4}(s+\frac{1}{s})(\arrowvert x\arrowvert^2+\arrowvert y\arrowvert^2)}\prod_{i=1}^{d}\sqrt{\frac{1-s^2}{2s}x_iy_i}I_{\alpha_i}\left(\frac{1-s^2}{2s}x_iy_i\right)\frac{ds}{s}\\
&\leq\frac{C_{\alpha}}{\Gamma(\beta)2^{\frac{d}{2}+\beta-1}}\int_{0}^{1/2}\zeta_{\beta}(s)e^{-\frac{1}{4}s\arrowvert x+y\arrowvert^2-\frac{1}{4s}\arrowvert x-y\arrowvert^2}\frac{ds}{s}.
\end{aligned}
\end{equation}
Now, it is easy to see that there exists a constant $C_1$ which depends only on $d$ and $\beta$ such that 
\begin{equation}\label{3.19}
\frac{s^{-\frac{d}{2}+\beta}}{C_1}\leq\zeta_{\beta}(s)\leq C_1s^{-\frac{d}{2}+\beta},\quad for \quad 0<s<1/2.
\end{equation}
Therefore 
\begin{equation*}
K^{\alpha}_1(\beta;x,y)\leq \frac{C_{\alpha}C_1}{\Gamma(\beta)2^{\frac{d}{2}+\beta-1}}\int_{0}^{1/2}s^{-\frac{d}{2}+\beta}e^{-\frac{1}{4s}\arrowvert x-y\arrowvert^2}\frac{ds}{s}
\end{equation*}
Since $\frac{1}{8s}+\frac{1}{4}\leq \frac{1}{4s}$ for $0\leq s\leq\frac{1}{2}$, then
if $\arrowvert x-y\arrowvert\geq1$, we have $\frac{1}{8s}+\frac{\arrowvert x-y\arrowvert^2}{4}\leq \frac{\arrowvert x-y\arrowvert^2}{4s}$ for $0\leq s\leq\frac{1}{2}$.
Thus, the last expression is bounded up to a constant by
\begin{equation*}
e^{-\frac{1}{4}\arrowvert x-y\arrowvert^2}\int_{0}^{1/2}s^{-\frac{d}{2}+\beta}e^{-\frac{1}{8s}}\frac{ds}{s},
\end{equation*}
hence we obtain
\begin{equation}\label{P3}
K^{\alpha}_1(\beta;x,y)\leq Ce^{-\frac{1}{4}\arrowvert x-y\arrowvert^2}. 
\end{equation}
 Finally, we study the region $\arrowvert x-y\arrowvert\leq1$. By a change of variables, we obtain
 \begin{equation*}
 \int_{0}^{1/2}s^{-\frac{d}{2}+\beta}e^{-\frac{1}{4s}(x-y)^2}\frac{ds}{s}=\frac{1}{\arrowvert x-y\arrowvert^{d-2\beta}}\int_{2\arrowvert x-y\arrowvert^2}^{\infty}s^{d/2-\beta}e^{-\frac{s}{4}}\frac{ds}{s}.
 \end{equation*}
In the case $\beta<\frac{d}{2}$, we get
\begin{equation}\label{P4}
K^{\alpha}_1(\beta;x,y)\leq\frac{C}{\arrowvert x-y\arrowvert^{d-2\beta}}.
\end{equation}
For the case $\beta=\frac{d}{2}$.
\begin{equation*}
\begin{aligned}
\int_{2\arrowvert x-y\arrowvert^2}^{\infty}e^{-\frac{s}{4}}\frac{ds}{s}&=\int_{2\arrowvert x-y\arrowvert^2}^{2}\frac{ds}{s}+\int_{2}^{\infty}e^{-\frac{s}{4}}\frac{ds}{s}\\
&=\log\left(\frac{1}{\arrowvert x-y\arrowvert}\right)+\int_{2}^{\infty}e^{-\frac{s}{4}}\frac{ds}{s}\\
&\leq 2\log\left(\frac{e}{\arrowvert x-y\arrowvert}\right)\left(1+\int_{2}^{\infty}e^{-\frac{s}{4}}\frac{ds}{s}\right).
\end{aligned}
\end{equation*}
Then, 
\begin{equation}\label{P5}
K^{\alpha}_1(\beta;x,y)\leq C\log\left(\frac{e}{\arrowvert x-y\arrowvert}\right).
\end{equation}
Finally, when $\beta>\frac{d}{2}$.
\begin{equation*}
\begin{aligned}
\int_{2\arrowvert x-y\arrowvert^2}^{\infty}s^{d/2-\beta}e^{-\frac{s}{4}}\frac{ds}{s}&\leq \int_{2\arrowvert x-y\arrowvert^2}^{2}s^{d/2-\beta}\frac{ds}{s}+\int_{2}^{\infty}s^{\frac{d}{2}-\beta}e^{-\frac{s}{4}}\frac{ds}{s}\\
&\leq \frac{C}{\arrowvert x-y\arrowvert^{2\beta-d}}.
\end{aligned}
\end{equation*}
Thus in this case we obtain 
\begin{equation}\label{P6}
K^{\alpha}_1(\beta;x,y)\leq C.
\end{equation}
We deduce from \eqref{P1}, \eqref{P3}, \eqref{P4}, \eqref{P5}, and \eqref{P6}, if we define for $x\in\mathbb{R}_{+}^{d}$,
\begin{equation*}
\Phi^{\alpha}(\beta;x)=\left\{\begin{array}{lr}
\frac{\mathcal{X}_{\arrowvert x\arrowvert<1}(x)}{\arrowvert x\arrowvert^{d-2\beta}}+e^{-\frac{1}{4}\arrowvert x\arrowvert^2}\mathcal{X}_{\arrowvert x\arrowvert\geq 1}(x),\quad\quad\quad\quad\quad\qquad if\,\,\beta<\frac{d}{2},\\
\log\left(\frac{e}{\arrowvert x\arrowvert}\right)\mathcal{X}_{\arrowvert x\arrowvert<1}(x)+e^{-\frac{1}{4}\arrowvert x\arrowvert^2}\mathcal{X}_{\arrowvert x\arrowvert\geq 1}(x),\quad\quad\quad if\,\,\beta=\frac{d}{2},\\
\mathcal{X}_{\arrowvert x\arrowvert<1}(x)+e^{-\frac{1}{4}\arrowvert x\arrowvert^2}\mathcal{X}_{\arrowvert x\arrowvert\geq 1}(x),\quad\quad\quad\quad\quad\quad\,\,\,if\,\,\beta>\frac{d}{2}.
\end{array}
\right.
\end{equation*}
we have proved \eqref{C1.1}.
\end{proof}
\begin{remark}\label{R2.2}
For every $\beta>0$, since the kernel $K^{\alpha}(\beta;x,y)$ is bounded by the integrable function $\Phi^{\alpha}(\beta;x-y)$, we obtain that the operator $T_{\alpha}^{\beta}$ is bounded on $L^p({\mathbb{R}_{+}^d})$ for all $p\in[1,\infty]$. Now, by Proposition \ref{FK}, we can proceed to prove that $T_{\alpha}^{\beta}f$ equals to \eqref{R3.1}. It suffices to show
\begin{equation*}
\langle T_{\alpha}^{\beta}f,\varphi_{\mu}^{\alpha} \rangle=(4\arrowvert \mu\arrowvert_1+2\arrowvert\alpha\arrowvert_1+2)^{-\beta}\langle f,\varphi_{\mu}^{\alpha}\rangle.
\end{equation*}
By Proposition \ref{FK} and H\"{o}lder's inequality, 
\begin{equation}
\begin{aligned}
\int_{\mathbb{R}_{+}^d}&\int_{\mathbb{R}_{+}^d}\arrowvert K^{\alpha}(\beta;x,y)\arrowvert \arrowvert f(y)\arrowvert\arrowvert\varphi_{\mu}^{\alpha}(x)\arrowvert dydx\\
&\leq C\int_{\mathbb{R}_{+}^d}\int_{\mathbb{R}_{+}^d}\arrowvert \Phi^{\alpha}(\beta;x-y)\arrowvert \arrowvert f(y)\arrowvert\arrowvert\varphi_{\mu}^{\alpha}(x)\arrowvert dydx\\
&\leq C\lVert f\lVert_{L^2(\mathbb{R}_{+}^d)}\lVert \varphi_{\mu}^{\alpha}\lVert_{L^2(\mathbb{R}_{+}^d)}.
\end{aligned}
\end{equation}
Therefore, by Fubini's theorem,
\begin{equation}
\begin{aligned}
\langle T_{\alpha}^{\beta}f,\varphi_{k}^{\alpha} \rangle&=\int_{\mathbb{R}_{+}^d}\int_{\mathbb{R}_{+}^d} K^{\alpha}(\beta;x,y) f(y)dy\overline{\varphi_{\mu}^{\alpha}(x)}dx\\
&=\int_{\mathbb{R}_{+}^d} f(y)\int_{\mathbb{R}_{+}^d} K^{\alpha}(\beta;x,y)\overline{\varphi_{\mu}^{\alpha}(x)}dxdy\\
&=(4\arrowvert \mu\arrowvert_1+2\arrowvert\alpha\arrowvert_1+2)^{-\beta}\langle f,\varphi_{\mu}^{\alpha}\rangle.
\end{aligned}
\end{equation}
\end{remark}
From Remark \ref{R2.2}, we will no longer distinguish between $T_{\alpha}^{\beta}$ and $(1+\mathcal{L}^{\alpha})^{-\beta}$ hereafter. Now we are ready to prove Lemma \ref{weighted}.\\
\par \textbf{Proof of Lemma \ref{weighted}.}  We will see that the kernel of $(1+\arrowvert x\arrowvert)^{2\beta}(1+\mathcal{L}^{\alpha})^{-\beta}$ satisfies
\begin{equation}\label{P1.1}
(1+ \arrowvert x\arrowvert)^{2\beta}\int_{\mathbb{R}_+^d}K^{\alpha}(\beta;x,y)dy\leq C
\end{equation}
and
\begin{equation}\label{P1.2}
\int_{\mathbb{R}_+^d}(1+ \arrowvert x\arrowvert)^{2\beta}K^{\alpha}(\beta;x,y)dx\leq C,
\end{equation}
where the constant $C$ depends only on $\alpha$ and $\beta$. Thus , $(1+ x)^{2\beta}(1+\mathcal{L}^{\alpha})^{-\beta}$ is bounded on $L^2(\mathbb{R}_{+}^d)$ by the Schur's test (see, for example, \cite[Theorem 6.18]{Folland} ).
\par We deal first with \eqref{P1.1}. From \eqref{C1.1} and \eqref{C1.2} we see that there exists a constant $C$ such that \eqref{P1.1} is valid for all $\arrowvert x\arrowvert<2$.
\par Assume $\arrowvert x\arrowvert<2\arrowvert x-y\arrowvert$. By using \eqref{KP} and estimate \eqref{I3} we obtain 
\begin{equation*}
\begin{aligned}
K^{\alpha}(\beta;x,y)&=\frac{1}{\Gamma(\beta)2^{\frac{d}{2}+\beta-1}}\int_{0}^{1}\zeta_{\beta}(s)e^{-\frac{1}{4}(s+\frac{1}{s})(\arrowvert x\arrowvert^2+\arrowvert y\arrowvert^2)}\prod_{i=1}^{d}\sqrt{\frac{1-s^2}{2s}x_iy_i}I_{\alpha_i}\left(\frac{1-s^2}{2s}x_iy_i\right)\frac{ds}{s}\\
&\leq\frac{1}{\Gamma(\beta)2^{\frac{d}{2}+\beta-1}}\int_{0}^{1}\zeta_{\beta}(s)e^{-\frac{1}{4}s\arrowvert x+y\arrowvert^2-\frac{1}{4s}\arrowvert x-y\arrowvert^2}\frac{ds}{s}\\
&\leq\frac{e^{-\frac{\arrowvert x-y\arrowvert^2}{8}}}{\Gamma(\beta)2^{\frac{d}{2}+\beta-1}}\int_{0}^{1}\zeta_{\beta}(s)e^{-\frac{1}{8}s\arrowvert x+y\arrowvert)^2-\frac{1}{8s}\arrowvert x-y\arrowvert^2}\frac{ds}{s}.
\end{aligned}
\end{equation*}
Therefore, for some constant $C$,
\begin{equation*}
\begin{aligned}
\int_{\{0\leq \arrowvert x\arrowvert<2\arrowvert x-y\arrowvert\}}(1+\arrowvert x\arrowvert)^{2\beta}K^{\alpha}(\beta;x,y)dy\leq\frac{1}{\Gamma(\beta)2^{\frac{d}{2}+\beta-1}}\int_{\mathbb{R}^d}(1+\arrowvert x-y\arrowvert)^{2\beta}e^{-\frac{\arrowvert x-y\arrowvert^2}{8}}\int_{0}^{1}\zeta_{\beta}(s)e^{-\frac{1}{8s}(x-y)^2}\frac{ds}{s}dy,
\end{aligned}
\end{equation*}
which is bounded by a constant independent of $x$.
\par It remains to consider the integral \eqref{P1.1} restricted to the set $E_x=\{y:\arrowvert x\arrowvert>2\arrowvert x-y\arrowvert\}$ when $\arrowvert x\arrowvert>2$. Observe that in this part, due to the identity $\arrowvert x+y\arrowvert^2=2\arrowvert x\arrowvert^2-\arrowvert x-y\arrowvert^2+2\arrowvert y\arrowvert^2$, we have 
\begin{equation}\label{P3.1}
\arrowvert x\arrowvert< \arrowvert x+y\arrowvert.
\end{equation}
As in the proof of Proposition \ref{FK}, we consider $K^{\alpha}(\beta;x,y)=K^{\alpha}_1(\beta;x,y)+K^{\alpha}_2(\beta;x,y)$. Then, by using \eqref{P3.1} and \eqref{P1} we obtain
\begin{equation*}
(1+\arrowvert x\arrowvert)^{2\beta}\int_{E_x}K^{\alpha}_2(\beta;x,y)dy\leq(1+\arrowvert x\arrowvert)^{2\beta}e^{-\frac{1}{8}\arrowvert x\arrowvert^2}\int_{\mathbb{R}^{d}}e^{-\frac{1}{4}\arrowvert x-y\arrowvert^2}dy\leq C,
\end{equation*}
where in the last inequality we have used that for each positive $b$, there exists a constant $C_b$ such that $(1+\arrowvert x\arrowvert)^{b}e^{-\frac{1}{8}\arrowvert x\arrowvert^2}\leq C_b$. In order to handle $K^{\alpha}_1(\beta;x,y)$, from \eqref{3.18}, \eqref{3.19} and \eqref{P3.1}, after some changes of variables we obtain
\begin{equation*}
\begin{aligned}
\int_{E_x}K^{\alpha}_1(\beta;x,y)dy&\leq C\int_{E_x}\int_{0}^{1/2}s^{-d/2+\beta}e^{-\frac{1}{4}\left(s\arrowvert x\arrowvert^2+\frac{1}{s}\arrowvert x-y\arrowvert^2\right)}\frac{ds}{s}dy\\
&=C\arrowvert x\arrowvert^{d-2\beta}\int_{E_x}\int_{0}^{\arrowvert x\arrowvert^2/2}u^{-d/2+\beta}e^{-\frac{1}{4}\left(u+\frac{1}{u}(\arrowvert x\arrowvert\arrowvert x-y\arrowvert)^2\right)}\frac{du}{u}dy\quad u=s\arrowvert x\arrowvert^2\\
&=C\arrowvert x\arrowvert^{d-2\beta}\int_{0}^{\arrowvert x\arrowvert/2}\int_{0}^{\arrowvert x\arrowvert^2/2}u^{-d/2+\beta}e^{-\frac{1}{4}\left(u+\frac{1}{u}(\arrowvert x\arrowvert t)^2\right)}\frac{du}{u}t^{d-1}dt\quad t=\arrowvert x-y\arrowvert\\
&=C\arrowvert x\arrowvert^{-2\beta}\int_{0}^{\arrowvert x\arrowvert^2/2}\int_{0}^{\arrowvert x\arrowvert^2/2}u^{-d/2+\beta}e^{-\frac{1}{4}\left(u+\frac{r^2}{u}\right)}\frac{du}{u}r^{d-1}dr.\quad r=\arrowvert x\arrowvert t
\end{aligned}
\end{equation*}
The last double integral is bounded by
\begin{equation*}
\begin{aligned}
\int_{0}^{\infty}\int_{0}^{\infty}u^{-d/2+\beta}e^{-\frac{1}{4}\left(u+\frac{r^2}{u}\right)}\frac{du}{u}r^{d-1}dr&=\int_{0}^{\infty}u^{-d/2+\beta-1}e^{-\frac{u}{4}}\int_{0}^{\infty}e^{-\frac{r^2}{4u}}r^{d-1}drdu\\
&=\int_{0}^{\infty}u^{\beta-1}e^{-\frac{u}{4}}du\int_{0}^{\infty}e^{-\frac{r^2}{4}}r^{d-1}dr,
\end{aligned}
\end{equation*}
and both integrals clearly converge. Since $\arrowvert x\arrowvert>2$, we have $(1+\arrowvert x\arrowvert)^{2\beta}\leq4^{\beta}\arrowvert x\arrowvert^{2\beta}$, thus we obtain
\begin{equation*}
(1+\arrowvert x\arrowvert)^{2\beta}\int_{E_x}K^{\alpha}_1(\beta;x,y)dy\leq C.
\end{equation*}
Finally we shall prove \eqref{P1.2}. We split
\begin{equation}\label{P1.4}
\int_{\mathbb{R}_+^d}(1+ \arrowvert x\arrowvert)^{2\beta}K^{\alpha}(\beta;x,y)dx=\int_{\{\arrowvert y\arrowvert>2\arrowvert x-y\arrowvert\}}(1+ \arrowvert x\arrowvert)^{2\beta}K^{\alpha}(\beta;x,y)dx+\int_{\{\arrowvert y\arrowvert<2\arrowvert x-y\arrowvert\}}(1+ \arrowvert x\arrowvert)^{2\beta}K^{\alpha}(\beta;x,y)dx.
\end{equation}
Since $\arrowvert x\arrowvert\leq\frac{3}{2}\arrowvert y\arrowvert$ when $\arrowvert y\arrowvert>2\arrowvert x-y\arrowvert$ and the kernel $K^{\alpha}(\beta;x,y)$ is symmetric, the first integral of \eqref{P1.4} is less than or equal to 
\begin{equation*}
\left(\frac{3}{2}\right)^{\beta}(1+\arrowvert y\arrowvert)^{2\beta}\int_{\{\arrowvert y\arrowvert>2\arrowvert x-y\arrowvert\}}K^{\alpha}(\beta;y,x)dx,
\end{equation*}
which can be bounded, in the same way as \eqref{P1.1}, by a constant depending only on $d$, $\alpha$ and $\beta$. For the second term of \eqref{P1.4}, by estimate \eqref{C1.1} and the expression \eqref{C1.2}, we have 
\begin{equation*}
\int_{\{\arrowvert y\arrowvert<2\arrowvert x-y\arrowvert<4\}}(1+ \arrowvert x\arrowvert)^{2\beta}K^{\alpha}(\beta;x,y)dx\leq C\int_{\mathbb{R}^d}\Phi^{\alpha}(\beta;x-y)dx\leq C.
\end{equation*}
On the other hand, since $\arrowvert x\arrowvert\leq 3\arrowvert x-y\arrowvert$ when $\arrowvert y\arrowvert<2\arrowvert x-y\arrowvert$, again by estimate \eqref{C1.1} and the expression \eqref{C1.2}, we obtain
\begin{equation*} 
\begin{aligned}
\int_{\{\arrowvert y\arrowvert<2\arrowvert x-y\arrowvert,2< \arrowvert x-y\arrowvert \}}(1+ \arrowvert x\arrowvert)^{2\beta}K^{\alpha}(\beta;x,y)dx&\leq C\int_{\{2< \arrowvert x-y\arrowvert\}}(1+\arrowvert x-y\arrowvert)^{2\beta}e^{-\frac{\arrowvert x-y\arrowvert^2}{4}}dx\\
&\leq C\int_{\{2< \arrowvert x\arrowvert\}}(1+\arrowvert x\arrowvert)^{2\beta}e^{-\frac{\arrowvert x\arrowvert^2}{4}}dx\leq C.
\end{aligned}
\end{equation*}
Then, we get the desired estimate \eqref{P1.2}. Thus, we have proved Lemma \ref{weighted}. $\hfill{\Box}$
\subsection{Trace lemma for the Laguerre operators}
In this subsection, we establish the following trace lemma, which will be used to prove the square function estimate (Proposition \ref{SP} below).
\begin{lemma}\label{Trace}
For $\beta>1$, there exists a constant $C>0$ such that the estimate
\begin{equation}\label{Trace1.1}
\lVert \mathcal{P}_n^{\alpha}f\rVert_{L^2(\mathbb{R}_{+}^{d})\rightarrow L^2(\mathbb{R}_{+}^{d}, (1+\arrowvert x\arrowvert)^{-\beta})}\leq Cn^{-\frac{1}{4}}
\end{equation}
holds for every $n\in \mathbb{N}$. 
\end{lemma}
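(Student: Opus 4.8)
The plan is to reduce the trace estimate for the spectral projection $\mathcal{P}_n^{\alpha}$ to a pointwise bound on the diagonal of its kernel, exploiting the now-standard observation that a weighted $L^2$ bound of the form $\lVert \mathcal{P}_n^{\alpha} f\rVert_{L^2(w)} \leq C \lVert f\rVert_{L^2}$ is equivalent, by duality and the fact that $\mathcal{P}_n^{\alpha}$ is a self-adjoint projection, to the operator-norm bound $\lVert w^{1/2}\mathcal{P}_n^{\alpha} w^{1/2}\rVert_{L^2\to L^2}\leq C$, where here $w(x)=(1+|x|)^{-\beta}$. Writing $\mathcal{P}_n^{\alpha}=(\mathcal{P}_n^{\alpha})^2$ and using $\langle \mathcal{P}_n^{\alpha} f, \mathcal{P}_n^{\alpha} f\rangle_{L^2(w)} = \langle w\mathcal{P}_n^{\alpha} f, \mathcal{P}_n^{\alpha} f\rangle$, it suffices to show
\[
\int_{\mathbb{R}_+^d}(1+|x|)^{-\beta}K_{\mathcal{P}_n^{\alpha}}(x,x)\,dx \lesssim n^{-1/2},
\]
where $K_{\mathcal{P}_n^{\alpha}}(x,y)=\sum_{|\mu|_1=n}\varphi_\mu^{\alpha}(x)\varphi_\mu^{\alpha}(y)$ is the Christoffel–Darboux-type kernel; indeed $\mathrm{Tr}\big((1+|x|)^{-\beta/2}\mathcal{P}_n^{\alpha}(1+|x|)^{-\beta/2}\big)$ equals the left-hand side, and since this operator is positive its trace norm dominates its operator norm, which is exactly $\lVert \mathcal{P}_n^{\alpha}\rVert_{L^2\to L^2(w)}^2$.

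The main work is therefore the pointwise estimate for $K_{\mathcal{P}_n^{\alpha}}(x,x)=\sum_{|\mu|_1=n}\prod_{j=1}^d \varphi_{\mu_j}^{\alpha_j}(x_j)^2$. I would obtain this by passing through the heat kernel: from the generating-function identity, or directly from \eqref{Kernel}, one has for the one-dimensional Laguerre functions the Hille–Hardy formula
\[
\sum_{k=0}^{\infty} r^k \varphi_k^{\alpha_j}(x_j)^2 = \frac{2r^{-\alpha_j/2-1/4}}{1-r}\,x_j\, e^{-\frac{1}{2}\frac{1+r}{1-r}x_j^2}\, I_{\alpha_j}\!\Big(\frac{2r^{1/2}x_j^2}{1-r}\Big)\cdot r^{(\alpha_j+1/2)/2}\cdot(\text{tidy constant}),
\]
equivalently $K_t^{\alpha}(x,x)$ is the generating function of $K_{\mathcal{P}_n^{\alpha}}(x,x)$ in the variable $e^{-4t}$ up to the exponential factor $e^{-t(2|\alpha|_1+2d)}$. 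Integrating the Gaussian-type bound of Proposition 2.1 — or rather the sharper one-dimensional asymptotics for $\sqrt{z}I_{\nu}(z)$ recalled in \eqref{I3} — gives $\int_{\mathbb{R}_+^d}(1+|x|)^{-\beta}K_t^{\alpha}(x,x)\,dx \lesssim t^{-1/2}$ for $0<t\leq 1$ when $\beta>1$; the decisive gain of the full power $t^{-d/2}$ down to only $t^{-1/2}$ is exactly where the inhomogeneous weight $(1+|x|)^{-\beta}$ with $\beta>1$ is used, because on the region $|x|\gtrsim t^{-1/2}$ the kernel $K_t^{\alpha}(x,x)$ is $O(1)$ and the weight is integrable, while on $|x|\lesssim t^{-1/2}$ one has $K_t^{\alpha}(x,x)\lesssim t^{-d/2}$ on a set of measure $\lesssim t^{-d/2}$, but after inserting $(1+|x|)^{-\beta}$ and integrating in polar-type coordinates the bound collapses to $t^{-1/2}$. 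A Tauberian argument (using positivity of the coefficients $K_{\mathcal{P}_n^{\alpha}}(x,x)\geq 0$ and choosing $t\sim 1/n$, so that $e^{-4tn}\sim 1$) then converts the bound $\sum_n e^{-4tn}\,a_n \lesssim t^{-1/2}$, with $a_n:=\int(1+|x|)^{-\beta}K_{\mathcal{P}_n^{\alpha}}(x,x)\,dx$, into $a_n\lesssim n^{-1/2}$, which is what we need.

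The step I expect to be the main obstacle is the careful splitting of the $x$-integral of $K_t^{\alpha}(x,x)$ into the near and far regimes with the correct book-keeping of the Bessel asymptotics in each coordinate: in the one-dimensional factor the transition between $\sqrt{z}I_{\nu}(z)\sim z^{\nu+1/2}$ near $z=0$ and $\sqrt{z}I_{\nu}(z)\sim (2\pi)^{-1/2}e^z$ near $z=\infty$ happens at $x_j^2 \sim (1-r)/r \sim t$, i.e. at $x_j\sim t^{1/2}$, and one must check that the product over $j=1,\dots,d$ together with the weight depending only on $|x|=(\sum x_j^2)^{1/2}$ still yields the clean one-dimensional gain $t^{-1/2}$ uniformly in $t\in(0,1]$ and not merely $t^{-1/2}$ times logarithmic or dimensional losses. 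This is a direct but somewhat delicate computation; it mirrors, and can be organized exactly like, the region-splitting already carried out in the proof of Lemma \ref{weighted} above, with $\Phi^{\alpha}$ there playing the analogous role of the majorant here. Once that integral bound is in hand, the duality reduction and the Tauberian passage are routine, and the hypothesis $\beta>1$ enters in precisely one place — the convergence of $\int_{|x|\geq 1}(1+|x|)^{-\beta}\,|x|^{d-1}\,d|x|$ in the far regime, after the Gaussian factor has been used to absorb everything except a single power of $|x|$.
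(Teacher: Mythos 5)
There is a genuine gap, and it is fatal for $d\geq 2$. Your reduction ``operator norm $\leq$ trace'' discards exactly the orthogonality that makes the lemma true. The projection $\mathcal{P}_n^{\alpha}$ has rank $\binom{n+d-1}{d-1}\sim n^{d-1}$, so the trace of the positive operator $(1+|x|)^{-\beta/2}\mathcal{P}_n^{\alpha}(1+|x|)^{-\beta/2}$ can exceed its operator norm by a factor of order $n^{d-1}$ --- and it does. Quantitatively, using the asymptotics of Lemma \ref{L}, a typical $\mu$ with $|\mu|_1=n$ has $\prod_j\varphi_{\mu_j}^{\alpha_j}(x_j)^2\sim n^{-d/2}$ on a box of side $\sim\sqrt{n}$, so $\int(1+|x|)^{-\beta}\prod_j\varphi_{\mu_j}^{\alpha_j}(x_j)^2\,dx\sim n^{-\beta/2}$ for $1<\beta<d$; summing over the $\sim n^{d-1}$ indices gives a weighted trace of order $n^{d-1-\beta/2}$, which is $\lesssim n^{-1/2}$ only when $\beta\geq 2d-1$, nowhere near the claimed range $\beta>1$. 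The same computation shows your key claim $\int(1+|x|)^{-\beta}K_t^{\alpha}(x,x)\,dx\lesssim t^{-1/2}$ is false for $d\geq2$: since $K_t^{\alpha}(x,x)\lesssim t^{-d/2}$ on $|x|\lesssim t^{-1/2}$ and the \emph{weighted} measure of that region is $\sim t^{-(d-\beta)/2}$ (not $O(1)$ unless $d=1$), the integral is of order $t^{\beta/2-d}$. Separately, even in $d=1$ the Tauberian step runs the wrong way: from $\sum_m e^{-4tm}a_m\lesssim t^{-1/2}$ with $a_m\geq0$ and $t\sim1/n$ one only gets $a_n\leq e^{4tn}\sum_m e^{-4tm}a_m\lesssim n^{+1/2}$, not $n^{-1/2}$; recovering the individual coefficient at the Abelian rate would require regularity of $a_n$ in $n$ that you have not established.

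The paper's proof keeps the orthogonality instead of trading it for positivity. It first reduces \eqref{Trace1.1}, via a dyadic decomposition in $|x|$ and $\beta>1$, to the local estimate $\int_{[0,M]^d}|\mathcal{P}_n^{\alpha}f|^2\,dx\leq CMn^{-1/2}\|f\|_2^2$. Then it splits $f=\sum_{i=1}^d f_i$ so that on the spectrum of $f_i$ one has $\mu_i\gtrsim n$, enlarges the domain of integration to the full half-line in the remaining $d-1$ coordinates, and uses orthogonality there to kill all cross terms; Parseval then reduces everything to the one-dimensional bound $\int_0^M|\varphi_{\mu_1}^{\alpha_1}|^2\,dx\leq CM\mu_1^{-1/2}$, which follows from the sup bound $|\varphi_{\mu_1}^{\alpha_1}(x)|\lesssim\mu_1^{-1/4}$ of Lemma \ref{L}. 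In effect only \emph{one} coordinate pays for the localization, while the other $d-1$ contribute through exact orthogonality rather than through their diagonal kernel; this is precisely the mechanism your trace reduction cannot see. If you wish to salvage a kernel-based route, you would need to bound the operator norm of $\chi_{[0,M]^d}\mathcal{P}_n^{\alpha}\chi_{[0,M]^d}$ directly (e.g., by Schur-type estimates on the off-diagonal kernel), not its trace.
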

Let $\mathscr{L}_{n}^{a}$ denote the normalized Laguerre function of type $a$, defined by
\begin{equation}\label{Define}
\mathscr{L}_{n}^{\alpha}(x)=\left(\frac{\Gamma(n+1)}{\Gamma(n+\alpha+1)}\right)^{\frac{1}{2}}e^{-\frac{x}{2}}x^{\frac{\alpha}{2}}L_{n}^{\alpha}(x).
\end{equation}
To prove Lemma \ref{Trace}, we use the following result.
\begin{lemma}\label{L}\cite[Theorem 1.5.3]{TS3}. Let $\nu=4n+2a+2$ and $a>-1$.
\begin{equation}\label{L1.1}
\arrowvert\mathscr{L}_{n}^{a}(x)\arrowvert\leq C\left\{\begin{array}{lr}
(x\nu)^{a/2},\qquad\qquad\qquad\quad\quad\qquad\,\,\,\, 0\leq x\leq 1/\nu,\\
(x\nu)^{-1/4}, \qquad\qquad\quad\qquad\quad\qquad\,\, 1/\nu\leq x\leq \nu/2,\\
\nu^{-1/4}(\nu^{1/3}+\arrowvert\nu-x\arrowvert)^{-1/4},\qquad\quad\quad\nu/2\leq x\leq 3\nu/2,\\
e^{-\gamma x},\qquad\qquad\qquad\qquad\qquad\qquad\, x\geq 3\nu/2,
\end{array}
\right.
\end{equation}
where $\gamma>0$ is a constant. Moreover, if $1\leq x\leq\nu-\nu^{1/3}$, we have 
\begin{equation}\label{L1.2}
\mathscr{L}_{n}^{a}(x)=(2/\pi)^{\frac{1}{2}}(-1)^{n}x^{-\frac{1}{4}}(\nu-x)^{-\frac{1}{4}}\cos\left(\frac{\nu(2\theta-\sin 2\theta)-\pi}{4}\right)+O\left(\frac{\nu^{\frac{1}{4}}}{(\nu-x)^{\frac{7}{4}}}+(x\nu)^{-\frac{3}{4}}\right),
\end{equation}
where $\theta=\arccos(x^{1/2}\nu^{-1/2})$.
\end{lemma}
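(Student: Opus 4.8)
\textbf{Proof proposal for Lemma \ref{L} (the Laguerre function asymptotics).}

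The plan is to treat this as a citation-backed restatement of the classical estimates for normalized Laguerre functions due to Askey–Wainger and Muckenhoupt, following the exposition in Thangavelu's lecture notes \cite[Theorem 1.5.3]{TS3}; I would reproduce the argument in enough detail to make the excerpt self-contained. First I would fix $a>-1$ and set $\nu=4n+2a+2$, and recall that $\mathscr{L}_n^a(x)$ solves the Laguerre differential equation, which after the substitution reducing it to a perturbed Bessel / Airy form governs the four regimes appearing in \eqref{L1.1}. The strategy splits $(0,\infty)$ into the four ranges $0\le x\le 1/\nu$, $1/\nu\le x\le \nu/2$, $\nu/2\le x\le 3\nu/2$, and $x\ge 3\nu/2$, matching the behaviour of the relevant comparison function (power/Bessel near the origin, oscillatory cosine in the bulk, Airy near the turning point $x=\nu$, exponential decay past it).

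For the first range I would use the local power-series expansion of $L_n^a$ together with the normalizing constant $\bigl(\Gamma(n+1)/\Gamma(n+a+1)\bigr)^{1/2}\sim n^{-a/2}$ to get the bound $(x\nu)^{a/2}$; here $e^{-x/2}\sim 1$ and $x^{a/2}$ is the dominant factor. For the second range $1/\nu\le x\le\nu/2$, the key is that $\mathscr{L}_n^a$ behaves like a rescaled Bessel function $J_a$: writing $x=t^2/\nu$ one has $\mathscr{L}_n^a(x)\approx c\,\nu^{-1/2}t^{1/2}J_a(t)(1+\text{error})$, and the standard bound $|J_a(t)|\lesssim (1+t)^{-1/2}$ gives $(x\nu)^{-1/4}$. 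For the turning-point range $\nu/2\le x\le 3\nu/2$, I would invoke the Airy-type uniform asymptotics: near $x=\nu$ the function is comparable to $\nu^{-1/3}\mathrm{Ai}\bigl(c\nu^{-1/3}(x-\nu)\bigr)$ up to lower order, and using $|\mathrm{Ai}(s)|\lesssim (1+|s|)^{-1/4}$ one extracts $\nu^{-1/4}(\nu^{1/3}+|\nu-x|)^{-1/4}$. For $x\ge 3\nu/2$ one is in the classically forbidden region: an ODE/WKB comparison (or directly the generating-function estimate) gives exponential decay $e^{-\gamma x}$ for some $\gamma>0$. Finally, for the refined oscillatory formula \eqref{L1.2} on $1\le x\le \nu-\nu^{1/3}$, I would carry out the WKB/Liouville–Green analysis of the Laguerre equation away from both endpoints: the phase is $\tfrac14\bigl(\nu(2\theta-\sin 2\theta)-\pi\bigr)$ with $\theta=\arccos(\sqrt{x/\nu})$, the amplitude is $x^{-1/4}(\nu-x)^{-1/4}$ up to the constant $(2/\pi)^{1/2}$ and sign $(-1)^n$, and the error term $O\bigl(\nu^{1/4}(\nu-x)^{-7/4}+(x\nu)^{-3/4}\bigr)$ comes from bounding the Volterra remainder of the Liouville–Green iteration (the two pieces reflecting, respectively, proximity to the turning point and proximity to the origin).

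The main obstacle is the turning-point region $\nu/2\le x\le 3\nu/2$ and the matching of the Airy asymptotics with the oscillatory regime on one side and the exponential regime on the other: one must produce uniform estimates in $x$ that do not blow up as $x\to\nu$, which requires the nontrivial uniform Airy approximation rather than a naive stationary-phase bound, and one must control the transition constants so that the four pieces of \eqref{L1.1} glue into a single bound. Since all of this is classical and carried out in full in \cite{TS3} (see also Askey–Wainger and Muckenhoupt), I would state that the proof is as in \cite[Theorem 1.5.3]{TS3} and merely indicate the above decomposition, rather than reproduce the lengthy uniform-asymptotics computation; the reader is referred there for complete details.
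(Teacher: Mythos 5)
The paper gives no proof of this lemma at all: it is stated purely as a citation of \cite[Theorem 1.5.3]{TS3}, exactly as you ultimately propose to do. Your sketch of the underlying four-regime Bessel/Airy/WKB analysis is a reasonable and accurate summary of the classical argument, but since both you and the paper defer the actual proof to Thangavelu's book, the approaches coincide.
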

We now prove Lemma~\ref{Trace}. The approach follows a similar argument to \cite[Proof of Lemma 1.5]{CD}.
\begin{proof}
We begin with a local $L^2$ estimate for $\mathcal{P}_n^{\alpha}$. It suffices to prove \eqref{Trace1.1} by establishing
\begin{equation}\label{Proof1.1}
\int_{[0,M]^d} \left\lvert \mathcal{P}_n^{\alpha} f(x) \right\rvert^2  dx \leq C M n^{-\frac{1}{2}} \lVert f \rVert_2^2
\end{equation}
for every $M \geq 1$. Then \eqref{Trace1.1} follows by decomposing $\mathbb{R}_+^d$ into dyadic shells and applying \eqref{Proof1.1} to each, using $\beta > 1$.

To prove \eqref{Proof1.1}, we decompose the Laguerre expansion as follows:
\begin{equation}\label{Proof1.2}
f = \sum_{i=1}^{d} f_i(x)
\end{equation}
where $f_1, \dots, f_d$ are pairwise orthogonal and, for each $1 \leq i \leq d$, $\mu_i \geq |\mu|/d$ whenever $\langle f_i, \varphi_\mu^{\alpha} \rangle \neq 0$ (see \cite{CD}). Note that $\mu_i \sim |\mu|$ when $\langle f_i, \varphi_\mu^{\alpha} \rangle \neq 0$. Thus, to establish \eqref{Proof1.1}, it suffices to show for each $1 \leq i \leq d$ and $M > 0$ that
\begin{equation}\label{Proof1.3}
\int_{[0,M]^d} \left\lvert \mathcal{P}_n^{\alpha} f_i(x) \right\rvert^2  dx \leq C M \sum_{|\mu|_1 = n} \mu_i^{-\frac{1}{2}} \left\lvert \langle f_i, \varphi_\mu^{\alpha} \rangle \right\rvert^2.
\end{equation}
By symmetry, we need only prove \eqref{Proof1.3} for $i=1$. Set $c(\mu) = \langle f_1, \varphi_\mu^{\alpha} \rangle$. Then
\begin{equation}
\left\lvert \mathcal{P}_n^{\alpha} f_1(x) \right\rvert^2 = \sum_{|\mu|_1 = n} \sum_{|\nu|_1 = n} c(\mu) \overline{c(\nu)} \prod_{i=1}^{d} \varphi_{\mu_i}^{\alpha_i}(x_i) \varphi_{\nu_i}^{\alpha_i}(x_i).
\end{equation}
By Fubini's theorem,
\begin{equation}
\int_{[0,M]^d} \left\lvert \mathcal{P}_n^{\alpha} f_1(x) \right\rvert^2  dx \leq \sum_{|\mu|_1 = n} \sum_{|\nu|_1 = n} c(\mu) \overline{c(\nu)}  \int_0^{M} \varphi_{\mu_1}^{\alpha_1}(x_1) \varphi_{\nu_1}^{\alpha_1}(x_1)  dx_1 \prod_{i=2}^{d}  \langle \varphi_{\mu_i}^{\alpha_i}, \varphi_{\nu_i}^{\alpha_i} \rangle .
\end{equation}
Since the $\varphi_{\mu_i}^{\alpha_i}$ are orthogonal, $\mu_i = \nu_i$ for $i=2,\dots,d$ whenever $\langle \varphi_{\mu_i}^{\alpha_i}, \varphi_{\nu_i}^{\alpha_i} \rangle \neq 0$. This implies $\mu_1 = \nu_1$ as $|\mu|_1 = |\nu|_1 = n$. Thus,
\begin{equation}
\int_{[0,M]^d} \left\lvert \mathcal{P}_n^{\alpha} f_1(x) \right\rvert^2  dx \leq \sum_{|\mu|_1 = n} |c(\mu)|^2 \int_0^{M} \left\lvert \varphi_{\mu_1}^{\alpha_1}(x_1) \right\rvert^2  dx_1.
\end{equation}
To obtain the desired estimate, it suffices to show
\begin{equation}\label{int_est}
\int_0^{M} \left\lvert \varphi_{\mu_1}^{\alpha_1}(x) \right\rvert^2  dx \leq C M \mu_1^{-1/2}.
\end{equation}
When $\mu_1 < M^2$, the estimate holds trivially since $\lVert \varphi_{\mu_1}^{\alpha_1} \rVert_2 = 1$. For $\mu_1 > M^2$, since $\varphi_{\mu_1}^{\alpha_1}(x) = \sqrt{2x} \mathscr{L}_{\mu_1}^{\alpha_1}(x^2)$, Lemma~\ref{L} yields a constant $C > 0$ such that $|\varphi_{\mu_1}^{\alpha_1}(x)| \leq C \mu_1^{-1/4}$ for all $x \in [0, M]$. Thus
\[
\int_0^{M} \left\lvert \varphi_{\mu_1}^{\alpha_1}(x) \right\rvert^2  dx \leq C \mu_1^{-1/2} \int_0^M  dx = C M \mu_1^{-1/2},
\]
completing the proof of Lemma~\ref{Trace}.
\end{proof}
To make the estimate \eqref{Trace1.1} applicable to the proof of the square function estimate, we require an extended version of \eqref{Trace1.1}. For any function $F$ supported on $[0,1]$ and for $2 \leq q < \infty$, we define (see \cite[(2.6)]{CD})
\begin{equation*}
\lVert F\rVert_{N^2,q}:=\left(\frac{1}{N^2}\sum_{i=1}^{N^2}\sup_{\xi\in[\frac{i-1}{N^2},\frac{i}{N^2})}\arrowvert F(x)\arrowvert ^{q}\right)^{1/q},\quad\quad N\in \mathbb{N}.
\end{equation*}
Originally appearing in \cite{CSA, DOS} in the context of spectral multipliers, the norm $\lVert F\rVert_{N^{2},q}$ was developed to handle compact self-adjoint operators cases. By using the same argument as in \cite{CD}, we obtain the following extended result.
\begin{lemma}\label{ETrace}
For $N\in \mathbb{N}$, let $F$ be a function supported in $[N/4,N]$ and for all $f\in L^2(\mathbb{R}_{+}^d)$. 
If $\beta>1$, then there exists a constant $C_\beta$ independent of $F$ and $f$ such that
\begin{equation}\label{ETrace1}
\int_{\mathbb{R}_{+}^{d}}\arrowvert F(\sqrt{\mathcal{L}^{\alpha}})f(x)\arrowvert^2(1+\arrowvert x\arrowvert)^{-\beta}dx\leq C_\beta N\lVert \delta_NF\rVert_{N^2,2}^2\int_{\mathbb{R}_{+}^{d}}\arrowvert f(x)\arrowvert ^2dx.
\end{equation}
If $0<\beta\leq1$, then for every $\varepsilon>0$, there exists a constant $C_{\beta,\varepsilon}$ independent of $F$ and $f$ such that
\begin{equation}\label{ETrace2}
\int_{\mathbb{R}_{+}^{d}}\arrowvert F(\sqrt{\mathcal{L}^{\alpha}})f(x)\arrowvert^2(1+\arrowvert x\arrowvert)^{-\beta}dx\leq C_{\beta,\varepsilon} N^{\frac{\beta}{1+\varepsilon}}\lVert \delta_NF\rVert_{N^2,2\beta^{-1}(1+\varepsilon)}^2\int_{\mathbb{R}_{+}^{d}}\arrowvert f(x)\arrowvert ^2dx.
\end{equation}
Where $\delta_{N}F(x)$ is defined by $F(Nx)$.
\end{lemma}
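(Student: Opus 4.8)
The plan is to reduce both bounds to the single‑projection trace estimate of Lemma~\ref{Trace}, using only the fact that the spectrum of $\mathcal{L}^{\alpha}$ is $\{e_n\}=\{4n+2|\alpha|_1+2d\}$, so that $\sqrt{e_n}$ is essentially a linear grid of spacing $\sim 1/N$ near scale $N$. Write $\xi_n=\sqrt{e_n}$, so $F(\sqrt{\mathcal{L}^{\alpha}})=\sum_n F(\xi_n)\mathcal{P}_n^{\alpha}$ where, by the support of $F$, the sum runs over the finitely many $n$ with $\xi_n\in[N/4,N]$; for all such $n$ one has $n\sim N^2$, and since $\xi_{n+1}-\xi_n=4/(\xi_{n+1}+\xi_n)\ge 2/N$ on this range, each of the $N^2$ intervals $J_i=[(i-1)/N,i/N)$ contains at most one $\xi_n$. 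Hence for every $q$ one gets the combinatorial bound $\sum_n|F(\xi_n)|^{q}\le\sum_i\sup_{J_i}|F|^{q}=N^2\|\delta_N F\|_{N^2,q}^{q}$, which is all that will be needed about $\mathcal{L}^{\alpha}$ beyond Lemma~\ref{Trace}.

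For \eqref{ETrace1} ($\beta>1$) I would argue directly. Since $\mathcal{P}_n^{\alpha}=(\mathcal{P}_n^{\alpha})^2$, Lemma~\ref{Trace} applied to $\mathcal{P}_n^{\alpha}f$ gives $\|\mathcal{P}_n^{\alpha}f\|_{L^2((1+|x|)^{-\beta})}\lesssim n^{-1/4}\|\mathcal{P}_n^{\alpha}f\|_2\sim N^{-1/2}\|\mathcal{P}_n^{\alpha}f\|_2$ for the relevant $n$. Then the triangle inequality in $L^2(\mathbb{R}_{+}^{d},(1+|x|)^{-\beta})$, Cauchy--Schwarz in $n$, the orthogonality $\sum_n\|\mathcal{P}_n^{\alpha}f\|_2^2=\|f\|_2^2$, and the combinatorial bound with $q=2$ yield
\[
\|F(\sqrt{\mathcal{L}^{\alpha}})f\|_{L^2((1+|x|)^{-\beta})}\lesssim N^{-1/2}\Bigl(\sum_n|F(\xi_n)|^2\Bigr)^{1/2}\|f\|_2\lesssim N^{1/2}\|\delta_N F\|_{N^2,2}\|f\|_2,
\]
which is \eqref{ETrace1}; the constant is the one from Lemma~\ref{Trace} and degenerates as $\beta\downarrow 1$.

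For \eqref{ETrace2} ($0<\beta\le 1$) I would interpolate. Fix $\varepsilon>0$, put $\theta=\beta/(1+\varepsilon)\in(0,1)$, and, after normalizing so that $\|\delta_N F\|_{N^2,2\beta^{-1}(1+\varepsilon)}=1$ (hence $\sum_n|F(\xi_n)|^{2(1+\varepsilon)/\beta}\le N^2$ by the combinatorial bound), consider the analytic family of finite‑rank operators $T_z=\sum_n F(\xi_n)|F(\xi_n)|^{(z-\theta)/\theta}\mathcal{P}_n^{\alpha}$ for $0\le\mathrm{Re}\,z\le1$, so that $T_\theta=F(\sqrt{\mathcal{L}^{\alpha}})$. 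On $\mathrm{Re}\,z=0$ the coefficients have modulus $|F(\xi_n)|^{0}\le1$, so $\|T_{it}\|_{L^2\to L^2}\le1$. On $\mathrm{Re}\,z=1$ the coefficients have modulus $|F(\xi_n)|^{1/\theta}$; realizing $T_{1+it}$ as $\widetilde F(\sqrt{\mathcal{L}^{\alpha}})$ for a staircase function $\widetilde F$ supported in $[N/4,N]$ taking these values at the $\xi_n$, one has $\|\delta_N\widetilde F\|_{N^2,2}^2\le N^{-2}\sum_n|F(\xi_n)|^{2/\theta}\le1$, so \eqref{ETrace1} with weight exponent $1+\varepsilon$ gives $\|T_{1+it}\|_{L^2\to L^2((1+|x|)^{-(1+\varepsilon)})}\lesssim_{\varepsilon} N^{1/2}$. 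Because $[\,L^2,\,L^2((1+|x|)^{-(1+\varepsilon)})\,]_\theta=L^2((1+|x|)^{-(1+\varepsilon)\theta})=L^2((1+|x|)^{-\beta})$, Stein's interpolation theorem — whose admissibility is immediate since $T_z$ is a fixed finite sum with $\|T_z\|_{L^2\to L^2}$ bounded on the closed strip and independent of $\mathrm{Im}\,z$ — gives $\|F(\sqrt{\mathcal{L}^{\alpha}})\|_{L^2\to L^2((1+|x|)^{-\beta})}\lesssim_{\beta,\varepsilon}N^{\theta/2}$; squaring and undoing the normalization yields \eqref{ETrace2}.

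The point that has to be handled carefully — and where the $\varepsilon$‑loss is forced — is the design of the analytic family: the exponent $(z-\theta)/\theta$ is the unique choice that makes the line $\mathrm{Re}\,z=0$ give the $L^2\to L^2$ bound for free while simultaneously placing the line $\mathrm{Re}\,z=1$ in the exact regime of \eqref{ETrace1}, which requires the weight exponent to be strictly greater than $1$; one cannot set it equal to $1$ because the constant in \eqref{ETrace1} (arising from the dyadic‑shell summation in Lemma~\ref{Trace}) blows up there. The only other nuisance, the vanishing of $F(\xi_n)$ for some $n$, is harmless: set the corresponding coefficient of $T_z$ identically zero.
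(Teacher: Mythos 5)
Your proof is correct, and it is more self-contained than the paper's, which simply defers to \cite[pp.~12--15]{CD}. The skeleton is the same as the one the paper points to: \eqref{ETrace1} is deduced from the single-projection bound \eqref{Trace1.1} together with the spectral gap $\xi_{n+1}-\xi_n=4/(\xi_{n+1}+\xi_n)\gtrsim 1/N$, which is exactly what makes $\lVert\delta_NF\rVert_{N^2,q}$ the right normalization (your combinatorial bound $\sum_n|F(\xi_n)|^q\le N^2\lVert\delta_NF\rVert_{N^2,q}^q$), and \eqref{ETrace2} is obtained by interpolating \eqref{ETrace1} against the trivial $L^2\to L^2$ bound. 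Where you genuinely diverge is the interpolation machinery: \cite{CD} treats $(F,f)\mapsto F(\sqrt{\mathcal{L}^{\alpha}})f$ as a bilinear map and invokes a real bilinear interpolation theorem, whereas you normalize $F$, embed $F(\sqrt{\mathcal{L}^{\alpha}})$ into the analytic family $T_z=\sum_n F(\xi_n)|F(\xi_n)|^{(z-\theta)/\theta}\mathcal{P}_n^{\alpha}$, and apply Stein's theorem with Stein--Weiss change of measure $[L^2,L^2((1+|x|)^{-(1+\varepsilon)})]_\theta=L^2((1+|x|)^{-\beta})$. This is the standard linearization of the bilinear argument; it buys a completely explicit, finite-rank verification of admissibility (the coefficients have modulus $|F(\xi_n)|^{\mathrm{Re}\,z/\theta}$, uniformly bounded on the strip), at the cost of having to check that the $\mathrm{Re}\,z=1$ bound is uniform in $\mathrm{Im}\,z$ --- which you correctly observe holds because \eqref{ETrace1} only sees $|F(\xi_n)|$. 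Both routes produce the same unavoidable $\varepsilon$-loss, since the endpoint weight exponent must stay strictly above $1$. Two cosmetic points: $T_{1+it}$ is not $\widetilde F(\sqrt{\mathcal{L}^{\alpha}})$ for a real $\widetilde F$ but for a complex staircase whose modulus is $t$-independent (harmless, as you note), and the case $n=0$ (possible only when $N\lesssim 1$) should be absorbed into the constant rather than fed into $n^{-1/4}$.
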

\begin{proof} As the proof in \cite{CD} extends to more general cases involving compact operators with spectral projection estimation \eqref{Trace1.1}, we omit the details. Crucially, the validity of \eqref{ETrace2} follows by combining \eqref{ETrace1} with a bilinear interpolation theorem. For full technical discussion, see \cite[pp. 12–15]{CD}; we omit the derivation here.
\end{proof}
\section{Proof of Theorem \ref{T1.2} }\label{Section3}
In this section, we reduce Theorem \ref{T1.2} to showing a square function estimate.
\subsection{Square function estimate}
We begin by recalling that
\begin{equation}\label{SI}
S_{*}^{\lambda}(\mathcal{L}^{\alpha})f(x)\leq C_{\lambda,\rho}\sup_{0<R<\infty}\left(\frac{1}{R}\int_0^{R}\arrowvert S_t^{\rho}(\mathcal{L}^{\alpha})f(x)\arrowvert^2dt\right)^{1/2}
\end{equation}
provided that $\rho>-1/2$ and $\lambda>\rho +1/2$. This was shown in \cite[pp.278-279]{SW} (see also \cite[p.13]{CD}). Via a dyadic decomposition, we write $x_{+}^{\rho}=\sum_{k\in\mathbb{Z}}2^{-k\rho}\phi^{\rho}(2^kx)$ for some $\phi^{\rho}\in C_{c}^{\infty}(2^{-3},2^{-1})$. Thus
\begin{equation*}
(1-\arrowvert \xi\arrowvert^2)_{+}^{\rho}=:\phi_0^{\rho}(\xi)+\sum_{k=1}^{\infty}2^{-k\rho}\phi_{k}^{\rho}(\xi),
\end{equation*}
where $\phi_{k}^{\rho}=\phi^{\rho}(2^k(1-\arrowvert \xi\arrowvert^2)), k\geq1$. We also note that supp\,$\phi_{0}^{\rho}\subseteq\{\xi:\arrowvert\xi\arrowvert\leq 7\times 2^{-3}\}$ and supp\,$\phi_{k}^{\rho}\subseteq\{\xi:1-2^{-1-k}\leq\arrowvert\xi\arrowvert\leq 1-2^{-k-3}\}$. Using \eqref{SI}, for $\lambda>\rho+1/2$ we have
\begin{equation}\label{S1}
S_{*}^{\lambda}(\mathcal{L}^{\alpha})f(x)\leq C\left(\sup_{0<R<\infty}\frac{1}{R}\int_0^{R}\arrowvert \phi_{0}^{\rho}(t^{-1}\sqrt{\mathcal{L}^{\alpha}})f(x)\arrowvert^2dt\right)^{1/2}+C\sum_{k=1}^{\infty}2^{-k\rho}\left(\int_0^{\infty}\arrowvert \phi_{k}^{\rho}(t^{-1}\sqrt{\mathcal{L}^{\alpha}})f(x)\arrowvert^2\frac{dt}{t}\right)^{1/2}.
\end{equation}
Next, we define the square function $\mathfrak{S}_{\delta}$ by
\begin{equation*}\label{S2}
\mathfrak{S}_{\delta}f(x)=\left(\int_0^{\infty}\arrowvert \phi\left(\delta^{-1}\left(1-\frac{\mathcal{L}^{\alpha}}{t^2}\right)\right)f(x)\arrowvert^2\frac{dt}{t}\right)^{1/2}\quad,\quad 0<\delta<\frac{1}{2},
\end{equation*}
where $\phi$ is a fixed $C^{\infty}$ function supported in $[2^{-3},2^{-1}]$ with $\arrowvert \phi\arrowvert\leq 1$.
The proof of the sufficiency part of Theorem \ref{T1.2} now reduces to proving the following proposition, which is a weighted $L^2$-estimate for the square function $\mathfrak{S}_{\delta}$.
\begin{proposition}\label{SP}
Fix $\alpha\in[-1/2,\infty)^d$, let $0<\delta<1/2$, $0<\varepsilon\leq1/2$, and let $0\leq\beta<d$. Then, there exists a constant $C>0$, independent of $f$ and $\delta$, such that
\begin{equation}\label{S3}
\int_{\mathbb{R}_{+}^{d}}\arrowvert\mathfrak{S}_{\delta}f(x)\arrowvert^2(1+\arrowvert x\arrowvert)^{-\beta}dx\leq C\delta A_{\beta,d}^{\varepsilon}(\delta)\int_{\mathbb{R}_{+}^{d}}\arrowvert f(x)\arrowvert^2(1+\arrowvert x\arrowvert)^{-\beta}dx,
\end{equation}
where
\begin{equation}\label{S4}
A_{\beta,d}^{\varepsilon}(\delta):=\left\{\begin{array}{lr}
\delta^{-\varepsilon},\qquad\quad 0\leq \beta\leq 1,\quad if \quad d=1,\\
\delta^{\frac{1}{2}-\frac{\beta}{2}},\,\,\qquad 1< \beta< d,\quad if\quad d\geq 2.
\end{array}
\right.
\end{equation}
\end{proposition}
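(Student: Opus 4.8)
The reduction of Theorem~\ref{T1.2} to Proposition~\ref{SP} having been carried out above, it remains to prove \eqref{S3}; the plan is to follow the scheme of \cite{CD}, now available for all $\alpha\in[-1/2,\infty)^d$ thanks to Lemmas~\ref{weighted} and~\ref{ETrace}. Since $0\le\beta<d$ the weight $(1+|x|)^{-\beta}$ lies in $A_2$, so Proposition~\ref{PL} holds in $L^2(\mathbb{R}_+^d,(1+|x|)^{-\beta})$, and the multiplier $\phi(\delta^{-1}(1-\mathcal{L}^\alpha/t^2))$ is spectrally supported where $\sqrt{\mathcal{L}^\alpha}\in[t(1-\tfrac\delta2)^{1/2},t(1-\tfrac\delta8)^{1/2}]\subset[t/2,t]$. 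Taking a fattened Littlewood--Paley piece $\Psi_k$ adapted to $[2^{k-3},2^{k+1}]$ and decomposing $\mathfrak{S}_\delta f$ accordingly, Proposition~\ref{PL} reduces \eqref{S3} to the single-block estimate
\begin{equation}\label{eq:block}
\int_{N/2}^{N}\bigl\|\phi\bigl(\delta^{-1}(1-\mathcal{L}^\alpha/t^2)\bigr)g\bigr\|_{L^2((1+|x|)^{-\beta})}^2\frac{dt}{t}\le C\,\delta\,A_{\beta,d}^\varepsilon(\delta)\,\|g\|_{L^2((1+|x|)^{-\beta})}^2,
\end{equation}
to be proved uniformly in $N=2^k$ (necessarily $N\gtrsim1$, since the spectrum of $\mathcal{L}^\alpha$ is contained in $[2|\alpha|_1+2d,\infty)$) for $g=\Psi_k(\sqrt{\mathcal{L}^\alpha})f$; one then sums over $k$ using Proposition~\ref{PL} for the $\Psi_k$.

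I would split \eqref{eq:block} at $N\sim\delta^{-1/2}$, the scale at which the spectral window $\{n:1-e_n/t^2\in\mathrm{supp}\,\phi(\delta^{-1}\,\cdot\,)\}$ ceases to be a singleton. For the low-frequency blocks $N\lesssim\delta^{-1/2}$, at most one eigenvalue is active for each $t\in[N/2,N)$, so $\phi(\delta^{-1}(1-\mathcal{L}^\alpha/t^2))g=\phi(\delta^{-1}(1-e_{n(t)}/t^2))\mathcal{P}_{n(t)}^\alpha g$, and since each fixed $e_n$ is hit for a set of $t$ of $\tfrac{dt}{t}$-measure $\sim\delta$, the left-hand side of \eqref{eq:block} is $\lesssim\delta\sum_{e_n\sim N^2}\|\mathcal{P}_n^\alpha g\|_{L^2((1+|x|)^{-\beta})}^2$. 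The desired bound is then extracted from Lemma~\ref{weighted}, using that $e_n\lesssim\delta^{-1}$ on this range, so that $(1+\mathcal{L}^\alpha)^{-\beta}$ is comparable to $\delta^{\beta}$ there: this is exactly the point where the discreteness of the spectrum and the gain of \emph{two} powers of $(1+|x|)$ afforded by $(1+\mathcal{L}^\alpha)^{-\beta}$ are used, playing the role that Hardy's inequality plays in \cite{CD} but without the restriction $\alpha_j\ge3/2$. I will also use repeatedly that an eigenfunction of eigenvalue $\sim N^2$ is concentrated in $\{|x|\lesssim N\}$, so that $\|h\|_{L^2}\lesssim N^{\beta}\|h\|_{L^2((1+|x|)^{-\beta})}$ for $h$ of $\sqrt{\mathcal{L}^\alpha}$-frequency $\sim N$ — itself a consequence of Lemma~\ref{weighted}.

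For the high-frequency blocks $N\gtrsim\delta^{-1/2}$ I would partition $[N/2,N)$ into $\sim\delta^{-1}$ intervals $I_l$ of length $\sim N\delta$; for $t\in I_l$ each symbol $F_t(s)=\phi(\delta^{-1}(1-s^2/t^2))$ is supported in one fixed interval of length $\sim N\delta$, so $\phi(\delta^{-1}(1-\mathcal{L}^\alpha/t^2))g=\phi(\delta^{-1}(1-\mathcal{L}^\alpha/t^2))\Pi_l g$ for the spectral projection $\Pi_l$ onto that interval, and the $\Pi_l$ have bounded overlap, whence $\sum_l\|\Pi_l g\|_{L^2}^2\lesssim\|g\|_{L^2}^2$. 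Applying the extended trace lemma (Lemma~\ref{ETrace}) to each $F_t$ — estimate~\eqref{ETrace1} when $d\ge2$ (so $\beta>1$) and estimate~\eqref{ETrace2} when $d=1$ (so $0<\beta\le1$), which is precisely what produces the two shapes of $A_{\beta,d}^\varepsilon(\delta)$ — together with $\|\delta_N F_t\|_{N^2,q}\lesssim\delta^{1/q}$ in this range, the orthogonality of the $\Pi_l$, and the conversion $\|\cdot\|_{L^2}\lesssim N^{\beta}\|\cdot\|_{L^2((1+|x|)^{-\beta})}$ on frequency-$N$ functions, then yields \eqref{eq:block}.

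I expect the high-frequency estimate to be the main obstacle: because Lemma~\ref{ETrace} carries the \emph{unweighted} norm on its right-hand side, one must exploit the $t$-orthogonality through the fine projections $\Pi_l$ and the frequency localization delicately enough that, after passing back to the weight $(1+|x|)^{-\beta}$ and summing the dyadic blocks, the powers of $N$ cancel and the net loss is exactly $A_{\beta,d}^\varepsilon(\delta)$, \emph{uniformly} in $N$; in the case $d=1$ this means carrying the auxiliary parameter $\varepsilon$ and the mixed norm $\|\cdot\|_{N^2,2\beta^{-1}(1+\varepsilon)}$ of~\eqref{ETrace2} all the way through the computation. A secondary task is to verify that the fattened Littlewood--Paley cut-offs and the window projections $\Pi_l$ are bounded on $L^2((1+|x|)^{-\beta})$ with constants independent of $\delta$ and $N$, which follows from $(1+|x|)^{-\beta}\in A_2$ together with the Gaussian heat-kernel bounds recorded in Section~\ref{Section2}.
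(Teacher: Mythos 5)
Your overall architecture (splitting at $t\sim\delta^{-1/2}$, Littlewood--Paley reduction to single dyadic blocks, the trace lemma, and Lemma \ref{weighted}) matches the paper's, and the low-frequency half is close to correct; but even there the mechanism you state does not quite close. Bounding $\sum_{e_n\sim N^2}\lVert\mathcal{P}_n^{\alpha}g\rVert_{L^2((1+|x|)^{-\beta})}^2$ trivially by $\lVert g\rVert_2^2$ and converting back via $\lVert g\rVert_2\lesssim N^{\beta/2}\lVert g\rVert_{L^2((1+|x|)^{-\beta})}$ (note: the power is $\beta/2$, not $\beta$) yields $\delta\cdot\delta^{-\beta/2}$, which misses the target $\delta\cdot\delta^{(1-\beta)/2}$ by a factor $\delta^{-1/2}$. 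You must also insert the trace-lemma gain $\lVert\mathcal{P}_n^{\alpha}\rVert_{L^2\to L^2((1+|x|)^{-\beta})}\lesssim n^{-1/4}$ from Lemma \ref{Trace} (or its extension \eqref{ETrace1}--\eqref{ETrace2}) before converting; this supplies the extra $N^{-1}$, and is exactly what the paper's estimate \eqref{Claim} accomplishes in combination with the dual form of \eqref{weighted1.1}. The comparability ``$(1+\mathcal{L}^{\alpha})^{-\beta}\sim\delta^{\beta}$'' alone cannot produce $A_{\beta,d}^{\varepsilon}(\delta)$.

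The genuine gap is the high-frequency part. With the tools you list the computation is forced: \eqref{ETrace1} gives $\lVert F_t(\sqrt{\mathcal{L}^{\alpha}})\Pi_l g\rVert_{L^2((1+|x|)^{-\beta})}^2\lesssim N\delta\,\lVert\Pi_l g\rVert_2^2$, the $t$-integration over $I_l$ contributes one more factor $\delta$, orthogonality of the $\Pi_l$ returns $\lVert g\rVert_2^2$, and converting that back to the weighted norm costs $N^{\beta}$; the total is $N^{1+\beta}\delta^2$, which equals the target $\delta\cdot\delta^{(1-\beta)/2}$ only at the threshold $N=\delta^{-1/2}$ and diverges like $(N\sqrt{\delta})^{1+\beta}$ as $N\to\infty$. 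No refinement of the $\Pi_l$ orthogonality repairs this: the loss occurs when you replace $\lVert g\rVert_{L^2((1+|x|)^{-\beta})}$ by $\lVert g\rVert_2$ globally, since a frequency-$N$ function may concentrate where $|x|\sim N$ and the weight is $\sim N^{-\beta}$. The missing idea --- and the reason Lemma \ref{Finit} appears in Section \ref{Section2} --- is the Carbery--Rubio de Francia--Vega spatial localization used by the paper: write $\phi_{\delta}=\sum_{j\ge j_0}\phi_{\delta,j}$ with $\widehat{\phi_{\delta,j}}$ supported at scale $2^{j}$, so that by finite speed of propagation the kernel of $\phi_{\delta,j}(t^{-1}\sqrt{\mathcal{L}^{\alpha}})$ is supported within distance $c_02^{j+1}/t$ of the diagonal; tile $\mathbb{R}_+^d$ by cubes $Q_m$ of that side length, on each of which (for $m\notin M_0$) the weight $(1+|x|)^{-\beta}$ is essentially constant, so weighted and unweighted $L^2$ norms are interchanged at no cost in $N$; and reserve the trace lemma for the boundedly many cubes near the origin, where the weight comparison costs only $2^{(j-k)\beta}$, absorbed by the rapid decay $2^{(j_0-j)N}$ of $\phi_{\delta,j}$. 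Without this spatial decomposition the block estimate you propose cannot be proved uniformly in $N$.
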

Once we have the estimate \eqref{S3}, the proof of Theorem \ref{T1.2} follows from the argument outlined below. \\
\par\textbf{Proof of Theorem \ref{T1.2}.}
 Since $\lambda>0$, we choose $\eta>0$ (to be taken arbitrarily small later) such that $\lambda-\eta>0$. Setting $\rho=\lambda-\eta-\frac{1}{2}$, we can now apply \eqref{S1}. First, we estimate the right-hand side's first term in \eqref{S1}. Since $(1+\arrowvert x\arrowvert)^{-\beta}$ is an $A_2$ weight and the heat kernel of $\mathcal{L}^{\alpha}$ satisfies the Gaussian bound, the argument in \cite[Lemma 3.1]{CLSL} yields
\begin{equation}
\left\lVert \sup_{0<t<\infty}\arrowvert \phi_{0}^{\rho}(t^{-1}\sqrt{\mathcal{L}^{\alpha}})f\arrowvert \right\rVert_{L^2(\mathbb{R}_{+}^d,(1+\arrowvert x\arrowvert)^{-\beta})}\leq \lVert \mathscr{M} f\rVert_{L^2(\mathbb{R}_{+}^d,(1+\arrowvert x\arrowvert)^{-\beta})}\leq C\lVert f\rVert_{L^2(\mathbb{R}_{+}^d,(1+\arrowvert x\arrowvert)^{-\beta})}.
\end{equation} 
where $\mathscr{M}$ is the Hardy-Littlewood maximal operator. Thus, it suffices to show that the operator
\begin{equation*}
\sum_{k=1}^{\infty}2^{-k\rho}\left(\int_0^{\infty}\arrowvert \phi_{k}^{\rho}(t^{-1}\sqrt{\mathcal{L}^{\alpha}})f(x)\arrowvert^2\frac{dt}{t}\right)^{1/2}
\end{equation*}
is bounded on $L^2(\mathbb{R}_{+}^d,(1+\arrowvert x\arrowvert)^{-\beta})$. If $d= 1$, then using Minkowski's inequlaity and the estimate \eqref{S3} with $\delta=2^{-k}$, we obtain, by our choice of $\rho$,
\begin{equation*}
\begin{aligned}
&\left\lVert\sum_{k=1}^{\infty}2^{-k\rho}\left(\int_0^{\infty}\arrowvert \phi_{k}^{\rho}(t^{-1}\sqrt{\mathcal{L}^{\alpha}})f(x)\arrowvert^2\frac{dt}{t}\right)^{1/2}\right\rVert_{L^2(\mathbb{R}_{+}^d,(1+\arrowvert x\arrowvert)^{-\beta})}\\
&\leq C\sum_{k=1}^{\infty}2^{-k(\lambda-\eta-\varepsilon/2)}\lVert f\rVert_{L^2(\mathbb{R}_{+}^d,(1+\arrowvert x\arrowvert)^{-\beta})}.
\end{aligned}
\end{equation*}
By taking $\eta$ and $\varepsilon$ sufficiently small, the right-hand side is bounded by $C\lVert f\rVert_{L^2(\mathbb{R}_{+}^d,(1+\arrowvert x\arrowvert)^{-\beta})}$. This establishes the desired boundedness of $S_{*}^{\lambda}(\mathcal{L}^{\alpha})$ on $ L^2(\mathbb{R}_{+}^d,(1+\arrowvert x\arrowvert)^{-\beta})$ for $d=1$. The same argument extends to the case $ d\geq2$ and $\beta>1$, and the range of $\beta$ can be extended to $\beta>0$ by interpolation. For the interpolation argument, one can refer to \cite[p.18]{CD}.$\hfill{\Box}$\\
\par Now, to complete the proof of Theorem \ref{T1.2}, it remains to prove Proposition \ref{SP}.
\subsection{Proof of the square function estimate}
In this subsection, we establish Proposition \ref{SP}. To this end, we decompose $\mathfrak{S}_{\delta}$ into high- and low-frequency parts. For every fixed $\alpha\in[-1/2,\infty)^{d}$, let us set 
\begin{equation*}
\mathfrak{S}_{\delta}^{l}f(x)=\left(\int_{1/2}^{\delta^{-1/2}}\arrowvert \phi\left(\delta^{-1}\left(1-\frac{\mathcal{L}^\alpha}{t^2}\right)\right)f(x)\arrowvert^2\frac{dt}{t}\right)^{1/2},
\end{equation*}
\begin{equation*}
\mathfrak{S}_{\delta}^{h}f(x)=\left(\int_{\delta^{-1/2}}^{\infty}\arrowvert \phi\left(\delta^{-1}\left(1-\frac{\mathcal{L}^\alpha}{t^2}\right)\right)f(x)\arrowvert^2\frac{dt}{t}\right)^{1/2}.
\end{equation*}
Since the first eigenvalue of the Laguerre operator $\mathcal{L}^\alpha$ is larger than or equal to 1 for $\alpha\in[-1/2,\infty)^{d}$, $\phi\left(\delta^{-1}\left(1-\frac{\mathcal{L}^\alpha}{t^2}\right)\right)$=0 if $t\leq1$ because supp\,$\phi\subset(2^{-3},2^{-1})$ and $\delta\leq1/2$. Thus, we have
$$\mathfrak{S}_{\delta}f(x)\leq\mathfrak{S}_{\delta}^{l}f(x)+\mathfrak{S}_{\delta}^{h}f(x).$$
\par Now, in order to prove Proposition \ref{SP}, it is sufficient to show the following lemma.
\begin{lemma}\label{SL}
Let $A_{\beta,d}^{\varepsilon}(\delta)$ be given by \eqref{S4}. Then, for all $0<\delta<1/2$ and $0<\varepsilon\leq1/2$, we have the following estimates:
\begin{equation}\label{low}
\int_{\mathbb{R}_{+}^{d}}\arrowvert\mathfrak{S}_{\delta}^{l}f(x)\arrowvert^2(1+\arrowvert x\arrowvert)^{-\beta}dx\leq C\delta A_{\beta,d}^{\varepsilon}(\delta)\int_{\mathbb{R}_{+}^{d}}\arrowvert f(x)\arrowvert^2(1+\arrowvert x\arrowvert)^{-\beta}dx,
\end{equation}
\begin{equation}\label{high}
\int_{\mathbb{R}_{+}^{d}}\arrowvert\mathfrak{S}_{\delta}^{h}f(x)\arrowvert^2(1+\arrowvert x\arrowvert)^{-\beta}dx\leq C\delta A_{\beta,d}^{\varepsilon}(\delta)\int_{\mathbb{R}_{+}^{d}}\arrowvert f(x)\arrowvert^2(1+\arrowvert x\arrowvert)^{-\beta}dx.
\end{equation}
\end{lemma}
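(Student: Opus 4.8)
\textbf{Proof plan for Lemma \ref{SL}.}

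The plan is to treat the two frequency regimes separately, exploiting different features of the decomposition. For the low-frequency part $\mathfrak{S}_{\delta}^{l}$, the idea is to insert the operator $(1+\mathcal{L}^{\alpha})^{-\beta/2}(1+\mathcal{L}^{\alpha})^{\beta/2}$ and use the weighted estimate of Lemma \ref{weighted} to trade the polynomial weight $(1+|x|)^{-\beta}$ against spectral smoothing. More precisely, on the range $1/2\le t\le \delta^{-1/2}$ the spectral variable $\mathcal{L}^{\alpha}$ is localized (by the support of $\phi$ and the cutoff) to a region where $\mathcal{L}^{\alpha}\sim t^{2}\le \delta^{-1}$, so that composing with $(1+\mathcal{L}^{\alpha})^{-\beta}$ (bounded below on this range by a constant times $\delta^{\beta}$) is essentially invertible with controlled loss. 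Using Lemma \ref{weighted} in the form $\|(1+|x|)^{\beta}(1+\mathcal{L}^{\alpha})^{-\beta/2}g\|_{2}\lesssim\|g\|_{2}$ (after rescaling $\beta$), one reduces \eqref{low} to an \emph{unweighted} $L^{2}$ square function bound of the type $\int_{0}^{\infty}\|\phi(\delta^{-1}(1-\mathcal{L}^{\alpha}/t^{2}))f\|_{2}^{2}\,\tfrac{dt}{t}\lesssim\delta\|f\|_{2}^{2}$, which is immediate from the spectral theorem and the almost-orthogonality of the dilated cutoffs (each frequency $e_{n}$ is caught by an $O(1)$ number of the intervals $\{t: |1-e_{n}/t^{2}|\sim\delta\}$, each contributing a factor $\delta$ in the $\tfrac{dt}{t}$ measure). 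The auxiliary factor $A_{\beta,d}^{\varepsilon}(\delta)$ absorbs the mismatch between $\delta^{\beta}$ and the target power, together with the $\varepsilon$-loss coming from Lemma \ref{ETrace} when $0<\beta\le 1$.

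For the high-frequency part $\mathfrak{S}_{\delta}^{h}$, where $t\ge\delta^{-1/2}$, the weight is no longer amenable to the smoothing trick, and instead the plan is to invoke the trace/restriction estimate. Writing $\phi(\delta^{-1}(1-\mathcal{L}^{\alpha}/t^{2}))=F_{t}(\sqrt{\mathcal{L}^{\alpha}})$ with $F_{t}$ supported in an interval of length $\sim\delta t$ around $t$, one applies Lemma \ref{ETrace} with $N\sim t$ (so that $F_{t}$ lives in $[N/4,N]$) to each fixed $t$, obtaining
\begin{equation*}
\int_{\mathbb{R}_{+}^{d}}|F_{t}(\sqrt{\mathcal{L}^{\alpha}})f(x)|^{2}(1+|x|)^{-\beta}\,dx\lesssim N^{\kappa(\beta)}\|\delta_{N}F_{t}\|_{N^{2},q(\beta)}^{2}\|f\|_{2}^{2},
\end{equation*}
where $(\kappa,q)=(1,2)$ for $\beta>1$ and $(\kappa,q)=(\tfrac{\beta}{1+\varepsilon},2\beta^{-1}(1+\varepsilon))$ for $0<\beta\le1$. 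The rescaled bump $\delta_{N}F_{t}$ is supported in an interval of length $\sim\delta$, so its $\|\cdot\|_{N^{2},q}$ norm is of size $\sim(\delta N^{2}/N^{2})^{1/q}=\delta^{1/q}$ (only $\sim\delta N^{2}$ of the $N^{2}$ sampling intervals meet the support). Integrating in $t$ over $[\delta^{-1/2},\infty)$ against $\tfrac{dt}{t}$ and summing dyadically in $N$, the factor $N^{\kappa}\delta^{2/q}$ combined with the length $\sim\delta$ of the $t$-support at each dyadic scale yields exactly $\delta\cdot A_{\beta,d}^{\varepsilon}(\delta)$ after optimizing; the restriction $t\ge\delta^{-1/2}$ is what makes the geometric series in $N$ converge and pins down the power of $\delta$. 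One also must upgrade the single-$t$ estimate to the square-function (integrated) estimate, which is done by a standard $TT^{*}$/orthogonality argument: the kernels $F_{t}(\sqrt{\mathcal{L}^{\alpha}})F_{s}(\sqrt{\mathcal{L}^{\alpha}})^{*}$ have spectral support overlapping only for $|t-s|\lesssim\delta t$, giving the needed almost-orthogonality in $t$.

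I expect the main obstacle to be the \emph{bookkeeping of the $\delta$-powers} in the high-frequency estimate for $0<\beta\le1$, $d=1$: here one must carefully balance the loss $N^{\beta/(1+\varepsilon)}$ against the gain $\delta^{\beta(1+\varepsilon)^{-1}}$ from $\|\delta_{N}F_{t}\|_{N^{2},2\beta^{-1}(1+\varepsilon)}$ and the range of integration, and show the net result is $\delta\cdot\delta^{-\varepsilon}$ (up to relabeling $\varepsilon$) rather than something worse; the interpolation-type loss in Lemma \ref{ETrace} is genuinely lossy and only the constraint $t\gtrsim\delta^{-1/2}$ saves the day. The low-frequency part, by contrast, should be comparatively routine once Lemma \ref{weighted} is in hand, the only subtlety being the verification that $(1+\mathcal{L}^{\alpha})^{-\beta}$ acts as an approximate inverse with loss no worse than $\delta^{\beta}$ on the relevant spectral window, which follows from the support properties of $\phi$ together with $t\le\delta^{-1/2}$.
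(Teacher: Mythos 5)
Your overall architecture (Littlewood--Paley in $k$, almost-orthogonality in $t$ via the $\delta$-width spectral windows, Lemma \ref{weighted} for the low part and Lemma \ref{ETrace} for the high part) matches the paper's, but in both halves the mechanism you describe does not produce the claimed powers of $\delta$. For the low-frequency part, reducing to an \emph{unweighted} square function bound after trading the weight for $(1+\mathcal{L}^{\alpha})^{-\beta/4}$ via Lemma \ref{weighted} costs you $\sup_{s\in\mathrm{supp}}\,(1+s^{2})^{\beta/2}\sim t^{\beta}\leq\delta^{-\beta/2}$, so with the $\delta$ from the $dt/t$ orthogonality you land at $\delta^{1-\beta/2}$, which is short of the target $\delta\cdot\delta^{(1-\beta)/2}=\delta^{(3-\beta)/2}$ by a factor $\delta^{1/2}$ — and this loss propagates to a strictly worse threshold than $\lambda>(\beta-1)/4$ in Theorem \ref{T1.2}. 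The missing $\delta^{1/2}=N^{-1}$ is exactly the trace gain: the paper proves \eqref{Claim} by applying Lemma \ref{ETrace} to $F=\phi_{\delta}(\cdot/t)(1+(\cdot)^{2})^{\beta/4}$ and observing that for $t\leq\delta^{-1/2}$ the rescaled support (length $\sim\delta\leq N^{-2}$) meets only $O(1)$ of the $N^{2}$ sampling cells, so $N\lVert\delta_{N}F\rVert_{N^{2},2}^{2}\lesssim N^{\beta-1}\leq\delta^{(1-\beta)/2}$, i.e.\ an operator-norm gain of $N^{-1}$ over the $L^{2}\to L^{2}$ bound. Your "approximate inverse with loss $\delta^{\beta}$" bookkeeping cannot recover this; the trace lemma is indispensable in the low-frequency regime too, not only for the $\varepsilon$-loss when $d=1$.

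For the high-frequency part the gap is more structural. Applying Lemma \ref{ETrace} globally at fixed $t$ puts the \emph{unweighted} norm $\lVert f\rVert_{2}^{2}$ on the right, and your factor $N^{\kappa}\delta^{2/q}=N\delta$ (for $\beta>1$) grows with $N$: after the $t$-orthogonality each dyadic block contributes $\sim 2^{k}\delta^{2}\lVert\psi_{k}f\rVert_{2}^{2}$, and the sum over $k\geq-\tfrac12\log_{2}\delta$ diverges — the constraint $t\geq\delta^{-1/2}$ makes this an \emph{increasing} geometric series, and no optimization fixes it. Moreover you can never convert $\lVert\psi_{k}f\rVert_{2}^{2}$ back into $\lVert f\rVert_{L^{2}((1+|x|)^{-\beta})}^{2}$ this way. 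The paper's resolution requires two ingredients absent from your plan: (i) finite speed of propagation (Lemma \ref{Finit}) together with the Fourier-support decomposition $\phi_{\delta}=\sum_{j\geq j_{0}}\phi_{\delta,j}$, which localizes the kernel of $\phi_{\delta,j}(t^{-1}\sqrt{\mathcal{L}^{\alpha}})$ to $|x-y|\lesssim 2^{j}/t$ and permits a tiling by cubes $Q_{m}$ of side $\sim 2^{j-k}$; and (ii) a near/far dichotomy in $m$: on cubes near the origin the comparison $(1+|x|)^{\beta}\lesssim 2^{(j-k)\beta}$ (estimate \eqref{BB}) transfers the weight to the right-hand side and the factor $2^{k(1-\beta)}\leq\delta^{(\beta-1)/2}$ kills the growth in $k$, while on far cubes the weight is essentially constant and plain $L^{2}$-boundedness suffices. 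Without the spatial localization and the weight-transfer step, the high-frequency estimate \eqref{high} cannot be closed.
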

Both of the proofs of the estimates \eqref{low} and \eqref{high} rely on the generalized trace Lemma \ref{ETrace}. The primary distinction lies in the fact that, for the estimate in \eqref{low}, we also employ the estimate in \eqref{weighted}, which proves particularly effective for the low-frequency part. The choice of $\delta^{-\frac{1}{2}}$ in the definition of $\mathfrak{S}_{\delta}^{l}$ is made by optimizing the estimates which result from two different approaches, see \cite[Remark 3.2]{CD}. The proof of this lemma follows directly from a minor adaptation of the arguments presented in \cite[Proof of Lemma 3.1]{CD}; we include the arguments here for completeness. The fundamental strategy involves an iterative process of appropriate decompositions. We remark that the proof technique for the high-frequency part \eqref{high} was originally introduced in \cite{Carbery1} and has subsequently found applications in various related problems, including those studied in \cite{CD, CLSL}.
\subsection{Proof of \eqref{low}: low frequency part}
\begin{proof}
We begin with the Littlewood-Paley decomposition associated with the operator $\mathcal{L}^\alpha$. Fix a smooth function $\psi \in C^{\infty}(\mathbb{R})$ supported in $(\frac{1}{2}, 2)$ such that $\sum_{k=-\infty}^{\infty} \psi(2^{-k}x) = 1$ for all $x \in \mathbb{R} \backslash {0}$. Spectral theory yields
\begin{equation}\label{LPC}
\sum_{k} \psi_k(\sqrt{\mathcal{L}^\alpha})f := \sum_{k} \psi(2^{-k}\sqrt{\mathcal{L}^\alpha})f = f
\end{equation}
for any $f \in L^2(\mathbb{R}_{+}^{d})$. Using \eqref{LPC}, we obtain
\begin{equation}\label{A2}
\left\lvert \mathfrak{S}{\delta}^{l}f(x) \right\rvert^2 \leq C \sum_{0\leq k\leq 1-\log_2\sqrt{\delta}} \int_{2^{k-1}}^{2^{k+2}} \left\lvert \phi\left( \delta^{-1} \left(1 - \frac{\mathcal{L}^\alpha}{t^2}\right) \right) \psi_k(\sqrt{\mathcal{L}^\alpha})f(x) \right\rvert^2 \frac{dt}{t}
\end{equation}
for $f \in L^2(\mathbb{R}_{+}^{d}) \cap L^2(\mathbb{R}_{+}^{d}, (1 + |x|)^{-\beta} dx)$. To exploit the disjointness of the spectral supports of $ \phi\left( \delta^{-1} \left(1 - \frac{\mathcal{L}^\alpha}{t^2}\right) \right)$, we introduce an additional decomposition in $t$. For $k \in \mathbb{Z}$ and $i = 0, 1, \dots, i_0 = \lfloor 8/\delta \rfloor + 1$, define intervals
\begin{equation}\label{A3}
I_i = \left[ 2^{k-1} + i \cdot 2^{k-1}\delta, 2^{k-1} + (i+1) \cdot 2^{k-1}\delta \right],
\end{equation}
so that $[2^{k-1}, 2^{k+2}] \subset \bigcup_{i=0}^{i_0} I_i$. Define functions $\eta_i$ adapted to $I_i$ by
\begin{equation}\label{A4}
\eta_i(s) = \eta \left( i + \frac{2^{k-1} - s}{2^{k-1}\delta} \right),
\end{equation}
where $\eta \in C_{c}^{\infty}(-1, 1)$ satisfies $\sum_{i \in \mathbb{Z}} \eta(\cdot - i) = 1$. For notational convenience, set
$$\phi_{\delta}(s):=\phi(\delta^{-1}(1-s^2)).$$
Then for $t \in I_i$, we have $\phi_{\delta}(s/t) \eta_{i'}(s) \neq 0$ only if $i - i\delta - 3 \leq i' \leq i + i\delta + 3$; see \cite[p.20]{CD}. \
Thus we can deduce that (see \cite[p.~20]{CD} for details)
\begin{equation}\label{Bclaim}
\left\lvert \mathfrak{S}_{\delta}^{\ell}f(x) \right\rvert^2 \leq C \sum_{0\leq k\leq 1-\log_2\sqrt{\delta}} \sum_{i} \sum_{i'=i-10}^{i+10} \int_{I_i} \left\lvert \phi_{\delta}(t^{-1}\sqrt{\mathcal{L}^\alpha}) \eta_{i'}(\sqrt{\mathcal{L}^\alpha}) \psi_k(\sqrt{\mathcal{L}^\alpha})f(x) \right\rvert^2 \frac{dt}{t}.
\end{equation}
Now, we claim that for $1/2 \leq t \leq \delta^{-1/2}$,
\begin{equation}\label{Claim}
\int_{\mathbb{R}_+^d} \left\lvert \phi_{\delta}(t^{-1}\sqrt{\mathcal{L}^\alpha}) g(x) \right\rvert^2 (1 + |x|)^{-\beta}  dx \leq C A_{\beta,d}^{\varepsilon}(\delta) \int_{\mathbb{R}_+^d} \left\lvert (1 + \mathcal{L}^\alpha)^{-\beta/4} g(x) \right\rvert^2  dx.
\end{equation}
To prove this claim, we consider the equivalent estimate
\begin{equation}\label{Claim1}
\int_{\mathbb{R}_+^d} \left\lvert \phi_{\delta}(t^{-1}\sqrt{\mathcal{L}^\alpha}) (1 + \mathcal{L}^\alpha)^{\beta/4} g(x) \right\rvert^2 (1 + |x|)^{-\beta}  dx \leq C A_{\beta,d}^{\varepsilon}(\delta) \int_{\mathbb{R}_+^d} |g(x)|^2  dx.
\end{equation}
We first establish the estimate for the case $d \geq 2$. Let $N = 8[t] + 1$. Note that $\operatorname{supp} \phi_{\delta}(\cdot/t) \subset [N/4, N]$. By estimate \eqref{ETrace1} in Lemma~\ref{ETrace}, we obtain
\begin{equation*}
\begin{aligned}
&\int_{\mathbb{R}_+^d} \left\lvert \phi_{\delta}(t^{-1}\sqrt{\mathcal{L}^\alpha}) (1 + \mathcal{L}^\alpha)^{\beta/4} g(x) \right\rvert^2 (1 + |x|)^{-\beta}  dx \\
&\quad \leq C N \left\lVert \phi_{\delta}(t^{-1} N x) (1 + N^2 x^2)^{\beta/4} \right\rVert_{N^2,2}^2 \int_{\mathbb{R}_+^d} |g(x)|^2  dx.
\end{aligned}
\end{equation*}
Estimate (3.12) in \cite{CD} yields
\[
\left\lVert \phi_{\delta}(t^{-1} N x) (1 + N^2 x^2)^{\beta/4} \right\rVert_{N^2,2}^2 \leq C N^{\beta-2}.
\]
Thus, noting that $1/2 \leq t \leq \delta^{-1/2}$ and $\beta > 1$, we obtain
\begin{equation*}
\int_{\mathbb{R}_+^d} \left\lvert \phi_{\delta}(t^{-1}\sqrt{\mathcal{L}^\alpha}) (1 + \mathcal{L}^\alpha)^{\beta/4} g(x) \right\rvert^2 (1 + |x|)^{-\beta}  dx \leq C \delta^{(1-\beta)/2} \int_{\mathbb{R}_+^d} |g(x)|^2  dx,
\end{equation*}
which establishes \eqref{Claim1} and consequently \eqref{Claim} for $d \geq 2$. If $d=1$, let $0 \leq \beta < 1$ and set $N = 8[t] + 1$. Then for any $\varepsilon > 0$, by \eqref{ETrace2} in Lemma~\ref{ETrace}, we have
\begin{equation}
\begin{aligned}
&\int_{\mathbb{R}_+^d} \left\lvert \phi_{\delta}(t^{-1}\sqrt{\mathcal{L}^\alpha}) (1 + \mathcal{L}^\alpha)^{\beta/4} g(x) \right\rvert^2 (1 + |x|)^{-\beta}  dx \\
&\quad \leq C_{\varepsilon} N^{\frac{\beta}{1+\varepsilon}} \left\lVert \phi_{\delta}(t^{-1} N x) (1 + N^2 x^2)^{\beta/4} \right\rVert_{N^2, \frac{2(1+\varepsilon)}{\beta}}^2 \int_{\mathbb{R}_+^d} |g(x)|^2  dx.
\end{aligned}
\end{equation}
As before (see \cite[p.~22]{CD}), we have
\[
\left\lVert \phi_{\delta}(t^{-1} N x) (1 + N^2 x^2)^{\beta/4} \right\rVert_{N^2, \frac{2(1+\varepsilon)}{\beta}}^2 \leq C N^{\frac{\beta(\varepsilon-1)}{1+\varepsilon}}.
\]
Thus it follows that
\begin{equation*}
\int_{\mathbb{R}_+^d} \left\lvert \phi_{\delta}(t^{-1}\sqrt{\mathcal{L}^\alpha}) (1 + \mathcal{L}^\alpha)^{\beta/4} g(x) \right\rvert^2 (1 + |x|)^{-\beta}  dx \leq C \delta^{-\frac{\beta}{2(1+\varepsilon)}} \int_{\mathbb{R}_+^d} |g(x)|^2  dx.
\end{equation*}
This establishes \eqref{Claim} for $d=1$.
Now, combining \eqref{Claim} and \eqref{Bclaim}, we see that $\int_{\mathbb{R}_+^d} \left\lvert \mathfrak{S}_{\delta}^{\ell} f(x) \right\rvert^2 (1 + |x|)^{-\beta}  dx$ is bounded by
\begin{equation*}
C A_{\beta,d}^{\varepsilon}(\delta) \sum_{0 \leq k \leq 1 - \log_2\sqrt{\delta}} \sum_{i} \sum_{i'=i-10}^{i+10} \int_{I_i} \left\lvert \eta_{i'}(\sqrt{\mathcal{L}^\alpha}) \psi_k(\sqrt{\mathcal{L}^\alpha}) (1 + \mathcal{L}^\alpha)^{-\beta/4} f(x) \right\rvert^2 \frac{dt}{t}.
\end{equation*}
Since the length of each interval $I_i$ is comparable to $2^{k-1}\delta$, integrating over $t$ and using the disjointness of the spectral supports yields
\begin{equation}\label{3.16}
\int_{\mathbb{R}_+^d} \left\lvert \mathfrak{S}_{\delta}^{\ell} f(x) \right\rvert^2 (1 + |x|)^{-\beta}  dx \leq C \delta A_{\beta,d}^{\varepsilon}(\delta) \int_{\mathbb{R}_+^d} \left\lvert (1 + \mathcal{L}^\alpha)^{-\beta/4} f(x) \right\rvert^2  dx.
\end{equation}
As the operator $(1 + \mathcal{L}^\alpha)^{-\beta}$ is self-adjoint, combining \eqref{3.16} with the dual estimate of \eqref{weighted1.1} gives the desired estimate \eqref{low}.
\end{proof}
\subsection{Proof of \eqref{high}: high frequency part}
For any even function $F$ with $\hat{F}\in L^1(\mathbb{R})$ and supp\,$\hat{F}\subseteq[-r,r]$ we can deduce from Lemma \ref{Finit} that (see \cite[3.14, p.23]{CD} for detail)
\begin{equation}\label{B2}
supp\,K_{F(t\sqrt{\mathcal{L}^{\alpha}})}(x,y)\subset D(t^{-1}r).
\end{equation} 
\par Next, we begin with several decompositions as in \cite{CD}.  Fixing an even function $\vartheta\in C_{c}^{\infty}$ which is identically one on $\{\arrowvert s\arrowvert\leq 1\}$ and supported on $\{\arrowvert s\arrowvert\leq 2\}$, let us set $j_0=[-\log_2\delta]-1$ and $\zeta_{j_o}(s):=\vartheta(2^{-j_0}s)$ and $\zeta_{j}(s):=\vartheta(2^{-j}s)-\vartheta(2^{-j+1}s)$ for $j>j_0$. Then, we have $\sum_{j\geq j_0}\zeta_{j}(s)\equiv1,\forall s>0$.
Recalling that $\phi_{\delta}(s)=\phi(\delta^{-1}(1-s^2))$, for $j\geq j_0$, we set
\begin{equation}\label{B3}
\phi_{\delta,j}(s)=\frac{1}{2\pi}\int_{0}^{\infty}\zeta_j(u)\hat{\phi_{\delta}}(u)\cos(su)du.
\end{equation}
It can be verified that
\begin{equation}\label{B4}
\arrowvert\phi_{\delta,j}(s)\arrowvert\leq\left\{\begin{array}{lr}C_N2^{(j_0-j)N},\qquad\qquad\qquad \arrowvert s\arrowvert \in [1-2\delta,1+2\delta],\\
C_N2^{j-j_0}(1+2^j\arrowvert s-1\arrowvert)^{-N},\quad\quad otherwise,
\end{array}
\right.
\end{equation}
for any $N$ and all $j\geq j_0$ (see \cite[3.16, p.23]{CD}). By the Fourier inversion formula, we obtain
\begin{equation}\label{B5}
\phi_{\delta}(s)=\phi(\delta^{-1}(1-s^2))=\sum_{j\geq j_0}\phi_{\delta,j}(s),\quad s>0.
\end{equation}
By \eqref{B2}, we have 
\begin{equation}\label{B6}
supp\,K_{\phi_{\delta,j}(\sqrt{\mathcal{L}^{\alpha}}/t)}(x,y)\subset D(t^{-1}2^{j+1}):=\{(x,y)\in \mathbb{R}_{+}^{d}\times\mathbb{R}_{+}^{d}:\arrowvert x-y\arrowvert\leq c_02^{j+1}/t\}.
\end{equation}
Now from \eqref{LPC}, it follows that
\begin{equation}\label{B7}
\arrowvert \mathfrak{S}_{\delta}^{h}f(x)\arrowvert ^2\leq C \sum_{k\geq 1-\log_2\sqrt{\delta}}\int_{2^{k-1}}^{2^{k+2}}\arrowvert \phi\left(\delta^{-1}\left(1-\frac{\mathcal{L}^{\alpha}}{t^2}\right)\right)\psi_{k}(\mathcal{L}^{\alpha})f(x)\arrowvert^2\frac{dt}{t}.
\end{equation}
For $k\geq 1-\log_2\sqrt{\delta}$ and $j\geq j_0$, let us set
\begin{equation*}\label{B8}
E^{k,j}(t):=\left\langle\left\lvert\phi_{\delta,j}(t^{-1}\sqrt{\mathcal{L}^{\alpha}})\psi_{k}(\sqrt{\mathcal{L}^{\alpha}})f(x)\right\rvert^2,(1+\arrowvert x\arrowvert)^{-\beta} \right\rangle. 
\end{equation*}
Using \eqref{B5}, \eqref{B7} and Minkowski's inequality, we have
\begin{equation}\label{B9}
\int_{\mathbb{R}_{+}^{d}}\arrowvert \mathfrak{S}_{\delta}^{h}f(x)\arrowvert ^2(1+\arrowvert x\arrowvert)^{-\beta}dx\leq C \sum_{k\geq 1-\log_2\sqrt{\delta}}\left(\sum_{j\geq j_0}\left(\int_{2^{k-1}}^{2^{k+2}}E^{k,j}(t)\frac{dt}{t}\right)^{1/2}\right)^2.
\end{equation}
In order to make use the localization property \eqref{B6}, we need to decompose $\mathbb{R}_{+}^{d}$ into disjoint cubes of side length $c_02^{j-k+2}$. For a given $k\in \mathbb{Z}$, $j\geq j_0$, and $m=(m_1,\cdot\cdot\cdot,m_d)\in \mathbb{N}^d$, let us set 
\begin{equation*}\label{B10}
Q_{m}=\left[c_02^{j-k+2}(m_1-\frac{1}{2}), c_02^{j-k+2}(m_1+\frac{1}{2})\right)\times\cdot\cdot\cdot\times\left[c_02^{j-k+2}(m_d-\frac{1}{2}), c_02^{j-k+2}(m_d+\frac{1}{2})\right) \cap \mathbb{R}_{+}^{d},
\end{equation*}
which are disjoint dyadic cuboid with side length $\frac{1}{2}c_02^{j-k+2}$ or $c_02^{j-k+2}$. If $m_i\neq 0, i=1,\cdot\cdot\cdot,d$, they are cubes centered at $x_m:=c_02^{j-k+2}m$ with side length $c_02^{j-k+2}$. Clearly, $\mathbb{R}_{+}^{d}=\cup_{m\in\mathbb{N}^{d}}Q_m$. For each $m$, we define $\widetilde{Q_m}$ by setting
\begin{equation*}\label{B11}
\widetilde{Q_m}:=\bigcup_{m'\in\mathbb{Z}_{\geq 0}^{d}:dist(Q_{m'},Q_{m})\leq\sqrt{d}c_02^{j-k+3}}Q_{m'},
\end{equation*}
and denote
\begin{equation*}\label{B12}
M_0:=\{m\in\mathbb{Z}_{\geq 0}^{d}:Q_0\cap\widetilde{Q_m}\neq \emptyset\}.
\end{equation*}
For $t\in[2^{k-1},2^{k+2}]$ it follows by \eqref{B6} that $\chi_{Q_m}\phi_{\delta,j}(t^{-1}\sqrt{\mathcal{L}^{\alpha}})\chi_{Q_{m'}}=0$ if $\widetilde{Q_m}\cap Q_{m'}=\emptyset$ for every $j,k$. Hence, it is clear that
\begin{equation*}\label{B13}
\phi_{\delta,j}(t^{-1}\sqrt{\mathcal{L}^{\alpha}})\psi_{k}(\sqrt{\mathcal{L}^{\alpha}})f=\sum_{m,m':dist(Q_m,Q_{m'})<t^{-1}c_02^{j+2}}\chi_{Q_m}\phi_{\delta,j}(t^{-1}\sqrt{\mathcal{L}^{\alpha}})\chi_{Q_{m'}}\psi_{k}(\sqrt{\mathcal{L}^{\alpha}}),
\end{equation*}
which gives 
\begin{equation}\label{B14}
E^{k,j}(t)\leq C\sum_{m}\left\langle\left\lvert\chi_{Q_m}\phi_{\delta,j}(t^{-1}\sqrt{\mathcal{L}^{\alpha}})\chi_{\widetilde{Q_m}}\psi_{k}(\sqrt{\mathcal{L}^{\alpha}})f(x)\right\rvert^2,(1+\arrowvert x\arrowvert)^{-\beta} \right\rangle.
\end{equation}
To exploit orthogonality generated by the disjointness of spectral support, we further decompose $\phi_{\delta,j}$ which is not compactly supported. We choose an even function $\theta\in C_{c}^{\infty}(-4,4)$ such that $\theta(s)=1$ for $s\in(-2,2)$. Set
\begin{equation}\label{B15}
\varphi_{0,\delta}(s):=\theta(\delta^{-1}(1-s)),\quad\varphi_{\mathscr{l},\delta}(s):=\theta(2^{-\mathscr{l}}\delta^{-1}(1-s))-\theta(2^{-\mathscr{l}+1}\delta^{-1}(1-s))
\end{equation}
for all $\mathscr{l}\geq1$ such that $1=\sum_{\mathscr{l}=0}^{\infty}\varphi_{\mathscr{l},\delta}(s)$ and $\phi_{\delta,j}=\sum_{\mathscr{l}=0}^{\infty}(\varphi_{\mathscr{l},\delta}\phi_{\delta,j})(s)$ for all $s>0$. We put it into \eqref{B14} to write
\begin{equation}\label{B16}
\left(\int_{2^{k-1}}^{2^{k+2}}E^{k,j}(t)\frac{dt}{t}\right)^{1/2}\leq\sum_{\mathscr{l}=0}^{\infty}\left(\sum_{m}\int_{2^{k-1}}^{2^{k+2}}E_{m}^{k,j,\mathscr{l}}(t)\frac{dt}{t}\right)^{1/2},
\end{equation}
where 
\begin{equation*}
E_{m}^{k,j,\mathscr{l}}(t)=\left\langle\left\lvert\chi_{Q_m}(\varphi_{\mathscr{l},\delta}\phi_{\delta,j})(t^{-1}\sqrt{\mathcal{L}^{\alpha}})\chi_{\widetilde{Q_m}}\psi_{k}(\sqrt{\mathcal{L}^{\alpha}})f(x)\right\rvert^2,(1+\arrowvert x\arrowvert)^{-\beta} \right\rangle.
\end{equation*}
Recalling that \eqref{A3} and \eqref{A4}, we observe that, for every $t\in I_i$, it is possible that $\varphi_{\mathscr{l},\delta}(s/t)\eta_{i'}(s) \neq 0 $ only when $i-2^{\mathscr{l}+6}\leq i'\leq i+2^{\mathscr{l}+6}$. Hence,
\begin{equation*}
(\varphi_{\mathscr{l},\delta}\phi_{\delta,j})(t^{-1}\sqrt{\mathcal{L}^{\alpha}})=\sum_{i'=i-2^{\mathscr{l}+6}}^{i+2^{\mathscr{l}+6}}(\varphi_{\mathscr{l},\delta}\phi_{\delta,j})(t^{-1}\sqrt{\mathcal{L}^{\alpha}})\eta_{i'}(\sqrt{\mathcal{L}^{\alpha}}),\quad t\in I_i.
\end{equation*}
From this and Cauchy-Schwarz's inequality, we have
\begin{equation*}
E_{m}^{k,j,\mathscr{l}}(t)\leq C2^{\mathscr{l}}\sum_{i'=i-2^{\mathscr{l}+6}}^{i+2^{\mathscr{l}+6}}E_{m,i'}^{k,j,\mathscr{l}}(t)
\end{equation*}
for $t\in I_i$ where
\begin{equation*}
E_{m,i'}^{k,j,\mathscr{l}}(t)=\left\langle\left\lvert\chi_{Q_m}(\varphi_{\mathscr{l},\delta}\phi_{\delta,j})(t^{-1}\sqrt{\mathcal{L}^{\alpha}})\eta_{i'}(\sqrt{\mathcal{L}^{\alpha}})\left[\chi_{\widetilde{Q_m}}\psi_{k}(\sqrt{\mathcal{L}^{\alpha}})f\right](x)\right\rvert^2,(1+\arrowvert x\arrowvert)^{-\beta} \right\rangle.
\end{equation*}
Combining this with \eqref{B16}, we get
\begin{equation}\label{B17}
\left(\int_{2^{k-1}}^{2^{k+2}}E^{k,j}(t)\frac{dt}{t}\right)^{1/2}\leq C\sum_{\mathscr{l}=0}^{\infty}2^{\mathscr{l}/2}\left(\sum_{m}\sum_{i}\int_{I_i}\sum_{i'=i-2^{\mathscr{l}+6}}^{i+2^{\mathscr{l}+6}}E_{m,i'}^{k,j,\mathscr{l}}(t)\frac{dt}{t}\right)^{1/2}.
\end{equation}
To continue, we distinguish two cases: $j>k$; and $j\leq k$. The first case is more involved and we need to distinguish several sub-cases which we separately handle.\\
\par\textbf{Case $j>k$} From the above inequality  \eqref{B17} we have
\begin{equation*}
\left(\int_{2^{k-1}}^{2^{k+2}}E^{k,j}(t)\frac{dt}{t}\right)^{1/2}\leq I_1(j,k)+I_2(j,k)+I_3(j,k),
\end{equation*}
where 
\begin{equation}\label{B18}
I_1(j,k):=\sum_{\mathscr{l}=0}^{[-\log_2\delta]-3}2^{\mathscr{l}/2}\left(\sum_{m\in M_{0}}\sum_{i}\int_{I_i}\sum_{i'=i-2^{\mathscr{l}+6}}^{i+2^{\mathscr{l}+6}}E_{m,i'}^{k,j,\mathscr{l}}(t)\frac{dt}{t}\right)^{1/2},
\end{equation}
\begin{equation}\label{B19}
I_2(j,k):=\sum_{\mathscr{l}=[-\log_2\delta]-2}^{\infty}2^{\mathscr{l}/2}\left(\sum_{m\in M_{0}}\sum_{i}\int_{I_i}\sum_{i'=i-2^{\mathscr{l}+6}}^{i+2^{\mathscr{l}+6}}E_{m,i'}^{k,j,\mathscr{l}}(t)\frac{dt}{t}\right)^{1/2},
\end{equation}
\begin{equation}\label{B20}
I_3(j,k):=\sum_{\mathscr{l}=0}^{\infty}2^{\mathscr{l}/2}\left(\sum_{m\notin M_{0}}\sum_{i}\int_{I_i}\sum_{i'=i-2^{\mathscr{l}+6}}^{i+2^{\mathscr{l}+6}}E_{m,i'}^{k,j,\mathscr{l}}(t)\frac{dt}{t}\right)^{1/2}.
\end{equation}
We will now estimate $I_1(j,k)$, $I_2(j,k)$, and $I_3(j,k)$ one by one.\\
\\
\textit{Estimate of the term  $I_1(j,k)$.} We claim that, for any $N>0$,
\begin{equation}\label{B21}
I_1(j,k)\leq C_N 2^{(j_0-j)N}\left(\delta A_{\beta,d}^{\varepsilon}(\delta)\right)^{1/2}\left(\int_{\mathbb{R}_{+}^{d}}\arrowvert \psi_k(\sqrt{\mathcal{L}^{\alpha}})f(x)\arrowvert ^2(1+\arrowvert x\arrowvert)^{-\beta}dx\right)^{1/2}
\end{equation}
where $A_{\beta,d}^{\varepsilon}(\delta)$ is defined in \eqref{S4}. Let us first consider the case $d\geq2$. For \eqref{B21}, it suffices to show
\begin{equation}\label{B22}
E_{m,i'}^{k,j,\mathscr{l}}(t)\leq C_N2^{-\mathscr{l}N}2^{(j_0-j)N}2^{k}\delta \int_{\mathbb{R}_{+}^{d}}\arrowvert \eta_{i'}(\sqrt{\mathcal{L}^{\alpha}})[ \chi_{\widetilde{Q_m}}\psi_k(\sqrt{\mathcal{L}^{\alpha}})f](x)\arrowvert ^2d\mu_{\alpha}(x)
\end{equation}
for any $N>0$ while $t\in I_i$ being fixed and $i-2^{\mathscr{l}+6}\leq i'\leq i+2^{\mathscr{l}+6}$. Indeed, since the support of $\eta_{i}$ are boundedly overlapping, \eqref{B22} gives that
\begin{equation*}
\sum_{i}\int_{I_i}\sum_{i'=i-2^{\mathscr{l}+6}}^{i+2^{\mathscr{l}+6}}E_{m,i'}^{k,j,\mathscr{l}}(t)\frac{dt}{t}\leq C_N2^{-\mathscr{l}(N-1)}2^{(j_0-j)N}2^{k}\delta^2\lVert \chi_{\widetilde{Q_m}}\psi_k(\sqrt{\mathcal{L}^{\alpha}})f\rVert_2^2.
\end{equation*}
Now we take summation over $\mathscr{l}$ and $m\in M_{0}$ in \eqref{B18} to get
\begin{equation*}
I_1(j,k)\leq C_N 2^{j_0\beta/2}2^{(j_0-j)(N-\beta)/2}2^{k(1-\beta)/2}\delta\left(2^{(k-j)\beta}\sum_{m\in M_0}\lVert \chi_{\widetilde{Q_m}}\psi_k(\sqrt{\mathcal{L}^{\alpha}})f\rVert_2^2\right)^{1/2}.
\end{equation*}
Since $j>k$ and $m\in M_0$, we note that $(1+\arrowvert x\arrowvert)^{\beta}\leq C2^{(j-k)\beta}$ if $x\in \widetilde {Q_m}$. It follows that
\begin{equation}\label{BB}
2^{(k-j)\beta}\sum_{m\in M_0}\lVert \chi_{\widetilde{Q_m}}\psi_k(\sqrt{\mathcal{L}^{\alpha}})f\rVert_2^2\leq C\int_{\mathbb{R}_{+}^{d}}\arrowvert \psi_k(\sqrt{\mathcal{L}^{\alpha}})f(x)\arrowvert ^2(1+\arrowvert x\arrowvert)^{-\beta}dx.
\end{equation}
Noting that $j_0=[-\log_2\delta]-1$ and $k\geq [-\frac{1}{2}\log_2\delta]$, we obtain
\begin{equation*}
I_1(j,k)\leq C_N 2^{(j_0-j)(N-\beta)/2}\delta^{3/4-\beta/4}\left(\int_{\mathbb{R}_{+}^{d}}\arrowvert \psi_k(\sqrt{\mathcal{L}^{\alpha}})f(x)\arrowvert ^2(1+\arrowvert x\arrowvert)^{-\beta}dx\right)^{1/2},
\end{equation*}
which gives \eqref{B21} since $N>0$ is arbitrary. We now proceed to prove \eqref{B22}. Note that supp\,$\varphi_{\mathscr{l},\delta}\subseteq (1-2^{\mathscr{l}+2}\delta,1+2^{\mathscr{l}+2}\delta)$, and so supp\,$(\varphi_{\mathscr{l},\delta}\phi_{\delta,j})(\cdot/t)\subset [t(1-2^{\mathscr{l}+2}\delta), t(2^{\mathscr{l}+2}\delta)]$. Thus, setting $R=[t(1+2^{\mathscr{l}+2}\delta)]$, by \eqref{ETrace1} we get
\begin{equation}\label{B23}
E_{m,i'}^{k,j,\mathscr{l}}(t)\leq R\lVert (\varphi_{\mathscr{l},\delta}\phi_{\delta,j})(R\cdot/t)\rVert_{R^2,2}^2\int_{\mathbb{R}_{+}^{d}}\arrowvert \eta_{i'}(\sqrt{\mathcal{L}^{\alpha}})[ \chi_{\widetilde{Q_m}}\psi_k(\sqrt{\mathcal{L}^{\alpha}})f](x)\arrowvert ^2dx
\end{equation}
for $0\leq \mathscr{l}\leq [-\log_2\delta]-3$. By formula (3.34) in \cite{CD}, we have
\begin{equation}\label{BBB}
\lVert (\varphi_{\mathscr{l},\delta}\phi_{\delta,j})(R\cdot/t)\rVert_{R^2,2}\leq\lVert (\varphi_{\mathscr{l},\delta}\phi_{\delta,j})((1+2^{\mathscr{l}+2}\delta)\cdot)\rVert_{\infty}(2^{\mathscr{l}+3}\delta)^{1/2}\leq C_N2^{(j_0-j)N}2^{-\mathscr{l}N}(2^{\mathscr{l}+3}\delta)^{1/2}
\end{equation}
for any $N>0$ and $\mathscr{l}\geq 0$. Since $R\sim 2^{k}$, combining this with \eqref{B23}, we get the desired \eqref{B22}.
\par For the case $d=1$, it can be handled in the same manner as before, the only difference is that we use \eqref{ETrace2} instead of \eqref{ETrace1}, so we omit those steps.\\
\\
\textit{Estimate of the term  $I_2(j,k)$.} We can deduce as before that
\begin{equation*}
E_{m,i'}^{k,j,\mathscr{l}}(t)\leq \lVert (\varphi_{\mathscr{l},\delta}\phi_{\delta,j})(\cdot/t)\rVert_{\infty}^2\int_{\mathbb{R}_{+}^{d}}\arrowvert \eta_{i'}(\sqrt{\mathcal{L}^{\alpha}})[ \chi_{\widetilde{Q_m}}\psi_k(\sqrt{\mathcal{L}^{\alpha}})f](x)\arrowvert ^2dx.
\end{equation*}
From the definition of $\varphi_{\mathscr{l},\delta}$ and \eqref{B4} we have
\begin{equation*}
\lVert (\varphi_{\mathscr{l},\delta}\phi_{\delta,j})(\cdot/t)\rVert_{\infty}\leq C_N2^{j-j_0}(2^{j+\mathscr{l}}\delta)^{-N},\quad \mathscr{l}\geq[-\log_2\delta]-2.
\end{equation*}
Thus, it follows that
\begin{equation*}
E_{m,i'}^{k,j,\mathscr{l}}(t)\leq C_N2^{2(j-j_0)}(2^{j+\mathscr{l}}\delta)^{-2N}\int_{\mathbb{R}_{+}^{d}}\arrowvert \eta_{i'}(\sqrt{\mathcal{L}^{\alpha}})[ \chi_{\widetilde{Q_m}}\psi_k(\sqrt{\mathcal{L}^{\alpha}})f](x)\arrowvert ^2dx.
\end{equation*}
After putting this in \eqref{B19}, we take summation over $m\in M_{0}$ to obtain
\begin{equation*}
I_2(j,k)\leq C_N\delta^{1/2-N}2^{(j-j_0)}2^{-N_j}2^{(j-k)\alpha/2}\sum_{\mathscr{l}=[-\log_2\delta]-2}^{\infty}2^{-\mathscr{l}(N-2)}\left(2^{(k-j)\beta}\sum_{m\in M_0}\lVert \chi_{\widetilde{Q_m}}\psi_k(\sqrt{\mathcal{L}^{\alpha}})f\rVert_2^2\right)^{1/2}.
\end{equation*}
Since $j>k$, $j_0=[-\log_2\delta]-1$ and $k\geq[-\frac{1}{2}\log_2\delta]$, using \eqref{BB} and taking sum over $\mathscr{l}$ we obtain
\begin{equation}\label{B24}
I_2(j,k)\leq C_N2^{(j_0-j)N}\delta^N \left(\int_{\mathbb{R}_{+}^{d}}\arrowvert \psi_k(\sqrt{\mathcal{L}^{\alpha}})f(x)\arrowvert ^2(1+\arrowvert x\arrowvert)^{-\beta}dx\right)^{1/2}
\end{equation}
for any $N>0$.\\
\\
\textit{Estimate of the term  $I_3(j,k)$.} We now prove that
\begin{equation}\label{B25}
I_3(j,k)\leq C_N2^{(j_0-j)N}\delta^{1/2} \left(\int_{\mathbb{R}_{+}^{d}}\arrowvert \psi_k(\sqrt{\mathcal{L}^{\alpha}})f(x)\arrowvert ^2(1+\arrowvert x\arrowvert)^{-\beta}dx\right)^{1/2}.
\end{equation}
Recall that $x_m=2^{j-k+2}m$ is the center of the cube $Q_m$. We begin with making an observation that
\begin{equation}\label{B26}
C^{-1}(1+\arrowvert x_m\arrowvert)\leq 1+\arrowvert x\arrowvert\leq C(1+\arrowvert x_m\arrowvert),\quad x\in Q_m
\end{equation}
provided that $m\notin M_{0}$. By this observation, we can deduce that 
\begin{equation*}
E_{m,i'}^{k,j,\mathscr{l}}(t)\leq (1+\arrowvert x_m\arrowvert)^{-\beta}\lVert (\varphi_{\mathscr{l},\delta}\phi_{\delta,j})(t^{-1}\sqrt{\mathcal{L}^{\alpha}})\rVert_{2\rightarrow 2}^{2}\lVert  \eta_{i'}(\sqrt{\mathcal{L}^{\alpha}})[ \chi_{\widetilde{Q_m}}\psi_k(\sqrt{\mathcal{L}^{\alpha}})f]\rVert_2^2.
\end{equation*}
Since $\lVert (\varphi_{\mathscr{l},\delta}\phi_{\delta,j})(t^{-1}\sqrt{\mathcal{L}^{\alpha}})\rVert_{2\rightarrow 2}\leq \lVert (\varphi_{\mathscr{l},\delta}\phi_{\delta,j})\rVert_{\infty}$, it follows from \eqref{BBB} that we have
\begin{equation*}
E_{m,i'}^{k,j,\mathscr{l}}(t)\leq (1+\arrowvert x_m\arrowvert)^{-\beta}2^{2(j_0-j)N}2^{-2\mathscr{l}N}\lVert  \eta_{i'}(\sqrt{\mathcal{L}^{\alpha}})[ \chi_{\widetilde{Q_m}}\psi_k(\sqrt{\mathcal{L}^{\alpha}})f]\rVert_2^2.
\end{equation*}
Using this and disjointness of the spectral supports, we obtain
\begin{equation*}
\begin{aligned}
\sum_{m\notin M_{0}}\sum_{i}\int_{I_i}\sum_{i'=i-2^{\mathscr{l}+6}}^{i+2^{\mathscr{l}+6}}E_{m,i'}^{k,j,\mathscr{l}}(t)\frac{dt}{t}&\leq C2^{2(j_0-j)N}2^{-2\mathscr{l}N}\delta \sum_{m\notin M_{0}}(1+\arrowvert x_m\arrowvert)^{-\beta}\lVert \chi_{\widetilde{Q_m}}\psi_k(\sqrt{\mathcal{L}^{\alpha}})f\rVert_2^2\\
&\leq C2^{2(j_0-j)N}2^{-2\mathscr{l}N}\delta \sum_{m}\int_{\widetilde{Q_m}}\arrowvert \chi_{\widetilde{Q_m}}\psi_k(\sqrt{\mathcal{L}^{\alpha}})f\arrowvert^2 (1+\arrowvert x\arrowvert)^{-\beta}dx\\
&\leq C2^{2(j_0-j)N}2^{-2\mathscr{l}N}\delta \int_{\mathbb{R}_+^d}\arrowvert \psi_k(\sqrt{\mathcal{L}^{\alpha}})f\arrowvert^2 (1+\arrowvert x\arrowvert)^{-\beta}dx.
\end{aligned}
\end{equation*}
Finally, recalling \eqref{B20} and taking sum over $\mathscr{l}$ yields the estimate \eqref{B25}.
\par Therefore, since $\delta\leq \delta A_{\beta,d}^{\varepsilon}(\delta)$, we thus deduce from \eqref{B21}, \eqref{B25} and \eqref{B25} that
\begin{equation}\label{B27}
\int_{2^{k-1}}^{2^{k+2}}E^{k,j}(t)\frac{dt}{t}\leq C_N2^{2(j_0-j)N} \delta A_{\beta,d}^{\varepsilon}(\delta)\int_{\mathbb{R}_+^d}\arrowvert \psi_k(\sqrt{\mathcal{L}^{\alpha}})f\arrowvert^2 (1+\arrowvert x\arrowvert)^{-\beta}dx
\end{equation}
for any $N>0$ if $j>k $.
\\
\par 
\textbf{Case 2: $j \leq k$.}
In this case, the side length of each $Q_m$ is less than $c_0 4$, so \eqref{B26} holds for any $m \in \mathbb{N}^d$. Thus, the argument used in proving \eqref{B25} applies without modification. We deduce that
\begin{equation}\label{B28}
\int_{2^{k-1}}^{2^{k+2}} E^{k,j}(t) \frac{dt}{t} \leq C_N 2^{2(j_0 - j)N} \delta \int_{\mathbb{R}_+^d} \left\lvert \psi_k(\sqrt{\mathcal{L}^\alpha}) f \right\rvert^2 (1 + |x|)^{-\beta}  dx
\end{equation}
for any $N > 0$.

\textit{Completion of the proof of \eqref{high}.}
By \eqref{B27}, \eqref{B28}, and the inequality $\delta \leq \delta A_{\beta,d}^{\varepsilon}(\delta)$, we have established \eqref{B27} for all $j \geq j_0$ and $k$. Substituting \eqref{B27} into the right-hand side of \eqref{B9} and summing over $j$, we obtain
\begin{equation}
\int_{\mathbb{R}_+^d} \left\lvert \mathfrak{S}_{\delta}^{\ell} f(x) \right\rvert^2 (1 + |x|)^{-\beta}  dx \leq C \delta A_{\beta,d}^{\varepsilon}(\delta) \int_{\mathbb{R}_+^d} \sum_{k} \left\lvert \psi_k(\sqrt{\mathcal{L}^\alpha}) f \right\rvert^2 (1 + |x|)^{-\beta}  dx.
\end{equation}
Applying the Littlewood-Paley inequality \eqref{PL1.1} yields the estimate \eqref{high}, completing the proof of Lemma~\ref{SL}.
\section{Sharpness of the summability indices}\label{Section4}
In this section we discuss sharpness of summability indices given in Theorem \ref{T1.1}. The following proves the necessity parts of Theorem \ref{T1.1}.
\begin{proposition}\label{P4.1}
Let $d\geq 2$, and $p>2d/(d-1)$.If $0\leq \delta<\lambda(p)/2$, then there exists a measurable function $f$ such that $f\in L^{p}(\mathbb{R}_{+}^{d})$ and \eqref{Fail} holds. 
\end{proposition}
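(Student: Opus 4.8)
The strategy is to construct an explicit counterexample by testing the maximal operator against eigenfunctions of $\mathcal{L}^{\alpha}$, mimicking the approach of \cite{JLR} but replacing the Schr\"odinger propagator $e^{it\mathcal{L}^{\alpha}}(x,0)$ — which vanishes identically when $\alpha\neq(-\tfrac12,\dots,-\tfrac12)$ — with the generating function identity \eqref{Generate}. First I would fix a large parameter $R$ and consider, for a suitable choice of $0<s<1$ (which will play the role of $R^{-2}$ up to constants), the function
\[
F_s^{\alpha}(x):=\sum_{n=0}^{\infty}s^{n}\mathcal{P}_n^{\alpha}\big(\varphi_0^{\alpha}\big)(x)\quad\text{or, more usefully, a function built so that }S_R^{\lambda}(\mathcal{L}^{\alpha})f\text{ concentrates.}
\]
The cleaner route is to take $f$ of product form $f(x)=\prod_{j=1}^{d}g_j(x_j)$ and use \eqref{Generate}–\eqref{L1} to write the one-dimensional building block $\sum_n t^n \varphi_n^{\alpha_j}(x_j)\varphi_n^{\alpha_j}(y_j)$ in closed form via the Hille--Hardy formula; evaluating at an appropriate point and choosing $t=1-c/R^2$ produces a kernel that is large — of size $\sim R^{d}$ in $L^{\infty}$ but with $L^p$ norm of the corresponding test function only $\sim R^{d(1-1/p)-\text{(gain)}}$ — on a set of measure $\gtrsim 1$. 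The point is that $S_R^{\lambda}(\mathcal{L}^{\alpha})$ applied to this test function, at $R$'s chosen along a sequence tending to $\infty$, fails to stay bounded precisely when $\lambda<\lambda(p)/2$.

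More concretely, I would proceed as follows. Step 1: reduce to a lower bound for $\sup_R\|S_R^{\lambda}(\mathcal{L}^{\alpha})\|_{L^p\to L^{p,\infty}}$ (or directly for the maximal operator) by a standard Nikishin/linearization-free argument — it suffices to exhibit, for each large $R$, a function $f_R\in L^p$ with $\|f_R\|_p\le 1$ such that $|\{x:|S_R^{\lambda}(\mathcal{L}^{\alpha})f_R(x)|\gtrsim R^{\lambda(p)/2-\lambda}\cdot(\text{something}\to\infty)\}|\gtrsim1$, and then glue these along a lacunary sequence $R_k\to\infty$ using disjointness of spectral localizations to get a single $f\in L^p$ witnessing \eqref{Fail}. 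Step 2: take $f_R$ to be (a normalized multiple of) $x\mapsto \prod_j \sqrt{x_j}\,\eta(x_j)$ times a modulation, or — following the generating-function idea — take $f_R$ with Laguerre coefficients $\langle f_R,\varphi_\mu^{\alpha}\rangle = t^{|\mu|_1}$ for $t=t(R)$ close to $1$, truncated to $|\mu|_1\lesssim R^2$, and use \eqref{Generate} to sum the series. Step 3: compute $S_R^{\lambda}(\mathcal{L}^{\alpha})f_R(x)=\sum_n (1-e_n/R^2)_+^{\lambda}t^n \varphi$-type-kernel and, via the asymptotics of Laguerre functions in Lemma~\ref{L} (the oscillatory regime \eqref{L1.2}) together with stationary phase / the explicit Hille--Hardy kernel, show this is $\gtrsim R^{d/2}\cdot R^{-\lambda-\text{const}}$ on $\{|x|\sim 1\}$ while $\|f_R\|_p\sim R^{d/2 - d/p + \text{const}}$; matching powers gives the threshold $\lambda(p)/2$. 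Step 4: optimize the free parameter (the distance of $t$ from $1$, equivalently a localization width) to extract exactly $\lambda<\lambda(p)/2$, and note the restriction $p>2d/(d-1)$ enters because that is where $\lambda(p)=d(1/2-1/p)-1/2$ is the active branch and the construction's $L^p$ gain is genuine.

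The main obstacle I expect is Step 3: controlling the kernel $\sum_{n}(1-e_n/R^2)_+^{\lambda}t^n\,\sum_{|\mu|_1=n}\varphi_\mu^{\alpha}(x)\varphi_\mu^{\alpha}(y)$ precisely enough — both its size on a set of positive measure and the $L^p$ norm of the generating function — since the Bochner--Riesz weight $(1-e_n/R^2)_+^{\lambda}$ does not combine as cleanly with $t^n$ as the pure generating function does, so one must either resum using an integral representation of $(1-\cdot)_+^{\lambda}$ (writing it as a superposition of exponentials/powers and invoking \eqref{Generate} inside) or work directly with the one-dimensional Laguerre asymptotics in the transition region $e_n\sim R^2$, where \eqref{L1.2} and the Airy-type estimate in Lemma~\ref{L} are delicate. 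A secondary technical point is that $S_R^{\lambda}(\mathcal{L}^{\alpha})$ does not preserve radial structure, so unlike \cite{CD} one cannot reduce the sharpness to a one-variable Bessel computation; the product structure of $f_R$ and the factorization \eqref{Kernel}-style identity for the relevant kernel is what makes the $d$-dimensional computation tractable, and verifying that the product of $d$ one-dimensional gains assembles to the stated exponent $\lambda(p)/2$ with $\lambda(p)=d(1/2-1/p)-1/2$ is the bookkeeping that must be done carefully.
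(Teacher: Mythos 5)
Your overall strategy (a lacunary family of test functions built from the generating function \eqref{Generate}, summed into a single $f=\sum_k 2^{-k}f_k$, with the exponent count $\mu_k^{\lambda(p)/2}$ coming from comparing an $L^\infty$ lower bound on a set of measure $\gtrsim 1$ against the $L^p$ norm of the test function) is the same as the paper's. But there is a genuine gap at the step you yourself flag as the ``main obstacle'': you propose to estimate $S_R^{\lambda}(\mathcal{L}^{\alpha})f_R$ directly, which forces you to control the kernel $\sum_n(1-e_n/R^2)_+^{\lambda}t^n\sum_{|\mu|_1=n}\varphi_\mu^{\alpha}(x)\varphi_\mu^{\alpha}(y)$, and you only offer speculative ways around this (resumming $(1-\cdot)_+^{\lambda}$, or Airy-region asymptotics). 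The paper never performs this computation. Instead it uses the pointwise inequality $|P_{\mu_k}f(x)|\leq C_*\mu_k^{\lambda}S_{*}^{\lambda}(\mathcal{L}^{\alpha})f(x)$ (equation \eqref{pp6.1}, a finite-difference recovery of the spectral projection from the Riesz means, quoted from \cite{JLR}), which reduces the whole problem to a lower bound for a single spectral projection $P_{\mu_k}f_k$ on $\mathbb{E}$ plus control of the cross terms $P_{\mu_k}f_j$, $j\neq k$. This reduction is the decisive idea; without it your Step 3 remains open, and the Bochner--Riesz weight genuinely does not combine with $t^n$ in any usable closed form.

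A second, smaller issue is the choice of test function. Coefficients $\langle f_R,\varphi_\mu^{\alpha}\rangle=t^{|\mu|_1}$ do not produce a summable series via \eqref{Generate}, because $\varphi_\mu^{\alpha}$ carries the normalizing factors $(2\mu_j!/\Gamma(\mu_j+\alpha_j+1))^{1/2}$. The paper inserts the extra weights $\tilde{L}_{\mu}^{\alpha}(0)$, which is exactly what makes the sum over $|\mu|_1=n$ collapse, via \eqref{4.6}, to a \emph{single one-dimensional} Laguerre polynomial $L_n^{|\alpha|_1+d-1}(|x|^2)$ times $\prod_i x_i^{\alpha_i+1/2}$. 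This collapse is what makes the $d$-dimensional problem tractable: the $L^p$ norm of $g_k$ is then a one-variable stationary-phase computation (\eqref{4.14}--\eqref{4.15}), and the measure lower bound \eqref{L4.19} follows from the one-dimensional oscillatory asymptotics \eqref{L1.2} applied with parameter $a=|\alpha|_1+d-1$. Your alternative of a genuine product-form $f_R=\prod_j g_j(x_j)$ would require assembling $d$ independent one-dimensional transition-region estimates and verifying that their intersection still has measure $\gtrsim 1$ in the shell $\mathbb{E}$, which is considerably more delicate and is not carried out. As written, the proposal is a plausible plan whose two hardest steps (the projection reduction and the correct radial collapse of the test function) are missing or left unresolved.
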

To prove Proposition \ref{P4.1}, we construct a sequence of functions that behave as if they were the eigenfunctions of $\mathcal{L}^{\alpha}$ on the set $\mathbb{E}=\{x\in \mathbb{R}_{+}^d: 1\leq\arrowvert x\arrowvert\leq 2\}$. For this purpose, we will utilizing the generating formula \eqref{Generate} of the Laguerre polynomials. For every one-dimensional Laguerre polynomial $L_{n}^{\alpha}$, we define
\begin{equation*}
\tilde{L}_{n}^{\alpha}(x)=L_{n}^{\alpha}(x)/\sqrt{L_{n}^{\alpha}(0)},
\end{equation*}
where 
\begin{equation*}
L_{n}^{\alpha}(0)=\frac{\Gamma(n+\alpha+1)}{\Gamma(n+1)\Gamma(\alpha+1)}.
\end{equation*}
 For a multi-index  $\alpha=(\alpha_1,\alpha_2,\cdot\cdot\cdot,\alpha_d)$, $\alpha_i>-1$, the multiple Laguerre polynomials are defined by the formula
 \begin{equation*}
 \tilde{L}_{\mu}^{\alpha}(x)=\prod_{i=1}^{d} \tilde{L}_{\mu_i}^{\alpha_i}(x_i), \quad \mu\in \mathbb{N}^d.
 \end{equation*}
 The generating formula \eqref{Generate} of the Laguerre polynomials can be written as
 \begin{equation*}
 \sum_{n=0}^{\infty}\tilde{L}_{n}^{\alpha}(0)\tilde{L}_{n}^{\alpha}(x)\omega^n=(1-\omega)^{-\alpha-1}e^{-x\omega/(1-\omega)}, \quad \arrowvert \omega\arrowvert<1.
 \end{equation*}
 Multiplying the above formula leads to (see also \cite{XY}),
 \begin{equation}\label{4.6}
 \sum_{n=0}^{\infty}\sum_{\arrowvert \mu\arrowvert_1=n}\tilde{L}_{\mu}^{\alpha}(0)\tilde{L}_{\mu}^{\alpha}(x)\omega^n=(1-\omega)^{-\arrowvert\alpha\arrowvert_1-d}e^{-\arrowvert x\arrowvert_1 \omega/(1-\omega)}, \quad \arrowvert \omega\arrowvert<1,
 \end{equation}
 which implies that $\sum_{\arrowvert \mu\arrowvert_1=n}\tilde{L}_{\mu}^{\alpha}(0)\tilde{L}_{\mu}^{\alpha}(x)=L_n^{\arrowvert\alpha\arrowvert_1+d-1}(\arrowvert x\arrowvert_1)$ (see also, for example, \cite[Proof of Theorem 2.1]{XY}).
For every $n\in\mathbb{N}$, we define function
\begin{equation*}
G_{n}(x)=\sum_{\arrowvert \mu\arrowvert_1=n}\varphi_{\mu}^{\alpha}(x)\tilde{L}_{\mu}^{\alpha}(0).
\end{equation*}
From \eqref{L1} and \eqref{Ld}, we obtain
\begin{equation}\label{4.8}
\begin{aligned}
G_{n}(x)&=\left(e^{-\frac{\arrowvert x\arrowvert^2}{2}}\prod_{i=1}^{d}\sqrt{\frac{2}{\Gamma(\alpha_i+1)}}x_i^{\alpha_i+1/2}\right)\sum_{\arrowvert \mu\arrowvert=n}\tilde{L}_{\mu}^{\alpha}(x^2)\tilde{L}_{\mu}^{\alpha}(0)\\
&=e^{-\frac{\arrowvert x\arrowvert^2}{2}}\prod_{i=1}^{d}\sqrt{\frac{2}{\Gamma(\alpha_i+1)}}x_i^{\alpha_i+1/2}L_n^{\arrowvert\alpha\arrowvert_1+d-1}(\arrowvert x\arrowvert^2),
\end{aligned}
\end{equation}
where $x^2=(x_1^2,\cdot\cdot\cdot,x_d^2)$.
Putting \eqref{4.8} into \eqref{4.6}, we obtain
\begin{equation*}
\sum_{n=0}^{\infty}G_n(x)\omega^n= e^{-\frac{\arrowvert x\arrowvert^2}{2}}(1-\omega)^{-\arrowvert\alpha\arrowvert_1-d}e^{-\arrowvert x\arrowvert^2 \omega/(1-\omega)}\prod_{i=1}^{d}\sqrt{\frac{2}{\Gamma(\alpha_i+1)}}x_i^{\alpha_i+1/2}.
\end{equation*}
For $0\leq r<1$, and $t\in\mathbb{R}$, we set $\omega=re^{-it}$, and define function $G_{r}(t,x)$ by the series
\begin{equation}\label{4.9}
G_{r}(t,x)=\sum_{n=0}^{\infty}G_n(x)r^ne^{-nit}=e^{-\frac{\arrowvert x\arrowvert^2}{2}}(1-re^{-it})^{-\arrowvert\alpha\arrowvert_1-d}e^{-\arrowvert x\arrowvert^2 re^{-it}/(1-re^{-it})}\prod_{i=1}^{d}\sqrt{\frac{2}{\Gamma(\alpha_i+1)}}x_i^{\alpha_i+1/2}.
\end{equation}
 It is straightforward to see that the limit of $G_{r}(t,x)$ as $r$ tends to 1 exists whenever $\sin \frac{t}{2}$ is different from zero, we denote the limit by $G(t,x)$ (The derivation of this function follows the same procedure as that of the kernel of the Schrödinger propagator $e^{it\mathcal{L}^{\alpha}}$, see, for example, \cite{TS,TS4}) and we have
\begin{equation}\label{4.13}
G(t,x)=\left(\frac{e^{it/2}}{2i\sin \frac{t}{2}}\right)^{\arrowvert \alpha\arrowvert_1+d}e^{\frac{i}{2}\arrowvert x\arrowvert^2\cot \frac{t}{2}}\prod_{i=1}^{d}\sqrt{\frac{2}{\Gamma(\alpha_i+1)}}x_i^{\alpha_i+\frac{1}{2}}.
\end{equation}
Using \eqref{4.13}, we have following result.
\begin{lemma}\label{Lemma4.2}
Let $p>2d/(d-1)$. Then, there are sequences $\{\mu_k\}\subset \mathbb{N}$ and $\{f_k\}\subset\mathcal{S}(\mathbb{R}^{d})$ such that 
\begin{equation*}
\mu_k\sim 2^{2^{k}},\quad\lVert f_k\rVert_{L^p(\mathbb{R}_{+}^d)}=1,
\end{equation*}
and the following holds for a large constant $k_0$:
\begin{equation}\label{L4.11}
\left\arrowvert\left\{x\in \mathbb{E}:\arrowvert P_{\mu_k}f_k(x)\arrowvert\geq C_0\mu_k^{\lambda(p)/2}\right\}\right\arrowvert\geq C_0
\end{equation}
for a constant $C_0>0$ if $k\geq k_0$, and for any $N>0$ there is a constant $C_N>0$ such that
\begin{equation}\label{L4.12}
\arrowvert P_{\mu_k}f_j(x)\arrowvert\leq C_N\mu_k^{\lambda(p)/2}(\mu_j/\mu_k)^{\frac{1}{2}-\frac{\arrowvert\alpha\arrowvert}{2}-\frac{d}{4}-\frac{d}{2p}}\arrowvert \mu_k-\mu_j\arrowvert^{-N}, \quad x\in\mathbb{E}
\end{equation}
whenever $j\neq k\geq k_0$.
\end{lemma}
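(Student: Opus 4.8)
The plan is to realize each $f_k$ as a suitably normalized piece of the generating function $G_r(t,x)$ evaluated at a fixed small parameter. Recall from \eqref{4.9} and \eqref{4.13} that $G(t,x)=\lim_{r\to1}G_r(t,x)$ exists for $\sin(t/2)\neq0$ and carries the oscillatory factor $e^{\frac{i}{2}|x|^2\cot(t/2)}$ together with the algebraic factor $\prod_i x_i^{\alpha_i+1/2}$. On the annulus $\mathbb{E}=\{1\leq|x|\leq2\}$, this function is smooth, bounded, and bounded below in absolute value, so for a fixed choice of $t_0$ (with $\sin(t_0/2)\neq0$ and $t_0$ close to $0$, to be optimized) the function $x\mapsto G(t_0,x)\chi(x)$, cut off by a smooth bump $\chi$ supported near $\mathbb{E}$, lies in $\mathcal{S}(\mathbb{R}_+^d)$ and has $L^p$-norm comparable to a constant. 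First I would fix the scale parameters $\mu_k\sim 2^{2^k}$ (the double-exponential spacing guarantees $|\mu_k-\mu_j|\geq c\max(\mu_k,\mu_j)$ for $j\neq k$, which is what makes \eqref{L4.12} decay) and, for each $k$, set $t_k\sim\mu_k^{-1}$ so that the oscillation of $G(t_k,x)$ matches the frequency $\mu_k$; then $f_k$ is $G(t_k,\cdot)$ times a fixed cutoff, renormalized in $L^p$.

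The heart of the matter is computing $P_{\mu_k}f_j$. The key algebraic identity, already recorded in the excerpt, is
\[
\sum_{|\mu|_1=n}\tilde L_\mu^\alpha(0)\tilde L_\mu^\alpha(x)=L_n^{|\alpha|_1+d-1}(|x|_1),
\]
which collapses the $d$-dimensional spectral sum into a one-dimensional Laguerre polynomial in $|x|^2$ (after the substitution $x\mapsto x^2$). Consequently, testing $P_{\mu_k}$ against the generating-function building blocks $G_n$ reduces the spectral projection to an explicit one-variable integral: $\langle G_r(t,\cdot)\,\chi,\varphi_\mu^\alpha\rangle$ is, up to the fixed algebraic weights, a moment of $L_n^{|\alpha|_1+d-1}$ against a Gaussian-type profile, and summing over $|\mu|_1=\mu_k$ produces $P_{\mu_k}f_j(x)$ as a contour integral in $\omega=re^{-it}$ of the closed-form expression in \eqref{4.9}. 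I would then estimate this $\omega$-integral by the standard stationary-phase / steepest-descent analysis that is used to derive the Mehler-type kernel of $e^{it\mathcal{L}^\alpha}$ (as the excerpt notes, the computation parallels \cite{TS,TS4}): the critical point in $\omega$ lives near $\omega=1$, and pushing the contour there converts the sum into $G(t,x)$-type data. The size $\mu_k^{\lambda(p)/2}$ in \eqref{L4.11} emerges from the $L^p\to L^\infty$ normalization: $\|f_k\|_p=1$ forces the pointwise amplitude on $\mathbb{E}$ to be of order $\mu_k^{d(1/2-1/p)/2-1/4}=\mu_k^{\lambda(p)/2}$ in the range $p>2d/(d-1)$ where $\lambda(p)=d(1/2-1/p)-1/2>0$, while the phase $e^{\frac{i}{2}|x|^2\cot(t_k/2)}$ survives under $P_{\mu_k}$ because the frequency has been tuned to $\mu_k$.

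For the off-diagonal bound \eqref{L4.12}, the mechanism is almost-orthogonality in the frequency variable: $P_{\mu_k}$ applied to $f_j$ picks out the $\omega$-Fourier coefficient at index $\mu_k$ of a function whose natural frequency support is concentrated near $\mu_j$. Since $f_j$ is built from $G(t_j,\cdot)$ with $t_j\sim\mu_j^{-1}$, its expansion coefficients $\langle f_j,\varphi_\mu^\alpha\rangle$ decay rapidly once $|\mu|_1$ leaves a neighborhood of $\mu_j$ of relative width $O(1)$; integrating by parts $N$ times in the $\omega$-contour integral (equivalently, exploiting the smoothness of the symbol away from its stationary point) yields the factor $|\mu_k-\mu_j|^{-N}$. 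The remaining power $(\mu_j/\mu_k)^{1/2-|\alpha|/2-d/4-d/2p}$ is simply the ratio of the two normalization constants (the $L^p$-normalizations of $f_j$ and $f_k$ have different powers of $\mu_j$ and $\mu_k$, respectively) together with the $\mu_k^{\lambda(p)/2}$ amplitude factored out front. I expect the main obstacle to be the rigorous justification of the contour-shift/stationary-phase step: one must control the boundary and error terms in the passage $r\to1$ uniformly in $x\in\mathbb{E}$ and track the exact powers of $\mu_k$ and $\mu_j$ through the asymptotics — in particular verifying that the lower bound \eqref{L4.11} holds on a set of measure $\gtrsim1$ (not merely that the sup-norm is large), which requires knowing that the oscillatory factor $e^{\frac{i}{2}|x|^2\cot(t_k/2)}$ does not conspire with the amplitude to create excessive cancellation on $\mathbb{E}$. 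This can be handled by choosing $t_k$ so that $\cot(t_k/2)$ is an appropriate multiple of a fixed irrational-type constant, or by a pigeonhole/averaging argument over a short range of admissible $t_k$, exactly as in the model computation for the Hermite case.
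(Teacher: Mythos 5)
Your construction cannot yield \eqref{L4.11}, for a reason that is independent of any of the stationary-phase details: you take $f_k$ to be $G(t_k,\cdot)$ multiplied by a \emph{fixed cutoff supported near} $\mathbb{E}$ and normalized in $L^p$. Any such $f_k$ is supported in a fixed compact set $K$, so by H\"older $\lVert f_k\rVert_{L^2}\leq |K|^{1/2-1/p}\lVert f_k\rVert_{L^p}\lesssim 1$. Since $\mathcal{P}_{\mu_k}^{\alpha}$ is an orthogonal projection, $\lVert \mathcal{P}_{\mu_k}^{\alpha}f_k\rVert_{L^2}\lesssim 1$ as well, and hence by Chebyshev
\begin{equation*}
\bigl|\{x\in\mathbb{E}:|\mathcal{P}_{\mu_k}^{\alpha}f_k(x)|\geq C_0\mu_k^{\lambda(p)/2}\}\bigr|\lesssim \mu_k^{-\lambda(p)}\longrightarrow 0,
\end{equation*}
because $\lambda(p)>0$ for $p>2d/(d-1)$. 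So \eqref{L4.11} forces $\lVert f_k\rVert_{L^2}\gtrsim\mu_k^{\lambda(p)/2}\to\infty$ while $\lVert f_k\rVert_{L^p}=1$: the extremizers must spread over a region whose size grows with $\mu_k$. In the paper's proof the function $g_k$ lives essentially on $\{|x|\sim\sqrt{\mu_k}\}$ (that is where the stationary-phase lower bound \eqref{4.15} holds and where $\lVert g_k\rVert_{L^p}$ is computed), while its projection $\mathcal{P}_{\mu_k}^{\alpha}g_k=\hat{\phi}(0)G_{\mu_k}$ is evaluated on the fixed annulus $\mathbb{E}$ using the Laguerre asymptotics \eqref{L1.2}; the exponent $\lambda(p)/2$ is exactly the ratio of these two scales. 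Localizing $f_k$ near $\mathbb{E}$ destroys this mechanism.

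There is a second, related gap in your treatment of the spectral localization. The paper does not evaluate $G$ at a single time $t_k\sim\mu_k^{-1}$; it integrates $G(t,x)$ against $\phi(t)e^{-i\mu_k t}\,dt$ with $\phi$ supported on the \emph{fixed} interval $(2^{-3},2^{-1})$. This is what produces the identity $\langle g_k,\varphi_{\mu}^{\alpha}\rangle=\hat{\phi}(|\mu|_1-\mu_k)\tilde{L}_{\mu}^{\alpha}(0)$, hence $\mathcal{P}_{\mu_k}^{\alpha}g_j=\hat{\phi}(\mu_k-\mu_j)G_{\mu_k}$, and the off-diagonal bound \eqref{L4.12} is then just the rapid decay of $\hat{\phi}$. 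By contrast, the spectral coefficients of your single-time function $G(t_j,\cdot)$ are $e^{-i|\mu|_1 t_j}\tilde{L}_{\mu}^{\alpha}(0)$, which do not decay in $|\mu|_1$ at all (they grow), so the claim that the coefficients of $f_j$ ``decay rapidly once $|\mu|_1$ leaves a neighborhood of $\mu_j$'' is false; multiplying by a spatial cutoff does not repair this, and there is no contour integral in your definition of $f_k$ in which to integrate by parts $N$ times. If you want to salvage the argument you must reinstate the $t$-integration against $\phi(t)e^{-i\mu_k t}$ on a fixed time interval and drop the spatial cutoff, at which point you are reproducing the paper's construction.
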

\begin{proof}
Based the argument as \cite[Proof of Lemma 5.2]{JLR}. We take a sequence $\{\mu_k\}\subset \mathbb{N}$ such that $\mu_k\sim 2^{2^{k}}$. Set
\begin{equation*}
\begin{aligned}
g_k(x)&=\int\phi(t)\lim_{r\rightarrow 1}G_r(t,x)e^{-\mu_kti}dt\\
&=\left(\prod_{i=1}^{d}\sqrt{\frac{2}{\Gamma(\alpha_i+1)}}x_i^{\alpha_i+\frac{1}{2}}\right)\int\phi(t)\left(\frac{e^{it/2}}{2i\sin \frac{t}{2}}\right)^{\arrowvert \alpha\arrowvert+d}e^{i\left(\frac{i}{2}\arrowvert x\arrowvert^2\cot \frac{t}{2}-\mu_kt\right)}dt,\quad x\in \mathbb{R}_{+}^{d}.
\end{aligned}
\end{equation*}
where $\phi\in C_{c}^{\infty}((2^{-3},2^{-1}))$. Since the oscillatory integral 
\begin{equation*}
\int\phi(t)\left(\frac{e^{it/2}}{2i\sin \frac{t}{2}}\right)^{\arrowvert \alpha\arrowvert+d}e^{i\left(\frac{i}{2}\arrowvert x\arrowvert^2\cot \frac{t}{2}-\mu_kt\right)}dt
\end{equation*}
obeys the following estimates (See \cite[Proof of Lemma 5.2]{JLR} for the detail).
\begin{equation}\label{4.14}
\left\arrowvert\int\phi(t)\left(\frac{e^{it/2}}{2i\sin \frac{t}{2}}\right)^{\arrowvert \alpha\arrowvert+d}e^{i\left(\frac{i}{2}\arrowvert x\arrowvert^2\cot \frac{t}{2}-\mu_kt\right)}dt\right\arrowvert\leq\left\{\begin{array}{lr}
C\mu_k^{-1/2},\quad\qquad\quad\quad\quad\qquad\, \,\,\,if\,\,\arrowvert x\arrowvert^2\sim\mu_k,\\
C_N(1+\max\{\arrowvert x\arrowvert^2,\mu_k\})^{-N}, \quad\, otherwise
\end{array}
\right.
\end{equation}
Moreover, the stationary phase method gives (See \cite[Proof of Lemma 5.2]{JLR})
\begin{equation}\label{4.15}
\left\arrowvert\int\phi(t)\left(\frac{e^{it/2}}{2i\sin \frac{t}{2}}\right)^{\arrowvert \alpha\arrowvert+d}e^{i\left(\frac{i}{2}\arrowvert x\arrowvert^2\cot \frac{t}{2}-\mu_kt\right)}dt\right\arrowvert\sim\mu_k^{-1/2}, \,\, if\,\,\arrowvert x\arrowvert^2\sim\mu_k.
\end{equation}
Thus, for $k\geq k_0$ large enough, using the polar coordinate, \eqref{4.14} and \eqref{4.15} gives
\begin{equation}\label{L4.15}
\lVert g_k\rVert_{L^p(\mathbb{R}_+^{d})}\sim_p\mu_k^{(\frac{\arrowvert\alpha\arrowvert_1}{2}+\frac{d}{4})+\frac{d}{2p}-\frac{1}{2}}.
\end{equation}
Note that $P_{\mu_k}g_j(x)=\hat{\phi}(\mu_k-\mu_j)G_{\mu_k}(x)$. Indeed, since $\phi\in C_{c}^{\infty}((2^{-3},2^{-1}))$ and $\arrowvert e^{-\arrowvert x\arrowvert^2 re^{-2it}/(1-re^{-2it})}\arrowvert\leq C$, for $1/2\leq r\leq1$ (see, for example \cite[Proof of Lemma 3.1]{TS1}). Then, by the dominated convergence theorem we have
\begin{equation*}
g_k(x)=\lim_{r\rightarrow 1}\int\phi(t)G_r(t,x)e^{-\mu_kti}dt.
\end{equation*}
Thus, for every eigenfunction $\varphi_{\mu}^{\alpha}(x)$, we have
\begin{equation}\label{4.17}
\begin{aligned}
\langle g_k(x), \varphi_{\mu}^{\alpha}\rangle&=\langle\lim_{r\rightarrow 1}\int\phi(t)G_r(t,x)e^{\mu_kti}dt, \varphi_{\mu}^{\alpha}\rangle\\&
=\lim_{r\rightarrow 1}\langle\int\phi(t)G_r(t,x)e^{\mu_kti}dt, \varphi_{\mu}^{\alpha}\rangle\\
&=\lim_{r\rightarrow 1}\int\phi(t)\langle G_r(t,x),\varphi_{\mu}^{\alpha} \rangle e^{\mu_kti}dt\\
&=\lim_{r\rightarrow 1}\int\phi(t)\tilde{L}_{\mu}^{\alpha}(0)r^{\arrowvert \mu\arrowvert_1}e^{-\arrowvert\mu\arrowvert_1 it}e^{\mu_kti}dt\\
&=\hat{\phi}(\arrowvert\mu\arrowvert_1-\mu_k)\tilde{L}_{\mu}^{\alpha}(0).
\end{aligned}
\end{equation}
The second equality follows from the dominated convergence theorem. In the third equality, we use the Fubini's theorem. The fourth equality follows from the expansion \eqref{4.9}. Clearly, \eqref{4.17} implies that $P_{\mu_k}g_j(x)=\hat{\phi}(\mu_k-\mu_j)G_{\mu_k}(x)$.
Recall \eqref{4.8} and \eqref{Define}, we obtain that
\begin{equation*}
G_{\mu_k}(x)=\sqrt{\frac{\Gamma(\mu_k+\arrowvert \alpha\arrowvert_1+d)}{\Gamma(\mu_k+1)}}\arrowvert x\arrowvert^{1-\arrowvert \alpha\arrowvert_1-d}\mathscr{L}_{\mu_k}^{\arrowvert\alpha\arrowvert_1+d-1}(\arrowvert x\arrowvert^2)\prod_{i=1}^{d}\sqrt{\frac{2}{\Gamma(\alpha_i+1)}}x_i^{\alpha_i+1/2}.
\end{equation*}
Using \eqref{L1.2} for $a=\arrowvert\alpha\arrowvert_1+d-1$, we see that
\begin{equation}\label{L4.19}
\left\arrowvert\left\{x\in \mathbb{E}: \arrowvert G_{\mu_k}(x)\arrowvert\sim\mu_k^{\frac{\arrowvert\alpha\arrowvert_1+d}{2}-\frac{3}{4}}\right\}\right\arrowvert\geq C_0
\end{equation}
for a constant $C_0>0$ and for $k\geq k_0$ large enough (see, for example, \cite[Proof of Lemma 4.9]{CD}). We set
\begin{equation*}
f_k=g_k/\lVert g_k\lVert_{L^p(\mathbb{R}_+^d)}.
\end{equation*}
It remains to verify \eqref{L4.11} and \eqref{L4.12}. In fact, \eqref{L4.11} follows form \eqref{L4.15} and \eqref{L4.19} since we can take $\hat{\phi}(0)>0$ and $P_{\mu_k}f_j(x)=\hat{\phi}(\mu_k-\mu_j)G_{\mu_k}(x)/\lVert g_j\lVert_{L^p(\mathbb{R}_+^d)}$. Using \eqref{L1.2} for $a=\arrowvert\alpha\arrowvert_1+d-1$, we see that $\arrowvert G_{\mu_k}(x)\arrowvert \lesssim \mu_k^{\frac{\arrowvert\alpha\arrowvert_1+d}{2}-\frac{3}{4}}$ if $\arrowvert x\arrowvert\sim 1$. Combing this and \eqref{L4.15}, we have
\begin{equation*}
\arrowvert P_{\mu_k}f_j(x)\arrowvert\lesssim \mu_{k}^{\lambda(p)/2}(\mu_j/\mu_k)^{\frac{1}{2}-(\frac{\arrowvert\alpha\arrowvert_1}{2}-\frac{d}{4})-\frac{d}{2p}}\arrowvert\hat{\phi}(\mu_k-\mu_j)\arrowvert, \quad \arrowvert x\arrowvert\sim 1
\end{equation*}
for k$,j$ large enough. By rapid decay of $\hat{\phi}$, this give \eqref{L4.12}.
\end{proof}
With Lemma \ref{Lemma4.2} in hands, we now prove Proposition \ref{P4.1}.\\
\\
\textbf{Proof of Proposition \ref{P4.1}.} Let $\{\mu_k\}$ and $\{f_k\}$ be the sequences given in Lemma \ref{Lemma4.2}. We consider $f=\sum_{k=k_0}^{\infty}2^{-k}f_k$ and 
\begin{equation*}
E_k=\left\{x\in \mathbb{E}:\arrowvert S_{*}^{\lambda}(\mathcal{L}^{\alpha})f(x)\arrowvert\geq c2^{-k}\mu_k^{-\lambda+\lambda(p)/2}\right\},\quad k\geq k_0
\end{equation*}
for a small positive constant $c$ to be chosen later. Since $\mu_k\sim 2^{2^{k}}$ and $\lambda< \lambda(p)/2$, then $E_k$ is a decreasing sequence of measurable sets which converges to $E:=\left\{x\in \mathbb{E}: S_{*}^{\lambda}(\mathcal{L}^{\alpha})f(x)=\infty\right\}$. Thus, to prove \eqref{Fail}, it is therefore sufficient to show
\begin{equation}\label{pp4.1}
\arrowvert E_k\arrowvert\geq C_0
\end{equation}
for a constant $C_0>0$ if $k\geq k_0$. However, the inequality \eqref{pp4.1} is an easy consequence of \eqref{L4.11} and the inclusion relation
\begin{equation}\label{pp5.1}
\tilde{E}_{k}:=\left\{x\in \mathbb{E}:\arrowvert P_{\mu_k}f_k(x)\arrowvert\geq C_0\mu_k^{\lambda(p)/2}\right\}\subset E_k.
\end{equation}
Hence, it is enough to show \eqref{pp5.1}. To this end, we invoke the inequality
\begin{equation}\label{pp6.1}
P_{\mu_k}f_k(x)\leq C_*\mu_k^{\lambda}S_{*}^{\lambda}(\mathcal{L}^{\alpha})f(x), \quad k\geq k_0,
\end{equation}
which holds with a constant $C_*$. See \cite[5.5]{JLR} for the detail. By \eqref{pp6.1}, we obtain that
\begin{equation}\label{pp7.1}
S_{*}^{\lambda}(\mathcal{L}^{\alpha})f(x)\geq C^{-1}\mu_k^{-\lambda}\left(2^{-k}\arrowvert P_{\mu_k}f_k(x)\arrowvert-\sum_{j\neq k}2^{-j}\arrowvert P_{\mu_k}f_j(x)\arrowvert\right).
\end{equation}
Since $\mu_k\sim 2^{2^{k}}$, using \eqref{L4.12} with sufficiently large $N$, we see that $\sum_{j\neq k}2^{-j}\arrowvert P_{\mu_k}f_j(x)\arrowvert$ is bounded above by a constant times
\begin{equation*}
\mu^{\lambda(p)/2}\sum_{j\neq k}2^{-j}(\mu_j/\mu_k)^{\frac{1}{2}-\frac{\arrowvert\alpha\arrowvert}{2}-\frac{d}{4}-\frac{d}{2p}}\arrowvert \mu_k-\mu_j\arrowvert^{-N}\lesssim \mu^{\lambda(p)/2}\mu_k^{-1}.
\end{equation*}
We choose a constant $c$ such that $c<C_0/2C_*$. Using \eqref{pp7.1}, for $x\in \tilde{E}_k$ we have $S_{*}^{\lambda}(\mathcal{L}^{\alpha})f(x)\geq c2^{-k}\mu_k^{-\lambda+\lambda(p)/2}$ if $k$ is large enough. Thus, \eqref{pp5.1} follows.$\hfill{\Box}$
\section*{Acknowledgments}
Z. Duan was supported by the National Natural Science Foundation of China under grants 61671009 and 12171178. 
\section*{Declarations}
\textbf{Conflict of interest} The authors declare that there is no conflict of interest.

\end{document}